\documentclass{amsart}

\usepackage[colorlinks=true, urlcolor=blue, citecolor=blue, linkcolor=blue, hypertexnames=false]{hyperref}
\usepackage{amssymb}
\usepackage{amsfonts}
\usepackage{amsmath}
\usepackage{amsthm}
\usepackage{mathrsfs}
\usepackage{dsfont}
\usepackage{mathtools}

\usepackage{graphicx}
\newcommand{\rotmodels}{\mathbin{\rotatebox[origin=c]{90}{$\models$}}}
\newcommand{\Bot}{\!\rotmodels\!}
\newcommand{\srotmodels}{\mathbin{\rotatebox[origin=c]{90}{\scalebox{.7}{$\models$}}}}
\newcommand{\sBot}{\srotmodels}

\usepackage{aliascnt}

\newtheorem{thm}{Theorem}[section]

\newaliascnt{lem}{thm}
\newtheorem{lem}[lem]{Lemma}
\aliascntresetthe{lem}

\newaliascnt{prp}{thm}  
\newtheorem{prp}[prp]{Proposition}
\aliascntresetthe{prp}

\newaliascnt{cor}{thm}  
\newtheorem{cor}[cor]{Corollary}
\aliascntresetthe{cor}

\theoremstyle{definition}

\newaliascnt{dfn}{thm}  
\newtheorem{dfn}[dfn]{Definition}
\aliascntresetthe{dfn}

\numberwithin{equation}{section}

\author{Tristan Bice}
\address{
Federal University of Santa Catarina\\
Florianopolis\\
Brazil
}
\email{Tristan.Bice@gmail.com}
\thanks{This research has been supported by a CAPES (Brazil) postdoctoral fellowship through the program ``Science without borders'', PVE project 085/2012.}
\keywords{Involution Semigroup, Annihilator, Hereditary Subalgebra, Projection, Ortholattice, C*-Algebra, Von Neumann Algebra, Murray-von Neumann Equivalence}
\subjclass[2010]{Primary: 20M10, 20M12, 46L85; Secondary: 46L05, 46L10, 46L45}


\begin{document}

\title{*-Annihilators in Proper *-Semigroups}

\begin{abstract}
We generalize some basic C*-algebra and von Neumann algebra theory on hereditary C*-subalgebras and projections.  In particular, we extend Murray-von Neumann equivalence from projections to *-annihilators and show that several of its important properties also extend from von Neumann algebras to proper *-semigroups.  Furthermore, we show how to obtain *-annihilator decompositions of a proper *-semigroup that correspond to classical von Neumann algebra type decompositions.
\end{abstract}

\maketitle

\section{Introduction}

\subsection{Motivation}

*-Annihilators underpin a lot of basic von Neumann algebra theory, although this is far from clear in most of the literature on the subject.  This is because *-annihilators correspond to projections and it is invariably the projections that people actually work with.  It is even tempting to think that this correspondence between *-annihilators and projections is crucial to the theory, and this is why various generalizations of von Neumann algebras, like AW*-algebras, Baer *-rings (see \cite{Berberian1972}) and complete Baer *-semigroups (see \cite{Foulis1960} or \cite{Harding2013}), take this correspondence as a fundamental assumption from the outset.

The purpose of the present paper is to show that this correspondence is not as vital as it seems, and that by working with *-annihilators directly we can still generalize much of the basic theory.  The only key assumption we need throughout is properness (see \autoref{propsemi}), which holds in a much broader class of algebraic structures, like the C*-algebras which originally motivated our work (see \cite{Bice2014}).  Indeed, a number of results about projections in von Neumann algebras have so far resisted generalization to C*-algebras, and the present paper would suggest the reason for this is simple \textendash\, they were really *-annihilator results in disguise.

\subsection{Outline}

We start off in \S\ref{NCT} by considering slightly more general subsets of a *-semigroup which correspond to open subsets of a topological space, just as *-annihilators correspond to regular open subsets.  Next we examine some basic properties of proper *-semigroups in \S\ref{P*} and then move on to the orthogonality relations that naturally give rise to *-annihilators in \S\ref{OR}.  We begin our investigation of the *-annihilator ortholattice $\mathscr{P}(S)^\perp$ in earnest in \S\ref{*AO}, showing that the *-annihilator ideals $\mathscr{P}(S)^\triangledown$ take the place of central projections in a von Neumann algebra, even yielding type decompositions in \autoref{typeI} and \autoref{typeI1} which correspond to parts of the classic von Neumann algebra type decomposition.

In \S\ref{eq} we introduce the $\sim$ relation on *-annihilators that naturally extends Murray-von Neumann equivalence of projections (see \autoref{MvN}).  Moreover we show that, as long as $\sim$ satisfies certain reflexivity and additivity assumptions, then $\sim$ retains many of its important properties even in the much more general context of proper *-semigroups.  For example, we show $\sim$ is weaker than perspectivity (see \autoref{simper}), satisfies an analog of the Cantor-Schroeder-Bernstein Theorem (see \autoref{CSB}), satisfies generalized comparability (see \autoref{gencom}), and also has properties sufficient to yield type decompositions (see \autoref{typeIII} and \autoref{typefin}) corresponding to the classic finite/properly infinite  and semifinite/purely infinite decompositions of von Neumann algebras.  Lastly, we point out in \S\ref{*R} that the reflexivity and additivity assumptions on which these results rely hold in certain proper *-rings, including all separable C*-algebras.  Indeed, the next natural step would be to move on to the C*-algebra case, where Hilbert space representations and the continuous functional calculus allow even more to be said about the *-annihilator ortholattice, as we demonstrate in a forthcoming paper based on \cite{Bice2014}.  

\section{Non-Commutative Topology}\label{NCT}

Say we are given a locally compact topological space $X$ which we want to analyze in a more algebraic way.  To do this we associate to $X$ a certain algebraic structure $A$ so that open subsets of $X$ correspond to certain subsets of $A$.  The standard way to do this is to let $A$ be the commutative C*-algebra $C_0(X,\mathbb{C})$ and then the open subsets of $X$ correspond to ideals of $A$ that are closed (w.r.t. uniform convergence).  In general possibly non-commutative C*-algebras, we can replace closed ideals with norm closed left (or right) ideals, hereditary *-algebras or hereditary cones.  While there is no concrete topological space $X$ associated to a non-commutative C*-algebra, we can still show that these non-commutative analogs of open subsets are in a natural bijective correspondence to each other (see \cite{Pedersen1979} Theorem 1.5.2).

We could also let $A$ be the unit ball of the above C*-algebra, more explicitly $C_0(X,\mathbb{C}^1)$, where $\mathbb{C}^1=\{z\in\mathbb{C}:|z|\leq1\}$.  Then $A$ is no longer a C*-algebra, as it is not closed under addition or scalar multiplication, however it is still a commutative *-semigroup.  Moreover, the open subsets of $X$ now correspond to the closed ideals of $A$ whose positive parts are closed under square roots and finite supremums.  The purpose of the present section is to show that in arbitrary *-semigroups we again get natural correspondences between various non-commutative analogs of open subsets, even without closure in norm or supremums.  To precisely describe these analogs and the correspondences between them we first need some definitions from \cite{CliffordPreston1961}.

\begin{dfn}
A \emph{semigroup} is a set $S$ together with a single associative binary operation $\cdot$ (usually abbreviated to juxtaposition).  We call $I\subseteq S$
\begin{itemize}
\item a \emph{subsemigroup} if $II\subseteq I$.
\item a \emph{left (right) ideal} if $SI\subseteq I$ ($IS\subseteq I$).
\item an \emph{ideal}, if $I$ is both a left and right ideal.
\item a \emph{quasi-ideal}, if $IS\cap SI\subseteq I$ and $I$ is a subsemigroup of $S$.
\item a \emph{bi-ideal}, if $ISI\subseteq I$ and $I$ is a subsemigroup of $S$.
\end{itemize}
\end{dfn}

If $S$ has a ring structure then we could also talk about ring ideals, although these are more restrictive as they must also be closed under addition (although *-annihilators in a *-ring are always closed under addition, see \autoref{Lbotprp}\eqref{Lbotprp6}).  Indeed, a union of semigroup ideals is again a semigroup ideal, while this is rarely the case for ring ideals.  For us, ideal always means semigroup ideal.

\begin{dfn}
A *-semigroup $S$ is a semigroup together with a function $^*:S\rightarrow S$ that is an involutive ($s^{**}=s$) antihomomorphism ($(st)^*=t^*s^*$).  Given $T\subseteq S$, we define $\sqrt{T}=\{s:s^*s\in T\}$, $T^2=\{t^*t:t\in T\}$, $T_+=T\cap S^2$ and call $T$
\begin{itemize}
\item \emph{self-adjoint} if $T=T^*$.
\item a \emph{*-subsemigroup} if $T$ is a self-adjoint subsemigroup.
\item \emph{left (right) rooted} if $\sqrt{T}\subseteq T$ ($\sqrt{T}^*\subseteq T$), for all $t\in T$.
\item \emph{rooted} if $T$ is both left and right rooted.
\item \emph{quasi-rooted} if $\sqrt{T}\cap\sqrt{T}^*\subseteq T$.
\item \emph{positive rooted} if $\sqrt{T}_+\subseteq T$
\item \emph{hereditary} if $t^*St\subseteq T$ whenever $t^*t\in T$.
\item \emph{positive hereditary} if $t^*S_+t\subseteq T$ whenever $t^*t\in T$.
\item \emph{bi-hereditary}, if $tS_+t\subseteq T$, whenever $t\in T_+$.
\end{itemize}
\end{dfn}

Note that, as $t^*tS_+t^*t\subseteq t^*S_+t$, any positive hereditary $T$ is bi-hereditary and clearly any bi-ideal is also bi-hereditary.  Also note that, for any $s=t^*t\in S_+$ and $n\in\mathbb{N}$, we have
\begin{equation}\label{s^n}
s^n\in S_+,
\end{equation}
for if $n$ is even then $s^n=s^{n/2*}s^{n/2}$, while if $n$ is odd then $s^n=(ts^{(n-1)/2})^*(ts^{(n-1)/2})$.

We first observe some basic properties of the maps $I\mapsto\sqrt{I}$ and $I\mapsto I_+$.

\begin{prp}\label{sqrtI}
If $S$ is a *-semigroup and $I\subseteq S$ then $\sqrt{I}$ is
\begin{itemize}
\item a left ideal, if $I$ is positive hereditary, and
\item left rooted, if $I$ is positive rooted.
\end{itemize}
\end{prp}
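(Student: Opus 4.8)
The plan is to verify both claims by a direct unwinding of the definitions; the only ingredients needed are the antihomomorphism law $(st)^* = t^*s^*$, the involution law $s^{**} = s$, and the trivial observation that $S^2 \subseteq S$, so that $S_+ = S \cap S^2 = S^2$.

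For the first bullet, assume $I$ is positive hereditary. To show $S\sqrt{I} \subseteq \sqrt{I}$ I would take arbitrary $s \in S$ and $t \in \sqrt{I}$, so that $t^*t \in I$, and compute $(st)^*(st) = t^*s^*st = t^*(s^*s)t$. Since $s^*s \in S^2 = S_+$ and $t^*t \in I$, positive hereditarity (applied with this $t$) yields $t^*(s^*s)t \in t^*S_+t \subseteq I$; that is, $(st)^*(st) \in I$, i.e. $st \in \sqrt{I}$.

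For the second bullet, assume $I$ is positive rooted, i.e. $\sqrt{I}_+ \subseteq I$; I must show $\sqrt{\sqrt{I}} \subseteq \sqrt{I}$. Let $s$ satisfy $s^*s \in \sqrt{I}$. Using $(s^*s)^* = s^*s$, this says $(s^*s)^*(s^*s) \in I$, which is precisely the statement $s^*s \in \sqrt{I}$; combined with the obvious $s^*s \in S^2$, we get $s^*s \in \sqrt{I} \cap S^2 = \sqrt{I}_+ \subseteq I$, hence $s \in \sqrt{I}$.

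I do not anticipate a genuine obstacle here, as both parts are short computations. The only points demanding care are the notational bookkeeping around the subscript $+$ \textendash\ specifically that $S_+$ coincides with $S^2$, that $\sqrt{I}_+ = \sqrt{I} \cap S^2$, and that $s^*s$ is automatically of the form $u^*u$ and hence lies in $S_+$ \textendash\ together with keeping track of exactly where each hypothesis is invoked. Once these are pinned down the conclusions are immediate; the right-handed analogues, if wanted, follow symmetrically.
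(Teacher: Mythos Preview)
Your proposal is correct and follows essentially the same approach as the paper: for the first bullet you both compute $(st)^*(st)=t^*(s^*s)t\in t^*S_+t\subseteq I$ via positive hereditarity, and for the second you both observe that $s^*s\in\sqrt{I}$ together with $s^*s\in S_+$ gives $s^*s\in\sqrt{I}_+\subseteq I$ via positive rootedness. The only difference is cosmetic: your middle sentence in the second bullet (``which is precisely the statement $s^*s\in\sqrt{I}$'') is a redundant restatement of the hypothesis rather than a new deduction, but the logic is sound.
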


\begin{proof}
If $t\in\sqrt{I}$ then $t^*t\in I$ so if $I$ is positive hereditary then $t^*s^*st\in I$, for any $s\in S$, which means $st\in\sqrt{I}$, i.e. $\sqrt{I}$ is a left ideal.  If $s^*s\in\sqrt{I}$ then $s^*ss^*s\in I$ which, if $I$ is positive rooted, means $s^*s\in I$ and hence $s\in\sqrt{I}$, i.e. $\sqrt{I}$ is left rooted.
\end{proof}

\begin{prp}
If $S$ is a *-semigroup and $I\subseteq S$ then $I_+$ is
\begin{itemize}
\item positive rooted, if $I$ is left rooted, and
\item positive hereditary and equal to $I^2$, if $I$ is a left rooted left ideal.
\end{itemize}
\end{prp}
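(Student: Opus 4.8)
The plan is to prove all three assertions by unwinding the definitions of $\sqrt{I}$, $I_+$ and $I^2$ together with the relevant rootedness and heredity conditions, and then chasing elements, exactly as in the proof of \autoref{sqrtI}. I will repeatedly use the trivial fact that $S_+=S\cap S^2=S^2$, since every element $t^*t$ already lies in $S$.

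For the first bullet, assuming $I$ is left rooted, I would take $s\in\sqrt{I_+}_+=\sqrt{I_+}\cap S^2$, so that $s^*s\in I_+\subseteq I$ and $s\in S^2$. Left rootedness of $I$ turns $s^*s\in I$ into $s\in I$, and hence $s\in I\cap S^2=I_+$; this is precisely $\sqrt{I_+}_+\subseteq I_+$, i.e.\ positive rootedness of $I_+$. For the second bullet I would add the hypothesis that $I$ is a left ideal and first establish $I_+=I^2$. If $t\in I$, then $t^*t\in SI\subseteq I$ and $t^*t\in S^2$, so $t^*t\in I_+$, giving $I^2\subseteq I_+$. Conversely, if $s\in I_+$ then $s=t^*t$ for some $t\in S$, and since $t^*t=s\in I$ left rootedness yields $t\in I$, so $s\in I^2$.

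It then remains to show $I_+$ is positive hereditary. Given $t^*t\in I_+\subseteq I$, left rootedness again gives $t\in I$; for any $r\in S$ the left ideal axiom gives $rt\in I$ and then $t^*r^*rt=(rt)^*(rt)\in SI\subseteq I$, while $(rt)^*(rt)\in S^2$ by definition, so $t^*r^*rt\in I_+$. Since every element of $S_+$ is of the form $r^*r$, this says exactly $t^*S_+t\subseteq I_+$.

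I do not anticipate a genuine obstacle here. The only point that needs care is that ``positive hereditary'' asks for $t^*S_+t\subseteq I_+$ rather than $t^*St\subseteq I_+$: it is this restriction to positive elements that makes membership in $S^2$ automatic, via the identity $t^*r^*rt=(rt)^*(rt)$, so that a single application of left rootedness together with the left ideal axiom suffices.
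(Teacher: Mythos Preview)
Your proof is correct and follows essentially the same approach as the paper's. The only stylistic difference is that the paper compresses your first bullet into the one-line identity $(\sqrt{I_+})_+=\sqrt{I}_+\subseteq I_+$ (using that $s^*s$ is automatically in $S^2$, so $\sqrt{I_+}=\sqrt{I}$), whereas you unpack this at the element level; the logical content is identical.
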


\begin{proof}
If $I$ is left rooted then $(\sqrt{I_+})_+=\sqrt{I}_+\subseteq I_+$, i.e. $I_+$ is positive rooted.  While if $t^*t\in I$ then $t\in I$ and $t^*t\in I^2$, as long as $I$ is left rooted, and hence $t^*S_+t\subseteq I_+$, as long as $I$ is also a left ideal, i.e. $I_+\subseteq I^2$ is positive hereditary.  Furthermore, $s\in I^2$ means $s=t^*t$ for some $t\in I$ and hence $s=t^*t\in I$, as long as $I$ is again a left ideal, so $I^2\subseteq I_+$.
\end{proof}

We can now prove these maps yield one of the desired correspondences.

\begin{thm}\label{correspondence1}
If $S$ is a *-semigroup then the maps $I\mapsto I_+=I^2$ and $I\mapsto\sqrt{I}$ are mutually inverse order isomorphisms between left rooted left ideals and positive rooted positive hereditary subsets of $S_+$.
\end{thm}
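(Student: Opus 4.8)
The plan is to check three things: that each map sends its source class into the claimed target class, that the two maps are mutually inverse, and that they are order isomorphisms --- the last being automatic once the first two hold, since both maps are manifestly inclusion-preserving.

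For well-definedness, one direction is immediate from the preceding proposition: if $I$ is a left rooted left ideal, then $I_+ = I^2$ is positive rooted, positive hereditary, and contained in $S_+$. The other direction is \autoref{sqrtI}: if $J$ is a positive rooted positive hereditary subset of $S_+$, then $\sqrt{J}$ is a left ideal (by positive heredity) and left rooted (by positive rootedness), i.e.\ a left rooted left ideal.

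Next come the two round-trip identities. For $\sqrt{I_+} = I$: since $s^*s \in S^2$ for every $s \in S$, we have $\sqrt{I_+} = \{s : s^*s \in I_+\} = \{s : s^*s \in I\} = \sqrt{I}$; left rootedness of $I$ gives $\sqrt{I} \subseteq I$, while $s \in I$ forces $s^*s \in SI \subseteq I$, so $s \in \sqrt{I}$ --- hence $\sqrt{I_+} = \sqrt{I} = I$. For $(\sqrt{J})_+ = J$: since $\sqrt{J}$ is a left rooted left ideal, the preceding proposition gives $(\sqrt{J})_+ = (\sqrt{J})^2 = \{s^*s : s \in \sqrt{J}\} = \{s^*s : s^*s \in J\} \subseteq J$; conversely, any $y \in J \subseteq S_+$ can be written $y = t^*t$, whence $t \in \sqrt{J}$ and $y = t^*t \in (\sqrt{J})^2$. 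Thus $(\sqrt{J})_+ = J$.

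Finally, both maps preserve inclusions ($I \mapsto I_+$ is intersection with $S^2$, and $J \subseteq J'$ clearly gives $\sqrt{J} \subseteq \sqrt{J'}$), so being mutually inverse bijections they are order isomorphisms. I do not expect a genuine obstacle here; the one place to tread carefully is the round-trip computations, since in each of $\sqrt{I_+} = I$ and $(\sqrt{J})_+ = J$ the two inclusions rest on different hypotheses --- rootedness for one direction, the ideal/hereditary property for the other --- so these should not be conflated. The rest is bookkeeping with the definitions and the two propositions already proved.
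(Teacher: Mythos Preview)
Your proposal is correct and follows essentially the same approach as the paper: cite the two preceding propositions for well-definedness, verify the two round-trip identities, and note that both maps are inclusion-preserving. The only minor difference is in the computation of $(\sqrt{J})_+ = J$: the paper argues each inclusion directly (using positive heredity to get $J\subseteq(\sqrt{J})_+$ via $s^2=t^*(tt^*)t\in J$, and positive rootedness for the reverse), whereas you route through $(\sqrt{J})_+=(\sqrt{J})^2$ from the proposition, which bundles both hypotheses together --- but this is a cosmetic variation, not a different method.
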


\begin{proof}
If $s\in I$ and $I$ is a left ideal then $s^*s\in I$ and hence $s\in\sqrt{I}=\sqrt{I_+}$, i.e. $I\subseteq\sqrt{I_+}$, while if $I$ is also left rooted then $\sqrt{I_+}=\sqrt{I}\subseteq I$.  Thus $I=\sqrt{I_+}$, for any left rooted left ideal $I$.  On the other hand if $s=t^*t\in I\subseteq S_+$ and $I$ is positive hereditary then $s^2=t^*(tt^*)t\in I$ so $s\in\sqrt{I}_+$, i.e. $I\subseteq\sqrt{I}_+$, while if $I$ is positive rooted then $\sqrt{I}_+\subseteq I$.  Thus $I=\sqrt{I}_+$, for any positive rooted positive hereditary $I$.  Now the result follows from the previous two propositions.
\end{proof}

Next we note some basic properties of the map $I\mapsto I\cap I^*$.

\begin{prp}\label{IcapI*}
If $S$ is a *-semigroup and $I\subseteq S$ then $I\cap I^*$ is self-adjoint and
\begin{itemize}
\item a quasi-ideal, if $I$ is a left ideal,
\item quasi-rooted, if $I$ is left rooted, and
\item hereditary, if $I$ is a left rooted left ideal.
\end{itemize}
\end{prp}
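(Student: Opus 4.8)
The plan is to check all four assertions straight from the definitions, leaning on two trivial but pervasive observations: that $(I\cap I^*)^*=I^*\cap I^{**}=I^*\cap I$, so $I\cap I^*$ is self-adjoint with no hypothesis on $I$ at all; and that $t^*t$ is always self-adjoint, so $t^*t\in I\cap I^*$ if and only if $t^*t\in I$. The second point is what will let me reduce the rootedness and hereditariness conditions for $I\cap I^*$ to conditions involving $I$ alone.

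For the quasi-ideal claim I would first show $I\cap I^*$ is a subsemigroup: given $a,b\in I\cap I^*$, we get $ab\in SI\subseteq I$ because $I$ is a left ideal, and $(ab)^*=b^*a^*\in SI\subseteq I$ because $a^*\in I$, hence $ab\in I\cap I^*$. Then for the quasi-ideal inclusion, take $x\in(I\cap I^*)S\cap S(I\cap I^*)$: writing $x=ub$ with $b\in I\cap I^*$ gives $x\in SI\subseteq I$, and writing $x=as$ with $a\in I\cap I^*$ gives $x^*=s^*a^*\in SI\subseteq I$ and hence $x\in I^*$, so $x\in I\cap I^*$.

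For quasi-rootedness, the key step is the identity $\sqrt{I\cap I^*}=\sqrt{I}$: indeed $s\in\sqrt{I\cap I^*}$ iff $s^*s\in I\cap I^*$ iff $s^*s\in I$ (using self-adjointness of $s^*s$) iff $s\in\sqrt{I}$. Thus $\sqrt{I\cap I^*}\cap\sqrt{I\cap I^*}^*=\sqrt{I}\cap\sqrt{I}^*$, and if $I$ is left rooted then any $s$ in this set satisfies both $s\in\sqrt{I}\subseteq I$ and $s^*\in\sqrt{I}\subseteq I$, whence $s\in I\cap I^*$; this is precisely quasi-rootedness. For hereditariness, suppose $t^*t\in I\cap I^*$, i.e.\ $t^*t\in I$. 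Left-rootedness upgrades this to $t\in\sqrt{I}\subseteq I$, and then, $I$ being a left ideal, $t^*st=(t^*s)t\in SI\subseteq I$ for every $s\in S$; replacing $s$ by $s^*$ shows $(t^*st)^*=t^*s^*t\in I$ as well, so $t^*st\in I\cap I^*$ and therefore $t^*St\subseteq I\cap I^*$.

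None of these steps presents a genuine obstacle; the only thing to be careful about is applying self-adjointness of $t^*t$ and $s^*s$ consistently to move between membership in $I$ and membership in $I\cap I^*$, together with the observation that left-rootedness is exactly what converts the hypothesis $t^*t\in I$ into $t\in I$, after which the left-ideal property does the rest.
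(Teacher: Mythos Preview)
Your proof is correct and follows essentially the same route as the paper's: the paper verifies the quasi-ideal property via $S(I\cap I^*)\cap(I\cap I^*)S\subseteq SI\cap I^*S\subseteq I\cap I^*$ (using that $I^*$ is a right ideal), and handles quasi-rootedness and hereditariness exactly as you do, by reducing membership of $s^*s$ or $t^*t$ in $I\cap I^*$ to membership in $I$ and then invoking left-rootedness and the left-ideal property. Your explicit identification $\sqrt{I\cap I^*}=\sqrt{I}$ is a nice way to package the key observation.
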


\begin{proof}
As $^*$ is involutive, $I\cap I^*$ is self-adjoint.  As $^*$ is an antihomomorphism, if $I$ is a subsemigroup of $S$ then so is $I^*$ and hence $I\cap I^*$.  In particular this applies if $I$ is a left ideal and then $I^*$ is a right ideal so $S(I\cap I^*)\cap(I\cap I^*)S\subseteq SI\cap I^*S\subseteq I\cap I^*$, i.e. $I\cap I^*$ is a quasi-ideal.  If $I$ is left rooted and $ss^*,s^*s\in I\cap I^*\subseteq I$ then $s\in I\cap I^*$, i.e. $I\cap I^*$ is quasi-rooted.  If $s\in S$ and $t^*t\in I\cap I^*$ then $t\in I$, if $I$ is left rooted, so $t^*st,t^*s^*t\in I$, if $I$ is also a left ideal, so $t^*st\in I\cap I^*$, i.e. $I\cap I^*$ is hereditary.
\end{proof}

We can now prove the other desired correspondence.

\begin{thm}\label{thecorrespondence}
If $S$ is a *-semigroup, the maps $I\mapsto I^*\cap I$ and $I\mapsto\sqrt{I}$ are isomorphisms between left rooted left ideals and quasi-rooted hereditary *-subsemigroups.
\end{thm}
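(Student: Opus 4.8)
The plan is as follows. Both maps $I\mapsto I^*\cap I$ and $I\mapsto\sqrt I$ are visibly inclusion‑preserving, so it suffices to check that each one lands in the stated class and that the two composites are the identity; being mutually inverse monotone bijections, they will then automatically be order isomorphisms (and, incidentally, each will be onto the other class). That $I\mapsto I^*\cap I$ sends a left rooted left ideal to a quasi‑rooted hereditary *-subsemigroup is exactly \autoref{IcapI*}, once one notes that ``self‑adjoint subsemigroup'' is the same as ``*-subsemigroup''. For the reverse direction, let $J$ be a quasi‑rooted hereditary *-subsemigroup. Since $S_+\subseteq S$, hereditariness trivially implies positive hereditariness, so \autoref{sqrtI} already gives that $\sqrt J$ is a left ideal. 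For left rootedness of $\sqrt J$ I would argue directly: if $s^*s\in\sqrt J$ then, writing $a=s^*s=a^*$, we have $a^*a=aa^*=s^*ss^*s\in J$, so quasi‑rootedness of $J$ applied to $a$ forces $a=s^*s\in J$, i.e.\ $s\in\sqrt J$.

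Next the two round trips. For a left rooted left ideal $I$: if $s\in\sqrt{I^*\cap I}$ then $s^*s\in I^*\cap I\subseteq I$, and left rootedness gives $s\in I$; conversely, if $s\in I$ then $s^*s\in SI\subseteq I$, and as $s^*s$ is self‑adjoint this also puts $s^*s$ in $I^*$, hence $s^*s\in I^*\cap I$ and $s\in\sqrt{I^*\cap I}$. Thus $\sqrt{I^*\cap I}=I$. For a quasi‑rooted hereditary *-subsemigroup $J$: a one‑line computation with the involution gives $\sqrt J^*=\{s:ss^*\in J\}$, so $\sqrt J^*\cap\sqrt J=\{s:s^*s\in J\text{ and }ss^*\in J\}$; this is contained in $J$ precisely by quasi‑rootedness, and it contains $J$ because $J$ is a self‑adjoint subsemigroup (so $s\in J$ yields $s^*\in J$ and then $s^*s,ss^*\in J$). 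Hence $\sqrt J^*\cap\sqrt J=J$, which also exhibits $J$ as lying in the range of $I\mapsto I^*\cap I$.

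There is no serious obstacle here; the only point worth flagging is the exact matching of hypotheses in the second half of the first paragraph — hereditariness of $J$ is what is needed (through positive hereditariness and \autoref{sqrtI}) to make $\sqrt J$ a \emph{left ideal}, while quasi‑rootedness of $J$ is what makes $\sqrt J$ \emph{left rooted}, and the latter needs nothing beyond quasi‑rootedness because $s^*s$ is self‑adjoint, so the defining condition for $s\in\sqrt{\sqrt J}$ coincides with the defining condition for $s^*s\in\sqrt J\cap\sqrt J^*$. Everything else is routine unwinding of the definitions of $\sqrt{\cdot}$, $(\cdot)^*$ and the ideal/rootedness conditions, entirely parallel to the proof of \autoref{correspondence1}.
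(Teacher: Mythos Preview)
Your proposal is correct and follows essentially the same route as the paper: the paper likewise verifies the two round trips $\sqrt{I\cap I^*}=I$ and $\sqrt J\cap\sqrt J^*=J$ directly and then invokes \autoref{sqrtI} and \autoref{IcapI*} for the well-definedness of the two maps. The only cosmetic difference is that where the paper cites \autoref{sqrtI} (which needs $J$ positive rooted) for left rootedness of $\sqrt J$, you unpack this from quasi-rootedness of $J$ directly---but this is the same computation, since quasi-rooted implies positive rooted via self-adjointness of $s^*s$.
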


\begin{proof}
As in the proof of \autoref{correspondence1} $I=\sqrt{I_+}=\sqrt{I\cap I^*}$, for any left rooted left ideal $I$.  On the other hand if $s\in I$ and $I$ is a *-subsemigroup of $S$ then $s^*\in I$ and $ss^*,s^*s\in I$ so $s\in\sqrt{I}\cap\sqrt{I}^*$, i.e. $I\subseteq\sqrt{I}\cap\sqrt{I}^*$, while if $I$ is quasi-rooted then $\sqrt{I}\cap\sqrt{I}^*\subseteq I$.  Now the result follows from \autoref{sqrtI} and \autoref{IcapI*}.
\end{proof}

\begin{cor}
Any quasi-rooted hereditary *-subsemigroup is a quasi-ideal.
\end{cor}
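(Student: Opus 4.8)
The plan is to read this off immediately from the correspondence just established, together with \autoref{IcapI*}. Let $T$ be a quasi-rooted hereditary *-subsemigroup of $S$. By \autoref{thecorrespondence}, the map $I\mapsto\sqrt{I}$ sends $T$ to a left rooted left ideal $\sqrt{T}$, and it is a left inverse of $I\mapsto I^*\cap I$ on this class, i.e. $T=(\sqrt{T})^*\cap\sqrt{T}$. Thus $T$ is precisely of the form $I^*\cap I=I\cap I^*$ for the left (rooted) left ideal $I=\sqrt{T}$.

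Now apply \autoref{IcapI*} with this $I$: since $I=\sqrt{T}$ is in particular a left ideal, the first bullet of \autoref{IcapI*} gives that $I\cap I^*$ is a quasi-ideal. But $I\cap I^*=T$, so $T$ is a quasi-ideal, as claimed.

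I do not expect any genuine obstacle here — the entire content lies in the two preceding results. The only point to be mildly careful about is invoking the round-trip identity $T=(\sqrt{T})^*\cap\sqrt{T}$ as part of what \autoref{thecorrespondence} already asserts (namely that $I\mapsto\sqrt{I}$ is a one-sided inverse of $I\mapsto I^*\cap I$ on quasi-rooted hereditary *-subsemigroups), rather than re-deriving it from quasi-rootedness and the *-subsemigroup property. One could equally well unwind everything by hand — showing $\sqrt{T}$ is a left ideal via \autoref{sqrtI} (using that hereditary implies positive hereditary, since $S_+\subseteq S$), then repeating the short computation $S(I\cap I^*)\cap(I\cap I^*)S\subseteq SI\cap I^*S\subseteq I\cap I^*$ — but routing through \autoref{thecorrespondence} and \autoref{IcapI*} keeps the argument to a single line.
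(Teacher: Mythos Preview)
Your proof is correct and essentially identical to the paper's: both use \autoref{thecorrespondence} to write the quasi-rooted hereditary *-subsemigroup as $\sqrt{T}\cap\sqrt{T}^*$ with $\sqrt{T}$ a left ideal, and then invoke \autoref{IcapI*} to conclude it is a quasi-ideal.
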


\begin{proof}
If $I$ is a quasi-rooted hereditary *-subsemigroup then, by \autoref{thecorrespondence}, $\sqrt{I}$ is a left ideal and $I=\sqrt{I}\cap\sqrt{I}^*$, which is a quasi-ideal by \autoref{IcapI*}.
\end{proof}

One useful thing to note about left rooted left ideals and quasi-rooted *-subsemigroups is that inclusion is completely determined by positive elements.

\begin{prp}\label{pospart}
If $I,J\subseteq S$ are left rooted left ideals, or quasi-rooted *-subsemigorups, \[I\subseteq J\quad\Leftrightarrow\quad I_+\subseteq J_+.\]
\end{prp}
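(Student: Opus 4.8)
The forward implication requires nothing: since $I_+=I\cap S^2$, the inclusion $I\subseteq J$ gives at once $I_+=I\cap S^2\subseteq J\cap S^2=J_+$. So the whole point is the reverse implication, and the plan is to show that, for an element $s$, membership $s\in J$ is already forced by having the positive element $s^*s$ (and, in the *-subsemigroup case, also $ss^*$) inside $J$ — and these positive elements land in $J$ because they lie in $I_+\subseteq J_+\subseteq J$.

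Concretely, assume $I_+\subseteq J_+$ and fix $s\in I$. First I would record the observation, uniform across the two classes, that $s\in I$ implies $s^*s\in I_+$: indeed $s^*s\in S^2$ always holds, and $s^*s\in I$ holds in both cases, since if $I$ is a left ideal then $s^*s\in SI\subseteq I$, while if $I$ is a self-adjoint subsemigroup then $s^*\in I$ and hence $s^*s\in II\subseteq I$. Consequently $s^*s\in I_+\subseteq J_+\subseteq J$, i.e.\ $s\in\sqrt{J}$. Now I split on the hypothesis carried by $J$. If $J$ is a left rooted left ideal, then $\sqrt{J}\subseteq J$ by definition of left rooted, so $s\in J$. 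If $J$ is a quasi-rooted *-subsemigroup, I apply the same observation to $s^*\in I$ (legitimate since $I$ is self-adjoint) to get $ss^*=(s^*)^*(s^*)\in J$, that is $s^*\in\sqrt{J}$, equivalently $s\in\sqrt{J}^*$; combined with $s\in\sqrt{J}$ and the quasi-rootedness inclusion $\sqrt{J}\cap\sqrt{J}^*\subseteq J$, this gives $s\in J$. Either way $I\subseteq J$, completing the proof.

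I do not expect a real obstacle: the argument is a short unwinding of the definitions of (left/quasi-)rooted, left ideal and *-subsemigroup. The only subtlety worth flagging is that in the *-subsemigroup case quasi-rootedness only bites once we know \emph{both} $s^*s$ and $ss^*$ lie in $J$, which is precisely why the argument is run on $s^*\in I$ as well as on $s\in I$, and why self-adjointness of $I$ is used there. For the left-ideal case one could instead simply quote \autoref{correspondence1}: $I\mapsto I_+$ is an order isomorphism on left rooted left ideals, and order isomorphisms both preserve and reflect $\subseteq$; but the direct argument above has the advantage of covering both classes simultaneously.
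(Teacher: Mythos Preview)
Your proof is correct and takes essentially the same approach as the paper's: both use that $s\in I$ forces $s^*s\in I_+$ (and also $ss^*\in I_+$ in the *-subsemigroup case), and then invoke (left/quasi-)rootedness of $J$ to recover $s\in J$. The only cosmetic difference is that the paper phrases the argument as the contrapositive (take $s\in I\setminus J$ and produce an element of $I_+\setminus J_+$), and it likewise notes the alternative via \autoref{correspondence1} for the left-ideal case.
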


\begin{proof}
The forward implication is immediate in either case.  Conversely, say we have $I,J\subseteq S$ and $s\in I\setminus J$.  If $I$ is a left ideal then $s^*s\in I_+$, and if $J$ is left rooted then $s^*s\in J_+$ would imply $s\in J$, a contradiction.  While if $I$ is a *-subsemigroup then $s^*\in I$ and $s^*s,ss^*\in I$, and if $J$ is quasi-rooted then $s^*s,ss^*\in J_+$ would imply that $s\in J$, a contradiction.  Thus in either case $I\nsubseteq J$ implies $I_+\nsubseteq J_+$ (alternatively, in the left rooted left ideal case, simply note that $I=\sqrt{I_+}$ and $J=\sqrt{J_+}$, by \autoref{correspondence1}).
\end{proof}

\section{Proper *-Semigroups}\label{P*}

While the subsets of a *-semigroup examined in the previous section are natural non-commutative analogs of open subsets, our primary interest actually lies in the analogs of regular open subsets (those open $O$ that are the interior of their closure $\overline{O}^\circ$).  To define these we need some $0$ in our *-semigroup with special properties which we now describe and investigate.

\begin{dfn}
If $S$ is a semigroup then we call $s\in S$
\begin{itemize}
\item \emph{idempotent} if $\{s\}$ is a subsemigroup.
\item \emph{(left/right/quasi-/bi-) absorbing} if $\{s\}$ is a (left/right/quasi-/bi-) ideal.
\end{itemize}
\end{dfn}

Clearly any left or right ideal is a subsemigroup, so in particular any absorbing element is idempotent.  And if $0_\mathrm{l}$ is left absorbing and $0_\mathrm{r}$ is right absorbing then $0_\mathrm{l}=0_\mathrm{r}0_\mathrm{l}=0_\mathrm{r}$, i.e. then there is unique (left/right) absorbing element.  Likewise, different quasi-absorbing elements can not commute.

\begin{dfn}
If $S$ is a *-semigroup, we call $p\in S$
\begin{itemize}
\item \emph{positive} if $p\in S_+$.
\item \emph{self-adjoint} if $\{p\}$ is self-adjoint.
\item a \emph{projection} if $\{p\}$ is a *-subsemigroup or, equivalently, if $\{p\}^2=\{p\}$.
\item \emph{(left/right/quasi-) proper} if $\{p\}$ is (left/right/quasi-) rooted.
\end{itemize}
\end{dfn}

Denoting the idempotent, self-adjoint, and projection elements of $S$ by $S_\mathrm{idem}$, $S_\mathrm{sa}$, and $S_\mathrm{proj}$ respectively, we have $S_+\subseteq S_\mathrm{sa}$ and $S_\mathrm{proj}=S_\mathrm{sa}\cap S_\mathrm{idem}=S_+\cap S_\mathrm{idem}$.

\begin{prp}
If $S$ is a *-semigroup and $0\in S$, the following are equivalent.
\begin{itemize}
\item $0$ is proper and absorbing.
\item $0$ is left proper and left absorbing.
\item $0$ is left proper, bi-absorbing and self-adjoint.
\end{itemize}
\end{prp}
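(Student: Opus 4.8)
The plan is to prove the cycle of implications (1)$\Rightarrow$(2)$\Rightarrow$(3)$\Rightarrow$(1), where (1), (2), (3) denote the three bullets in order. The first implication is immediate from unwinding the definitions: a rooted set is in particular left rooted and an ideal is in particular a left ideal, so if $0$ is proper and absorbing then $\{0\}$ is left rooted and a left ideal, i.e.\ $0$ is left proper and left absorbing.

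For (2)$\Rightarrow$(3) I would argue as follows. Assume $S0\subseteq\{0\}$ (left absorption) and that $s^*s=0$ implies $s=0$ (left rootedness). Bi-absorption is cheap: $0S0\subseteq S0\subseteq\{0\}$ and $0\cdot 0\in S0\subseteq\{0\}$, so $\{0\}$ is a subsemigroup with $0S0\subseteq\{0\}$, hence a bi-ideal. The one genuinely new assertion is that $0$ is self-adjoint, and this is where I would use the involution to ``reflect'' the one-sided hypothesis: applying $^*$ to $S0\subseteq\{0\}$ and using that $^*$ is an involutive antihomomorphism gives $0^*S=0^*S^*=(S0)^*\subseteq\{0\}^*=\{0^*\}$, so $0^*s=0^*$ for every $s\in S$. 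Taking $s=0$ and invoking left absorption once more yields $0^*=0^*\cdot 0=0$.

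For (3)$\Rightarrow$(1), assume $\{0\}$ is left rooted, a bi-ideal, and self-adjoint. Given $s\in S$, set $t=s0$ and compute $t^*t=0^*s^*s0=0\,s^*s\,0\in 0S0\subseteq\{0\}$ (using $0^*=0$ and bi-absorption); left rootedness then forces $t=0$, so $S0\subseteq\{0\}$. Applying $^*$ (again using $0^*=0$) gives $0S=(S0)^*\subseteq\{0\}$, so $\{0\}$ is an ideal and $0$ is absorbing. Finally, right rootedness of $\{0\}$ follows because $s^*s=0$ implies $s=0$ and hence $s^*=0^*=0$; together with the given left rootedness this makes $\{0\}$ rooted, so $0$ is proper, completing the cycle.

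I expect the only step requiring real thought to be the derivation of self-adjointness in (2)$\Rightarrow$(3). The subtlety is that the hypotheses there are purely one-sided, so one must pass through the involution to get any grip on $0^*$; once one notices that the adjoint of the left-absorption inclusion $S0\subseteq\{0\}$ is literally the statement $0^*S\subseteq\{0^*\}$, the rest collapses to evaluating at $s=0$. All the remaining implications are routine definition-chasing, with the bi-ideal bookkeeping in (2)$\Rightarrow$(3) and the ``$t^*t$ computation'' in (3)$\Rightarrow$(1) being the only manipulations of note.
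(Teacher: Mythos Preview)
Your proof is correct and follows essentially the same cycle as the paper. The one notable difference is in (2)$\Rightarrow$(3): the paper obtains self-adjointness more directly by observing that left absorption alone gives $0 = 0^*0 \in S_+ \subseteq S_\mathrm{sa}$, so the detour through $0^*S \subseteq \{0^*\}$ is unnecessary.
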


\begin{proof}
If $0$ is left absorbing then $0=0^*0\in A_+\subseteq A_\mathrm{sa}$.  If $0$ is bi-absorbing and self-adjoint then, for any $s\in S$, $0=0s^*s0=0^*s^*s0=(s0)^*s0$ which, if $0$ is also left proper, means $s0=0$, i.e. $0$ is left absorbing.  As $0$ is self-adjoint, it must then also be right proper and right absorbing.
\end{proof}

\autoref{propsemi} and \autoref{s*st} below are straightforward generalizations of \cite{Berberian1972} \S2 Definition 1 and Proposition 1 from *-rings to *-semigroups.

\begin{dfn}\label{propsemi}
A *-semigroup $S$ is \emph{proper} if there is a proper absorbing $0\in S$.
\end{dfn}

A *-semigroup $S$ with absorbing $0\in S$ is proper, as defined here, if it satisfies *-cancellation (see \cite{Foulis1963}), namely the property that $aa^*=ab^*=bb^*\Rightarrow a=b$, for all $a,b\in S$, a concept which some authors take as the definition of properness (see \cite{Drazin1978}).  In any case, properness is equivalent to *-cancellation in *-rings.

\begin{prp}\label{s*st}
If $S$ is a proper *-semigroup and $s,t\in S$ then \[st=0\quad\Leftrightarrow\quad s^*st=0.\]
\end{prp}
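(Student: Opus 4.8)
**Proof plan for Proposition \ref{s*st}.**

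The plan is to prove the two implications separately. The forward implication is the trivial one: if $st = 0$ then multiplying on the left by $s^*$ gives $s^*st = s^*0 = 0$, using that $0$ is a (left) absorbing element. So all the work is in the reverse direction, where I would assume $s^*st = 0$ and try to conclude $st = 0$.

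For the reverse implication, the natural strategy is to reduce to an application of properness, i.e. to the left-proper half of the hypothesis (that $0$ is left rooted as a singleton: $u^*u = 0 \Rightarrow u = 0$). So I would set $u = st$ and aim to show $u^*u = 0$, i.e. $(st)^*(st) = t^*s^*st = 0$. Now here is where the hypothesis $s^*st = 0$ comes in: since $s^*st = 0$ and $0$ is absorbing, multiplying on the left by $t^*$ gives $t^*(s^*st) = t^*0 = 0$, that is, $t^*s^*st = 0$. But $t^*s^*st = (st)^*(st) = u^*u$, so $u^*u = 0$, and then left-properness of $0$ yields $u = st = 0$.

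The main point — and the only thing that needs care — is which half of ``properness'' is actually invoked. I would make sure to phrase it in terms of $0$ being left proper (left rooted), since that is exactly the statement $u^*u = 0 \Rightarrow u = 0$; the absorbing property of $0$ is used twice along the way (once in each direction) to collapse expressions like $s^*0$ and $t^*0$ to $0$. No obstacle is expected here: the argument is a two-line chain of multiplications plus one appeal to left-properness, and it does not need the full strength of properness (nor self-adjointness of $0$).
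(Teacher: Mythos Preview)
Your proof is correct and matches the paper's argument essentially line for line: use absorbingness for the forward direction, and for the converse multiply $s^*st=0$ on the left by $t^*$ to get $(st)^*(st)=0$, then invoke (left-)properness. Your additional remark that only left-properness is needed is accurate and slightly sharper than the paper's phrasing.
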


\begin{proof}
If $st=0$ then $s^*st=0$, as $0$ is absorbing.  While if $s^*st=0$ then $(st)^*st=t^*s^*st=0$, as $0$ is absorbing, and hence $st=0$, as $0$ is proper.
\end{proof}

\begin{cor}\label{pq}
If $S$ is a proper *-semigroup, $s\in S$ and $p,q\in S_+$ then
\begin{eqnarray*}
p^2s=0 &\Leftrightarrow& ps=0,\\
pqps=0 &\Leftrightarrow& qps=0,\textrm{ and}\\
pqp=0 &\Leftrightarrow& qp=0
\end{eqnarray*}
\end{cor}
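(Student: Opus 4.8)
The plan is to reduce each of the three biconditionals to \autoref{s*st} by factoring the positive elements. Since $S_+\subseteq S_\mathrm{sa}$ we have $p^*=p$ and $q^*=q$, and since $S_+\subseteq S^2$ we may fix $v\in S$ with $q=v^*v$.

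For the first equivalence, write $p^2=pp=p^*p$; then $p^2s=0\Leftrightarrow ps=0$ is exactly \autoref{s*st} applied to the pair $(p,s)$ in place of $(s,t)$. For the second, use $p^*=p$ to compute $pqps=p\,v^*v\,ps=(vp)^*(vp)s$, so \autoref{s*st} applied to $(vp,s)$ gives $pqps=0\Leftrightarrow vps=0$; separately $qps=v^*v(ps)$, so \autoref{s*st} applied to $(v,ps)$ gives $qps=0\Leftrightarrow vps=0$, and chaining the two yields $pqps=0\Leftrightarrow qps=0$.

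The third equivalence runs the same way with the trailing factor absent, except that one direction now uses properness of $0$ directly rather than \autoref{s*st}: from $p^*=p$ we get $pqp=(vp)^*(vp)$, and since $0$ is absorbing $vp=0\Rightarrow pqp=0$, while since $0$ is proper $pqp=0\Rightarrow vp=0$; combined with \autoref{s*st} applied to $(v,p)$, which gives $qp=0\Leftrightarrow vp=0$, this yields $pqp=0\Leftrightarrow qp=0$.

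No step here is a genuine obstacle; the corollary is \autoref{s*st} with bookkeeping. The only point requiring a little care is the factorization identity $pqps=(vp)^*(vp)s$, which rests on $p^*=p$ and $(vp)^*=p^*v^*=pv^*$ together with $v^*v=q$; once that is in hand, each claimed equivalence is an immediate application of \autoref{s*st}, or in one instance of the properness and absorption of $0$. One could skip introducing $v$ in the first equivalence, but retaining it makes all three parts uniform.
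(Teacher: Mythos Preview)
Your proof is correct and follows essentially the same approach as the paper: factor $q=v^*v$ (the paper writes $r^*r$), use $p^*=p$ to rewrite $pqps=(vp)^*(vp)s$ and $pqp=(vp)^*(vp)$, and then apply \autoref{s*st} (or properness directly for the last line). The only cosmetic difference is that for the second equivalence you chain both directions through the intermediate condition $vps=0$ via two applications of \autoref{s*st}, whereas the paper does one direction by simply left-multiplying $rps=0$ by $r^*$; these are equivalent manoeuvres.
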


\begin{proof}
If $ps=0$ then, as $0$ is absorbing, $p^2s=0$.  Conversely, if $0=p^2s=p^*ps$ then $ps=0$, by \autoref{s*st}.  Likewise, $qps=0\Rightarrow pqps=0$ while $0=pqps=(rp)^*rps\Rightarrow0=rps\Rightarrow 0=r^*rps=qps$, where $r^*r=q$.  The last equation follows similarly (even without recourse to \autoref{s*st}, as we can use properness directly).
\end{proof}

By iterating \autoref{pq}, we see that any arbitrarily long multiple of $p,q\in S_+$ will be $0$ if and only if $pq=0$.  However, we can not quite do the same for triples $p,q,s\in S_+$, specifically $qp^ns=0$ does not necessarily imply that $qps=0$.  For example, if $S=M_2(\mathbb{Z})$, the (proper multiplicative *-semigroup of) $2\times2$ $\mathbb{Z}$-matrices, $q=\begin{bmatrix} 1 & 1 \\ 1 & 1 \end{bmatrix}^2$, $p=\begin{bmatrix} 1 & 0 \\ 0 & 2 \end{bmatrix}^2$ and $s=\begin{bmatrix} 16 & -4 \\ -4 & 1 \end{bmatrix}^2$ then $qp^ns=0$ iff $n=1$.

\section{Orthogonality Relations}\label{OR}

Our non-commutative analogs of regular open subsets are best derived as the closed subsets with respect to certain polarities, which in turn come from the following relations that we now define and examine.

\begin{dfn}
On a semigroup $S$ with $0\in S$ we define a binary relation $\triangledown$ by
\[s\triangledown t\quad\Leftrightarrow\quad \{0\}=sSt.\]
If $S$ is a *-semigroup we also define binary relations $\mathrm{L}$ and $\perp$ by
\begin{eqnarray*}
s\mathrm{L} t &\Leftrightarrow& 0=st^*\\
s\bot t &\Leftrightarrow& 0=st^*=st\\
\end{eqnarray*}
\end{dfn}

The basic relationships between these relations are as follows.

\begin{prp}\label{triequiv}
If $S$ is a *-semigroup, $0,s,t,u\in S$ and $0=0^*$ then \[s\triangledown t\ \Leftrightarrow\ t^*\triangledown s^*,\quad s\bot t\ \Leftrightarrow\ s\bot t^*\ \Rightarrow\ s\mathrm{L}t\ \Leftrightarrow\ t\mathrm{L}s,\quad\textrm{and}\quad s\mathrm{L}tu\ \Leftrightarrow\ su^*\mathrm{L}t.\]
If $S$ is also proper, i.e. if $0$ is proper and absorbing, then \[s^*s\triangledown t\quad\Leftrightarrow\quad s^*\triangledown t\quad\Leftrightarrow\quad s\triangledown t\quad\Leftrightarrow\quad t\triangledown s\quad\Rightarrow\quad s\bot t.\]
If $S$ is also commutative then \[s\triangledown t\quad\Leftrightarrow\quad s\bot t\quad\Leftrightarrow\quad s\mathrm{L}t\quad\Leftrightarrow\quad st=0.\]
\end{prp}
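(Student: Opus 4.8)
The plan is to verify each equivalence and implication by unwinding the definitions and invoking the properness results from \S\ref{P*}. I would organize the argument in three blocks, matching the three displayed lines of the statement.

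For the first line (only assuming $0=0^*$), the equivalence $s\triangledown t\Leftrightarrow t^*\triangledown s^*$ follows by applying $^*$ to the set equation $sSt=\{0\}$: since $(sSt)^*=t^*S^*s^*=t^*Ss^*$ and $\{0\}^*=\{0\}$, the two conditions are literally the same. For $s\bot t\Leftrightarrow s\bot t^*$, note $s\bot t$ means $st^*=0=st$, and $s\bot t^*$ means $st=0=st^*$ — the same pair of equations, so this is immediate; and either of these gives $st^*=0$, i.e.\ $s\mathrm{L}t$. The equivalence $s\mathrm{L}t\Leftrightarrow t\mathrm{L}s$ is obtained by taking adjoints of $st^*=0$ to get $ts^*=0$ (using $0=0^*$). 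Finally $s\mathrm{L}tu\Leftrightarrow su^*\mathrm{L}t$ unwinds to $s(tu)^*=0\Leftrightarrow (su^*)t^*=0$, and since $s(tu)^*=su^*t^*=(su^*)t^*$ by the antihomomorphism property, these coincide.

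For the second line I would assume $S$ is proper. The chain $s^*s\triangledown t\Leftrightarrow s^*\triangledown t\Leftrightarrow s\triangledown t$ is where the real content lies, and I expect this to be the main obstacle. Here I would use the characterization $su^*t=0\Leftrightarrow s^*su^*t=0$ and $uu^*t=0\Leftrightarrow u^*t=0$ type facts: writing $s\triangledown t$ as ``$sut=0$ for all $u\in S$'' (replacing $u$ by $u^*$ is harmless as $u$ ranges over all of $S$), properness via \autoref{s*st} and \autoref{pq} lets one move $s$, $s^*$, $s^*s$ past the quantifier freely. Concretely, $s^*\triangledown t$ says $s^*ut=0$ for all $u$; taking $u=s v$ gives $s^*svt=0$ for all $v$, so $s^*s\triangledown t$; conversely $s^*s\triangledown t$ with \autoref{s*st} applied to the product $s(vt)$ (noting $s^*s(vt)=0\Rightarrow s(vt)=0$) recovers $s\triangledown t$, and $s\triangledown t\Leftrightarrow t^*\triangledown s^*$ from the first line combined with the $s^*s$-equivalence closes the loop to also give $s^*\triangledown t$. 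Then $s\triangledown t\Leftrightarrow t\triangledown s$ follows since $s\triangledown t\Leftrightarrow t^*\triangledown s^*$ and the first two steps let us strip the adjoints (or: apply the already-proven equivalences to both $s$ and $t$). The implication $s\triangledown t\Rightarrow s\bot t$ is then easy: $s\triangledown t$ gives $st=0$ (take $u=0$, so $s0t=0$; better, note $sSt=\{0\}$ and $S$ absorbing forces... actually take any $u$ with a suitable value) — more directly, $st\in sSt$ would require an element of $S$ equal to... I would instead argue $s^*st=0$ from $s^*\triangledown t$ or directly: $s\triangledown t$ means $sut=0$ for all $u$; properness and \autoref{pq} then yield $st=0$ and $st^*=0$ similarly (using $t^*\triangledown s^*$), i.e.\ $s\bot t$.

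For the third line, assuming additionally that $S$ is commutative: $st=0$ clearly gives $s\mathrm{L}t$ up to replacing $t$ by $t^*$ (and in the commutative case $st^*=ts^*=$ etc.), and conversely $s\mathrm{L}t$ is $st^*=0$, whence by commutativity $t^*s=0$ and replacing names gives the symmetric statements; $s\bot t$ combines $st=0$ and $st^*=0$, each of which reduces to $st=0$ after a self-adjoint substitution; and $s\triangledown t$ means $sut=0$ for all $u$, which by commutativity is $ust=0$ for all $u$, so taking $u$ to be (say) the identity-like behavior is unavailable, but taking $u$ arbitrary and using that $st=0$ already implies $ust=0$ shows $st=0\Rightarrow s\triangledown t$, while $s\triangledown t\Rightarrow st=0$ needs one instance of $u$; since we are already in the proper case, $s\triangledown t\Rightarrow s\bot t\Rightarrow st=0$ handles this. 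Assembling these equivalences gives the last line. Throughout, the only genuinely delicate point is the adjoint-stripping in the proper case, where \autoref{s*st} and \autoref{pq} must be applied with the correct grouping of products.
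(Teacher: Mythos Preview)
Your proposal is correct and follows essentially the same approach as the paper: unwind the definitions for the first line, use \autoref{s*st} to strip adjoints for the second, and close the commutative circle via $s\mathrm{L}t\Rightarrow st=0\Rightarrow s\triangledown t$ for the third. The one place you wobble is the implication $s\triangledown t\Rightarrow s\bot t$: your ``take $u=0$'' attempt is vacuous since $S$ need not have a unit, and you never commit to specific instances. The paper simply plugs in $u=s^*$ (giving $ts^*s=0$, hence $ts^*=0$ by \autoref{s*st}) and $u=t^*s^*$ (giving $tt^*s^*s=0$, hence $t^*s^*=0$ after two applications of \autoref{s*st}), which yields $st^*=0=st$ directly.
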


\begin{proof}
The first statement is immediate.  For the second statement, assume $S$ is proper and $s^*St=\{0\}$.  Then $s^*sSt=\{0\}$ and hence $sSt=\{0\}$, by \autoref{s*st}, thus proving the first two equivalences.  The third equivalence follows from this and $s\triangledown t\Leftrightarrow t^*\triangledown s^*$.  For the last implication, note that if $t\triangledown s$ then $ts^*s=0$ and hence $ts^*=0=st^*$, by \autoref{s*st}, and also $tt^*s^*s=0$ and hence $t^*s^*=0=st$, again by \autoref{s*st}, i.e. $s\bot t$.  Finally, for the third statement, if $S$ is also commutative then $s\mathrm{L}t$ means $st^*=0=st^*t=stt^*$ so $st=0$, by \autoref{s*st}, and hence $sSt=stS=\{0\}$, i.e. $s\triangledown t$.
\end{proof}

In any commutative proper *-semigroup $S$ we have $ss^*=0\Leftrightarrow ss=0$, by the above result, and hence $S$ will contain no non-zero nilpotents.  While any commutative semigroup $S$ with no non-zero nilpotents becomes a proper *-semigroup when we define $s^*=s$, for all $s\in S$.  Thus any result proved about the relations $\mathrm{L}$, $\perp$ and $\triangledown$ in a proper *-semigroup is, in the commutative case,  really just a result about the relation $st=0$ in a commutative semigroup with no non-zero nilpotents.

If $I$ is a *-subsemigroup of a *-semigroup $S$ with common absorbing $0$, we write $\triangledown_I$ and $\bot_I$ for the corresponding relations defined within $I$.  We immediately see that $\bot_I$ is simply the restriction of $\bot$ to $I$ and with suitable assumptions the same is true of $\triangledown_I$.

\begin{prp}\label{triT}
If $I$ is a *-subsemigroup and bi-ideal of a proper *-semigroup $S$ and $s,t\in I$ then $s\triangledown_It$ if and only if $s\triangledown t$.
\end{prp}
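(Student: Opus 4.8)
\emph{Proof idea.} The plan is to reduce the claim to its ``positive part'' and then exploit the bi-ideal hypothesis to keep the relevant products inside $I$, finishing with the cancellation identities of \autoref{pq}. First observe that $I$ is itself a proper $*$-semigroup: it is a $*$-subsemigroup of $S$ containing the common absorbing $0$, and this $0$ is self-adjoint, absorbing in $I$ (as $I0\cup0I\subseteq S0\cup0S=\{0\}$) and proper in $I$ (as $a^*a=0\Rightarrow a=0$ already holds in $S$). Hence \autoref{triequiv} is available inside $I$ just as in $S$, and running its chain $a^*a\triangledown b\Leftrightarrow a^*\triangledown b\Leftrightarrow a\triangledown b\Leftrightarrow b\triangledown a$ a few times shows, for $\triangledown$ and for $\triangledown_I$ alike, that $s\triangledown t$ is equivalent to $p\triangledown q$, where $p:=s^*s$ and $q:=t^*t$. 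Since $I$ is a $*$-subsemigroup, $p,q\in I_+$, so the statement reduces to proving $pIq=\{0\}\iff pSq=\{0\}$.

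The implication $\Leftarrow$ is immediate, as $\emptyset\ne pIq\subseteq pSq=\{0\}$. For $\Rightarrow$, assume $pIq=\{0\}$ and fix $u\in S$; we must show $puq=0$. Here the bi-ideal hypothesis does the work: since $p,q\in I$ and $u\in S$ we have $puq\in ISI\subseteq I$, so substituting this element back gives $p(puq)q=p^2uq^2\in pIq=\{0\}$. Now peel off the extra factors using \autoref{pq}: from $p^2(uq^2)=0$ we get $puq^2=0$; taking the adjoint (valid since $p^*=p$ and $q^*=q$) gives $q^2(u^*p)=0$, whence $qu^*p=0$ by a second application of \autoref{pq}, and one more adjoint yields $puq=0$. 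Therefore $pSq=\{0\}$, i.e. $p\triangledown q$, which is what we needed.

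I do not expect a genuinely hard step. The two points that need a little care are: verifying that \autoref{triequiv} really transfers to $I$, so that the reduction to the positive elements $p$ and $q$ is legitimate on both sides; and noticing that while the leading factor $p^2$ is stripped off by \autoref{pq} directly, the trailing $q^2$ must be removed via the adjoint form of \autoref{pq} (which is why self-adjointness of $p$ and $q$ is used). Everything else is routine. One could instead work with $s$ and $t$ directly via \autoref{s*st}, but passing to $p$ and $q$ makes the one real idea — that the bi-ideal property keeps $puq$ inside $I$, so it can be sandwiched between $p$ and $q$ — stand out with the least bookkeeping.
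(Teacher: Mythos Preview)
Your proof is correct and follows essentially the same approach as the paper: both use the bi-ideal hypothesis to trap an $S$-product inside $I$ and then invoke the cancellation identities of proper *-semigroups (you via \autoref{pq}, the paper directly via \autoref{s*st}). The paper's version is a touch more compact, arguing by contrapositive with $s,t$ themselves---from $sut\neq0$ it gets $ss^*sutt^*t\neq0$ and notes $s^*sutt^*\in ISI\subseteq I$---rather than first reducing to $p=s^*s$ and $q=t^*t$, but the underlying idea is identical.
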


\begin{proof}
If $sSt\neq0$ then $sut\neq0$, for some $u\in S$, so $ss^*sutt^*t\neq0$, by \autoref{s*st}, and hence $sIt\neq0$, as $I$ is a bi-ideal.  Thus $s\triangledown t$ if (and only if) $s\triangledown_It$.
\end{proof}

Note that any binary relation $R$ gives rise to a polarity, i.e. an (inclusion) order reversing function $^R:\mathscr{P}(S)\rightarrow\mathscr{P}(S)$ defined by $T^R=\{s\in S:\forall t\in T(tRs)\}$.

\begin{prp}
If $S$ is a semigroup, $0$ is absorbing and $T\subseteq S$, $T^\triangledown$ is an ideal.
\end{prp}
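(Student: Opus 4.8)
The plan is to unwind the polarity and then verify the left- and right-ideal conditions by a direct computation, using nothing beyond associativity and the fact that $0$ is absorbing. First I would record that, by the definition of the polarity associated with $\triangledown$,
\[
T^\triangledown=\{s\in S:\forall t\in T\ (t\triangledown s)\}=\{s\in S:\forall t\in T\ (tSs=\{0\})\},
\]
so that $s\in T^\triangledown$ says precisely that $tSs=\{0\}$ for every $t\in T$. Since $0\in S$ we have $S\neq\emptyset$, so every set of the form $tSs$, $tS(sr)$ or $tS(rs)$ is nonempty; in particular, such a set equals $\{0\}$ as soon as it is contained in $\{0\}$.

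To see that $T^\triangledown$ is a right ideal, I would fix $s\in T^\triangledown$, $r\in S$ and $t\in T$, and compute $tS(sr)=(tSs)r$ by associativity; since $tSs=\{0\}$ and $0$ is absorbing (so $0r=0$), this gives $tS(sr)=\{0\}r=\{0\}$. As $t\in T$ was arbitrary, $sr\in T^\triangledown$, i.e. $T^\triangledown S\subseteq T^\triangledown$.

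To see that $T^\triangledown$ is a left ideal, I would again fix $s\in T^\triangledown$, $r\in S$ and $t\in T$, and compute $tS(rs)=t(Sr)s$ by associativity; since $Sr\subseteq S$ this yields $tS(rs)\subseteq tSs=\{0\}$, hence $tS(rs)=\{0\}$ by the nonemptiness remark above. Thus $rs\in T^\triangledown$, i.e. $ST^\triangledown\subseteq T^\triangledown$, and combining the two inclusions gives that $T^\triangledown$ is an ideal.

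I do not expect any genuine obstacle: each inclusion is a one-line manipulation of the defining equality $sSt=\{0\}$, pushing the extra factor $r$ either past the absorbing $0$ (on the right) or into the ambient $S$ (on the left). The only subtlety worth flagging is that $\triangledown$ is defined via the set equality $sSt=\{0\}$ rather than the inclusion $sSt\subseteq\{0\}$, so in the left-ideal step one must note that $tS(rs)$ is nonempty before passing from the inclusion to the equality; this is harmless since $0\in S$ forces $S\neq\emptyset$ (and in fact $0\in T^\triangledown$, as $S0=\{0\}$ gives $tS0=\{0\}$ for every $t$).
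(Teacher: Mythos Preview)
Your proof is correct and follows essentially the same approach as the paper's: the paper compresses your two computations into the set-level identities $TSST^\triangledown\subseteq TST^\triangledown=\{0\}$ (left ideal) and $TST^\triangledown S=\{0\}S=\{0\}$ (right ideal), which are exactly your element-wise arguments written globally. Your extra care about the equality-versus-inclusion point is harmless overkill, since $0$ absorbing already forces $0\in tSs$ for all $t,s$.
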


\begin{proof}
As $TSST^\triangledown\subseteq TST^\triangledown=\{0\}$, we have $ST^\triangledown\subseteq T^\triangledown$.  As $0$ is (right) absorbing, we have $TST^\triangledown S=\{0\}S=\{0\}$, i.e. $T^\triangledown S\subseteq T^\triangledown$ too.
\end{proof}

In a proper *-semigroup we also have the the following properties of $^\triangledown$, $^\bot$ and $^\mathrm{L}$.

\begin{lem}\label{Lbotprp}
If $S$ is a proper *-semigroup and $T\subseteq S$ then
\begin{enumerate}
\item\label{Lbotprp1} $T^\triangledown$ is rooted ideal (and hence self-adjoint).
\item\label{Lbotprp2} $T^\mathrm{L}=\sqrt{T^\bot}=T^{2\mathrm{L}}$ is a left rooted left ideal.
\item\label{Lbotprp3} $T^\bot=T^\mathrm{L}\cap T^{\mathrm{L}*}$ is a (self-adjoint) quasi-rooted hereditary quasi-ideal.
\item\label{Lbotprp4} $T^{\mathrm{L}\mathrm{L}}=T^{\bot\mathrm{L}}$.
\item\label{Lbotprp5} $T^\triangledown=T^\bot=T^\mathrm{L}$, if $T$ is a right ideal.
\item\label{Lbotprp6} $T^\triangledown$, $T^\bot$ and $T^\mathrm{L}$ are subrings, if $S$ is a *-ring.
\item\label{Lbotprp7} $T^\triangledown$, $T^\bot$ and $T^\mathrm{L}$ are closed, if $s\mapsto ts$ and $s\mapsto s^*$ are continuous.
\end{enumerate}
\end{lem}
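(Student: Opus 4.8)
The plan is to prove the seven items more or less in the order listed, since several later parts lean on earlier ones. For \eqref{Lbotprp1}, the ideal part is the previous proposition applied to $\triangledown$; for rootedness, if $s^*s\in T^\triangledown$ then $t(s^*s)u=0$ for all $t\in T,u\in S$, whence $ts\triangledown s^*u$ in a form that lets \autoref{s*st} (in the guise of \autoref{pq}) peel off the $s^*s$ and conclude $s\in T^\triangledown$; self-adjointness is then automatic from \autoref{triequiv} since $T^\triangledown$ is an ideal with $(T^\triangledown)^*=(T^*)^\triangledown$ but $s\triangledown t\Leftrightarrow t^*\triangledown s^*$, and for an ideal this collapses to self-adjointness once rootedness gives us $s^*s,ss^*\in T^\triangledown\Rightarrow s\in T^\triangledown$.

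For \eqref{Lbotprp2} I would first check $T^\mathrm{L}=\sqrt{T^\bot}$ directly from the definitions: $s\in T^\mathrm{L}$ means $ts^*=0$ for all $t\in T$, and $s\in\sqrt{T^\bot}$ means $s^*s\in T^\bot$, i.e. $t(s^*s)=0=t(s^*s)^*$; these are equated via \autoref{s*st} and \autoref{pq} (the identity $ts^*=0\Leftrightarrow t(s^*s)=0$ follows from $tss^*=0\Leftrightarrow ts^*=0$ applied with the roles arranged correctly, using $0=0^*$). The equality $T^\mathrm{L}=T^{2\mathrm{L}}$ says $ts^*=0$ for all $t\in T$ iff $t^*ts^*=0$ for all $t\in T$, which is again \autoref{s*st}. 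That $T^\mathrm{L}$ is a left ideal uses $s\mathrm{L}tu\Leftrightarrow su^*\mathrm{L}t$ from \autoref{triequiv} (so $s\in T^\mathrm{L}\Rightarrow us\in T^\mathrm{L}$, reading the relation from the correct side), and left-rootedness is \autoref{sqrtI} applied to the positive-hereditary set $T^\bot$, or can be read off from $T^\mathrm{L}=\sqrt{T^\bot}=T^{2\mathrm{L}}$. Item \eqref{Lbotprp3} is then almost formal: $s\in T^\bot$ iff $st=0=st^*$ iff $s\in T^\mathrm{L}$ and $s^*\in T^\mathrm{L}$ (using $s\mathrm{L}t\Leftrightarrow t\mathrm{L}s$ and the definitions), i.e. $T^\bot=T^\mathrm{L}\cap T^{\mathrm{L}*}$, and then \autoref{IcapI*} (with the corollary after \autoref{thecorrespondence}) delivers quasi-rooted, hereditary, and quasi-ideal. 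For \eqref{Lbotprp4}, the inclusion $T^{\bot\mathrm{L}}\subseteq T^{\mathrm{L}\mathrm{L}}$ is immediate from $T^\mathrm{L}\subseteq T^\bot$ being false in general — so instead I would argue $T^\mathrm{L}$ and $T^\bot$ have the same $\mathrm{L}$-polar because $T^\bot=\sqrt{T^\bot}_+$-style reasoning shows every element of $T^\bot$ is "$\mathrm{L}$-dominated" by elements of $T^\mathrm{L}$; concretely $x\in T^{\mathrm{L}\mathrm{L}}$ means $sx^*=0$ for every $s$ with ($s$ annihilating $T$ on the appropriate side), and one checks this forces $sx^*=0$ for every $s\in T^\bot$ too, using $T^\bot\subseteq T^\mathrm{L}$.

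Items \eqref{Lbotprp5}–\eqref{Lbotprp7} I would dispatch last. For \eqref{Lbotprp5}: if $T$ is a right ideal then $sTt=0$ for $t\in T^\triangledown$ is already implied by $st=0$ since $TS\subseteq T$ pushes $sSt\subseteq s\overline{T}t$; more precisely $T^\bot\subseteq T^\triangledown$ always would be false, so the right statement is that $s\mathrm{L}t$ for all $t\in T$ already gives $sSt=0$ because $St\subseteq T$... — here I must be careful about sides, and I expect the cleanest route is: for a right ideal $T$, $u\in T^\mathrm{L}$ iff $tu^*=0\ \forall t\in T$ iff (since $Tr\subseteq T$) $tru^*=0\ \forall t\in T,r\in S$ iff $u^*\triangledown t^*\ \forall t$, and by \autoref{triequiv} this is symmetric and equals $u\in T^\triangledown=T^\bot$. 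For \eqref{Lbotprp6}, in a *-ring each of $s\mapsto s+s'$ preserves the annihilation conditions $ts^*=0$, $ts=0$, $tSs=0$ since these are additive in $s$; combined with the ideal/left-ideal/subsemigroup properties already shown (and closure under $^*$), we get subrings. For \eqref{Lbotprp7}, the conditions defining membership are intersections over $t\in T$ of preimages of $\{0\}$ under the maps $s\mapsto ts$, $s\mapsto ts^*$ (a composition $s\mapsto s^*\mapsto ts^*$), which are continuous by hypothesis and $\{0\}$ is closed, so each $T^{?}$ is an intersection of closed sets.

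The main obstacle I anticipate is \eqref{Lbotprp4}: unlike the others it is not a direct bookkeeping consequence of \S\ref{NCT} plus \autoref{s*st}, and getting the direction $T^{\mathrm{L}\mathrm{L}}\subseteq T^{\bot\mathrm{L}}$ right requires genuinely using properness to show that an element $\mathrm{L}$-orthogonal to all of $T^\mathrm{L}$ is in fact $\bot$-orthogonal (or $\mathrm{L}$-orthogonal from both sides) to all of $T^\bot$ — essentially that passing from $\mathrm{L}$ to $\bot$ does not shrink the polar. I would look for an argument of the shape: given $x\in T^{\mathrm{L}\mathrm{L}}$ and $y\in T^\bot$, note $y^*\in T^\bot\subseteq T^\mathrm{L}$ (since $T^\bot$ is self-adjoint by \eqref{Lbotprp3}), hence $y^*x^*=0$, i.e. $xy=0$; applying the same to $y$ directly gives $yx^*=0$, i.e. $xy^*{}^*=0$ — assembling these yields $x\in T^{\bot\mathrm{L}}$, and the reverse inclusion is trivial from $T^\bot\supseteq T^\mathrm{L}$-polars ordering. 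If this self-adjointness shortcut works it is short; if not, one falls back on the $\sqrt{\phantom{x}}$-description of $T^\bot$ from \eqref{Lbotprp2} together with \autoref{pospart}.
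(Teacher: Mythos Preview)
Your treatment of items \eqref{Lbotprp1}--\eqref{Lbotprp3} and \eqref{Lbotprp5}--\eqref{Lbotprp7} is essentially the paper's argument and is correct. The only genuine problem is item \eqref{Lbotprp4}, where you have the two directions reversed.

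Since $\bot$ is a stronger relation than $\mathrm{L}$, one always has $T^\bot\subseteq T^\mathrm{L}$, and polars reverse inclusions, so $T^{\mathrm{L}\mathrm{L}}\subseteq T^{\bot\mathrm{L}}$ is the \emph{trivial} direction. Your ``self-adjointness shortcut'' (take $x\in T^{\mathrm{L}\mathrm{L}}$, $y\in T^\bot\subseteq T^\mathrm{L}$, conclude $yx^*=0$) is just a rederivation of this. You then assert that ``the reverse inclusion is trivial from $T^\bot\supseteq T^\mathrm{L}$-polars ordering,'' but $T^\bot\supseteq T^\mathrm{L}$ is \emph{false} in general (left annihilators need not be self-adjoint), so the inclusion $T^{\bot\mathrm{L}}\subseteq T^{\mathrm{L}\mathrm{L}}$ is not handled. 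This is precisely the direction that needs an idea.

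The paper's device, which your fallback gestures at but does not pin down, is to sandwich $T^\bot$ between $T^\mathrm{L}$ and its positive part: since positive elements are self-adjoint, $(T^\mathrm{L})_+\subseteq T^\mathrm{L}\cap T^{\mathrm{L}*}=T^\bot\subseteq T^\mathrm{L}$, whence
\[
T^{\mathrm{L}\mathrm{L}}\subseteq T^{\bot\mathrm{L}}\subseteq((T^\mathrm{L})_+)^\mathrm{L}=(T^\mathrm{L})^{2\mathrm{L}}=T^{\mathrm{L}\mathrm{L}},
\]
the last equality being your own $U^{2\mathrm{L}}=U^\mathrm{L}$ from \eqref{Lbotprp2} applied with $U=T^\mathrm{L}$ (together with $(T^\mathrm{L})^2=(T^\mathrm{L})_+$, which follows from $T^\mathrm{L}$ being a left-rooted left ideal). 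So the missing step is short, but it is not the step you wrote down.
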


\begin{proof}
By \autoref{triequiv}, $T^\triangledown$ is rooted.  If $st^*=0$ then $s(ut)^*=st^*u^*=0$, so $T^\mathrm{L}$ is a left ideal.  As $0=s(t^*t)^*=st^*t\Leftrightarrow st^*=0$, $T^\mathrm{L}=\sqrt{T^\bot}$, $T^\mathrm{L}$ is left rooted and $T^{\mathrm{L}2}=(T^\mathrm{L})_+$.  We also immediately have $T^\bot=T^\mathrm{L}\cap T^{\mathrm{L}*}$, so $T^\bot$ is a quasi-rooted hereditary quasi-ideal, by \autoref{IcapI*} (this can also be verified directly).  As $s^*st=0\Leftrightarrow st^*=0$, we have $T^\mathrm{L}=T^{2\mathrm{L}}$.  As $(T^\mathrm{L})_+\subseteq T^\mathrm{L}\cap T^{\mathrm{L}*}=T^\bot\subseteq T^\mathrm{L}$, we have \[T^{\mathrm{L}\mathrm{L}}\subseteq T^{\bot\mathrm{L}}\subseteq((T^\mathrm{L})_+)^\mathrm{L}=T^{\mathrm{L}2\mathrm{L}}=T^{\mathrm{L}\mathrm{L}}.\]
If $T$ is a right ideal and $t\in T^\mathrm{L}$ then $TSt^*\subseteq Tt^*=\{0\}$, i.e. $t^*\in T^\triangledown=T^{\triangledown*}$, so $T^\mathrm{L}\subseteq T^\triangledown\subseteq T^\bot\subseteq T^\mathrm{L}$.  If $S$ is a *-ring and $st^*=0=su^*$ then $s(t+u)^*=0$ so $T^\mathrm{L}$ and likewise $T^\triangledown$ and $T^\bot$ are closed under addition.  If $S$ has a topology in which the given maps are continuous, $t_\alpha\rightarrow t$ and $st_\alpha^*=0$, for all $\alpha$, then $st^*=0$ so $T^\mathrm{L}$ and, likewise, $T^\triangledown$ and $T^\bot$ are topologically closed.
\end{proof}

In the notation below, $\mathscr{P}(S)^\mathrm{R}=\{T^\mathrm{R}:T\subseteq S\}$.

\begin{thm}\label{Lperpequiv}
If $S$ is a proper *-semigroup then $\mathscr{P}(S)^\triangledown$, $\mathscr{P}(S)^\bot$ and $\mathscr{P}(S)^\mathrm{L}$ are complete ortholattices in which $\bigwedge\mathcal{T}=\bigcap\mathcal{T}$.  Moreover, $\mathscr{P}(S)^\mathrm{L}\cong\mathscr{P}(S)^\bot$ and
\[\mathscr{P}(S)^\triangledown=\mathscr{P}(S)^\mathrm{L}\cap\mathscr{P}(S)^\bot=\{T\in\mathscr{P}(S)^\mathrm{L}:T=T^*\}=\{T\in\mathscr{P}(S)^\bot:ST\subseteq T\}.\]
\end{thm}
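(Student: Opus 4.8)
The plan is to establish the chain of identifications one at a time, using the structural facts already collected in \autoref{Lbotprp} together with the two correspondence theorems. First I would deal with the ortholattice structure. By \autoref{Lbotprp} the sets in question are always of the form $T^\mathrm{R}$ for $\mathrm{R}\in\{\triangledown,\bot,\mathrm{L}\}$, and the general polarity formalism gives $T\subseteq T^{\mathrm{RR}}$, $T^\mathrm{R}=T^{\mathrm{RRR}}$, and order reversal; hence $\mathscr{P}(S)^\mathrm{R}$ is exactly the family of closed sets $\{T:T=T^{\mathrm{RR}}\}$, which is a complete lattice under inclusion with $\bigwedge\mathcal T=\bigcap\mathcal T$ (the intersection of closed sets is closed, being $\bigl(\bigcup_{T\in\mathcal T}T^\mathrm{R}\bigr)^\mathrm{R}$, an easy check) and $\bigvee\mathcal T=\bigl(\bigcup\mathcal T\bigr)^{\mathrm{RR}}$. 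For it to be an \emph{ortho}lattice I need the orthocomplementation $T\mapsto T^\mathrm{R}$ to satisfy $T\cap T^\mathrm{R}=\{0\}$ and $T\vee T^\mathrm{R}=S^{\mathrm{RR}}$ (the top element), the first of which follows from properness: $s\in T\cap T^\mathrm{R}$ forces $s\,\mathrm{R}\,s$, i.e. $ss^*=0$ (and $ss=0$ in the $\bot$ case), whence $s=0$ since $0$ is proper — and then $\{0\}^\mathrm{R}=S^{\mathrm{RR}}$ gives the join identity by De Morgan. I would spell this out once for a generic polarity satisfying $s\,\mathrm{R}\,s\Rightarrow s=0$ so as not to repeat it three times.

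Next, the isomorphism $\mathscr{P}(S)^\mathrm{L}\cong\mathscr{P}(S)^\bot$. Here I would invoke \autoref{Lbotprp}\eqref{Lbotprp2}--\eqref{Lbotprp4}: $T^\mathrm{L}$ is a left rooted left ideal, $T^\bot=T^\mathrm{L}\cap T^{\mathrm{L}*}$, and $T^{\mathrm{LL}}=T^{\bot\mathrm{L}}$. The claim is that the maps $I\mapsto I\cap I^*$ and $I\mapsto\sqrt I$ of \autoref{thecorrespondence}, restricted appropriately, carry $\mathscr{P}(S)^\mathrm{L}$ onto $\mathscr{P}(S)^\bot$ and back. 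Concretely, if $I=T^\mathrm{L}$ then $I$ is a left rooted left ideal (hence, by \autoref{thecorrespondence}, $I=\sqrt{I\cap I^*}$), and $I\cap I^*=T^\bot$ by \eqref{Lbotprp3}; conversely every element of $\mathscr{P}(S)^\bot$ has the form $T^\bot=I\cap I^*$ with $I=T^\mathrm{L}$. The only thing to verify is that these maps are mutually inverse \emph{as maps between the two lattices} and order-preserving, which is just \autoref{thecorrespondence} plus \autoref{pospart} (inclusion is detected on positive parts, and $T^\bot$ and $T^\mathrm{L}$ have the same positive part by \eqref{Lbotprp2}--\eqref{Lbotprp3}), so $T^\mathrm{L}\subseteq U^\mathrm{L}\Leftrightarrow T^\bot\subseteq U^\bot$.

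Finally the triple description of $\mathscr{P}(S)^\triangledown$. For $\mathscr{P}(S)^\triangledown=\{T\in\mathscr{P}(S)^\mathrm{L}:T=T^*\}$: the inclusion $\subseteq$ is \autoref{Lbotprp}\eqref{Lbotprp1} (a $\triangledown$-annihilator is self-adjoint) together with the observation that it is in particular $\mathrm{L}$-closed — indeed $T^\triangledown=T^{\triangledown\mathrm{L}\mathrm{L}}$ needs checking, which I would get from \eqref{Lbotprp5} applied to the right ideal $T^\triangledown$, giving $T^{\triangledown\triangledown}=T^{\triangledown\mathrm{L}}=T^{\triangledown\bot}$ and hence $T^\triangledown=T^{\triangledown\triangledown\triangledown}=(T^\triangledown)^{\mathrm{LL}}$ after one more application; for $\supseteq$, if $U=U^{\mathrm{LL}}$ is self-adjoint then $U=U^{\mathrm{LL}}=U^{\mathrm{LL}*}=(U^\mathrm{L}\cap U^{\mathrm{L}*})^\mathrm{L}=U^{\bot\mathrm{L}}$ and, since $U^\bot$ is self-adjoint, \autoref{triequiv} ($s\bot t\Leftrightarrow s\triangledown t$ on a self-adjoint set — more precisely $s\triangledown t\Rightarrow s\bot t$, with the converse on right ideals via \eqref{Lbotprp5}) lets me replace $\bot$ by $\triangledown$, exhibiting $U$ as a $\triangledown$-annihilator. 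For $\mathscr{P}(S)^\triangledown=\{T\in\mathscr{P}(S)^\bot:ST\subseteq T\}$: every $T^\triangledown$ is an ideal (shown just before \autoref{Lbotprp}), hence $ST^\triangledown\subseteq T^\triangledown$; conversely a $\bot$-closed set $U$ with $SU\subseteq U$ is automatically self-adjoint ($U=U^\bot$-type sets are... no — rather: $U$ closed under left multiplication and, being $\bot$-closed, a quasi-ideal, forces $U=U^*$ via quasi-rootedness and the left-ideal structure, which I would check directly) and then reduces to the previous case. The middle equality $\mathscr{P}(S)^\triangledown=\mathscr{P}(S)^\mathrm{L}\cap\mathscr{P}(S)^\bot$ then drops out: $\mathscr{P}(S)^\bot$ sits inside $\mathscr{P}(S)^\mathrm{L}$ via $T\mapsto\sqrt T$ only after identification, so I must be careful — the honest statement is that a set lies in all three families iff it equals its own $\triangledown$-double-annihilator, and I would prove it by showing $T\in\mathscr{P}(S)^\mathrm{L}\cap\mathscr{P}(S)^\bot$ means $T=T^{\mathrm{LL}}=T^{\bot\bot}$ simultaneously, forcing $T=T^*$ (since $T^{\bot\bot}$ is self-adjoint) and $ST\subseteq T$ (quasi-ideal + left rooted left ideal), landing in one of the already-proven descriptions.

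The main obstacle I expect is bookkeeping around the non-symmetry of $\mathrm{L}$ versus $\bot$ and $\triangledown$: the families $\mathscr{P}(S)^\mathrm{L}$ and $\mathscr{P}(S)^\bot$ consist of genuinely different kinds of sets (left ideals vs. quasi-ideals), so every ``equality'' in the displayed formula is really an equality of \emph{subfamilies} obtained after passing through the correspondence of \autoref{thecorrespondence}; stating precisely which sets are being compared, and checking that the relevant maps are inverse order-isomorphisms rather than merely bijections, is where the care is needed. The properness hypothesis enters only through \autoref{s*st}/\autoref{triequiv} and the ``$s\,\mathrm{R}\,s\Rightarrow s=0$'' fact, so once the lattice-theoretic scaffolding is in place the verifications are short.
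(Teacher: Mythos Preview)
Your outline is broadly on target and uses the same ingredients as the paper, but there is one genuine gap: the relation $\bot$ is \emph{not} symmetric (the paper notes this explicitly after the proof), so the blanket polarity argument you propose---in particular the claim $T\subseteq T^{\bot\bot}$ for arbitrary $T$---fails. From $s\in T$ and $u\in T^\bot$ you get $su^*=0=su$, hence $us^*=0$ and $u^*s^*=0$, but not $us=0$; so $T\not\subseteq T^{\bot\bot}$ in general. The paper avoids this by treating only $\mathrm{L}$ and $\triangledown$ via the symmetric-polarity/Birkhoff route, and then \emph{deducing} the ortholattice structure on $\mathscr{P}(S)^\bot$ from the isomorphism with $\mathscr{P}(S)^\mathrm{L}$. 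For that deduction one must check that $T\mapsto T\cap T^*$ carries $^\mathrm{L}$ to $^\bot$, not merely that it is an order isomorphism (an order isomorphism of ortholattices need not preserve the orthocomplement). The paper does this via the chain $T^\mathrm{L}=T^{\mathrm{LLL}}=T^{\mathrm{L}\bot\mathrm{L}}=(T\cap T^*)^\mathrm{L}$ for $T\in\mathscr{P}(S)^\mathrm{L}$, using \eqref{Lbotprp4}; your plan omits this verification.

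For the triple description of $\mathscr{P}(S)^\triangledown$ your route is more circuitous than necessary, and one of your claimed equivalences is not right: $s\bot t\Leftrightarrow s\triangledown t$ does not hold on arbitrary self-adjoint sets, only on right ideals (that is exactly \eqref{Lbotprp5}). The cleaner observation, which the paper uses, is that a self-adjoint $T\in\mathscr{P}(S)^\mathrm{L}$ is automatically a two-sided ideal (left ideal with $T=T^*$), and likewise a $T\in\mathscr{P}(S)^\bot$ with $ST\subseteq T$ is a self-adjoint left ideal, hence a right ideal; then \eqref{Lbotprp5} applied twice gives $T=T^{\mathrm{LL}}=T^{\triangledown\mathrm{L}}=T^{\triangledown\triangledown}$ (respectively $T=T^{\bot\bot}=T^{\triangledown\bot}=T^{\triangledown\triangledown}$) directly. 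Your argument for $\mathscr{P}(S)^\mathrm{L}\cap\mathscr{P}(S)^\bot\subseteq\mathscr{P}(S)^\triangledown$ in the last paragraph is fine and reduces to the same observation.
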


\begin{proof}
As $\mathrm{L}$ is symmetric and $s\bot s\Leftrightarrow s\in S^\mathrm{L}=\{0\}$, by properness, it follows immediately (see \cite{Birkhoff1967} Ch V \S7) that $\mathscr{P}(S)^\mathrm{L}$ is a complete ortholattice in which infimums are intersections, and likewise for $\triangledown$.  By \autoref{thecorrespondence} and \autoref{Lbotprp} $T\mapsto T\cap T^*$ and $T\mapsto\sqrt{T}$ are order isomorphisms between $\mathscr{P}(S)^\mathrm{L}$ and $\mathscr{P}(S)^\bot$.  For any $T\in\mathscr{P}(S)^\mathrm{L}$, \autoref{Lbotprp} also yields \[T^\mathrm{L}=T^{\mathrm{L}\mathrm{L}\mathrm{L}}=T^{\mathrm{L}\bot\mathrm{L}}=(T^{\mathrm{L}\mathrm{L}}\cap T^{\mathrm{L}\mathrm{L}*})^\mathrm{L}=(T\cap T^*)^\mathrm{L}\] so $T^\bot=T^\mathrm{L}\cap T^{\mathrm{L}*}=(T\cap T^*)^\bot$, i.e. $T\mapsto T\cap T^*$ also preserves the orthocomplementation and $\mathscr{P}(S)^\mathrm{L}\cong\mathscr{P}(S)^\bot$ which, in particular, means $\mathscr{P}(S)^\bot$ is also a complete ortholattice in which infimums are intersections.  By the last part of \autoref{Lbotprp}, $T=T^{\triangledown\triangledown}=T^{\triangledown\bot}=T^{\triangledown\mathrm{L}}$, for any $T\in\mathscr{P}(S)^\triangledown$, i.e. $\mathscr{P}(S)^\triangledown\subseteq\mathscr{P}(S)^\mathrm{L}\cap\mathscr{P}(S)^\bot$.  While if $T\in\mathscr{P}(S)^\mathrm{L}$ and $T=T^*$ then $T$ is a right ideal and hence $T=T^{\mathrm{L}\mathrm{L}}=T^{\triangledown\mathrm{L}}=T^{\triangledown\triangledown}\in\mathscr{P}(S)^\triangledown$.  And if $T\in\mathscr{P}(S)^\bot$ and $ST\subseteq T$ then $T$ is again a right ideal so $T=T^{\bot\bot}=T^{\triangledown\bot}=T^{\triangledown\triangledown}\in\mathscr{P}(S)^\triangledown$.
\end{proof}

While the infimums above are nicely described by intersections, supremums are not in general described by unions or any other simple set theoretic or algebraic operations.  Indeed, to prove anything about expressions involving supremums often involves turning them into infimums, i.e. using the fact that $\bigvee\mathcal{T}=(\bigcap\mathcal{T}^\perp)^\perp$.  For example, to prove that supremums in $\mathscr{P}(S)^\triangledown$ agree with those in $\mathscr{P}(S)^\perp$ and $\mathscr{P}(S)^\mathrm{L}$, we use the fact that $T^\triangledown=T^\perp=T^\mathrm{L}$, for any $T\in\mathscr{P}(S)^\triangledown$, by \autoref{Lbotprp}, and so, for any $\mathcal{T}\subseteq\mathscr{P}(S)^\triangledown$, we have
\begin{equation}
\bigvee_{\mathscr{P}(S)^\triangledown}\mathcal{T}=(\bigcap\mathcal{T}^\triangledown)^\triangledown=(\bigcap\mathcal{T}^\perp)^\perp=\bigvee_{\mathscr{P}(S)^\perp}\mathcal{T}.
\end{equation}

If $S$ is (the multiplicative proper *-semigroup of) a Baer *-ring, then the correspondence between elements of $\mathscr{P}(S)^\bot$ and $\mathscr{P}(S)^\mathrm{L}$ with projections in $S$ does at least give a simple algebraic expression for supremums of \emph{orthogonal} $T$ and $U$ in either of these ortholattices, namely $T\vee U=T+U$.  But even this may fail for more general *-rings.  For example, if $S$ is a commutative C*-algebra then $S=C_0(X)$ for some topological space $X$, and elements of $\mathscr{P}(S)^\bot=\mathscr{P}(S)^\mathrm{L}$ correspond to regular open subsets of $X$.  For orthogonal $T$ and $U$, $T+U$ corresponds to the union of the corresponding regular open sets, but this may not be regular, and $T\vee U$ actually corresponds to the interior of the closure of this union (which is strictly larger than the union when it is not regular).

We could also have defined a relation $\Bot$ on any proper *-semigroup $S$ by \[s\Bot t\quad\Leftrightarrow\quad 0=st^*=st=s^*t=s^*t^*.\]  Then $S^\bot=S^{\sBot}$ whenever $S=S^*$, so in particular $^\bot$ and $^{\sBot}$ coincide on $\mathscr{P}(S)^\bot=\mathscr{P}(S)^{\sBot}$.  But $\Bot$ is symmetric (unlike $\bot$) so $\mathscr{P}(S)^{\sBot}$ is a complete ortholattice in which infimums are intersections, and thus the same applies to $\mathscr{P}(S)^\bot$, yielding another proof of this result.  However, we have chosen to focus on $\perp$ as this makes it slightly easier to define the equivalence relation in \S\ref{eq}.  Also, we could have defined a relation $\mathrm{R}$ on $S$ by \[s\mathrm{R}t\quad\Leftrightarrow\quad 0=s^*t\] and then everything we proved about $\mathrm{L}$ would hold for $\mathrm{R}$ with `right' replacing `left'.  And finally note that $\triangledown$ and $\mathrm{L}$ are really just two special cases of a more general relation $\mathrm{L}_T$ that one could define for any $T\subseteq S^1$ ($S^1$ denoting the unitization of $S$) by \[s\mathrm{L}_T t\quad\Leftrightarrow\quad sTt^*=\{0\},\] specifically $\triangledown=\mathrm{L}_S$ and $\mathrm{L}=\mathrm{L}_{\{1\}}$.  In general, $\mathrm{L}_T$ is symmetric if $T$ is self-adjoint in which case $\mathscr{P}(S)^{\mathrm{L}_T}$ will again be a complete ortholattice in which infimums are intersections.

\section{The *-Annihilator Ortholattice}\label{*AO}

Let us make the following standing assumption throughout the rest of this paper:
\begin{center}
\textbf{$S$ is a proper *-semigroup.}
\end{center}
Here we continue our investigation of the polarities we have defined in order to reveal structural properties of the ortholattice $\mathscr{P}(S)^\perp$ of \emph{*-annihilators}.  As far as purely ortholattice theoretic properties go, we could equally well deal with the ortholattice $\mathscr{P}(S)^\mathrm{L}$ of \emph{left annihilators}, by \autoref{Lperpequiv}, and if we were studying a category of semigroups rather than *-semigroups, this is precisely what we would do.  However, for us it is more natural to deal with $\mathscr{P}(S)^\perp$, because every *-annihilator is itself a proper *-semigroup, while left annihilators are generally not self-adjoint.

\begin{thm}\label{centhm}
If $A$ is a bi-hereditary *-subsemigroup of $S$ and $I$ is an ideal of $S$, \[A^\mathrm{L}=(A\cap I)^\mathrm{L}\cap(A\cap I^\mathrm{L})^\mathrm{L}\quad\textrm{and}\quad A^\perp=(A\cap I)^\perp\cap(A\cap I^\perp)^\perp.\]
\end{thm}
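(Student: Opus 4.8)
The plan is to prove the identity for $^\mathrm{L}$ first and then deduce the identity for $^\perp$ by applying the left-annihilator case to $A\cap A^*$ and invoking the order isomorphism $T\mapsto T\cap T^*$ from \autoref{thecorrespondence} (together with \autoref{Lperpequiv}). For the $^\mathrm{L}$ statement, the inclusion $A^\mathrm{L}\subseteq(A\cap I)^\mathrm{L}\cap(A\cap I^\mathrm{L})^\mathrm{L}$ is immediate from the fact that $^\mathrm{L}$ is order reversing, since $A\cap I\subseteq A$ and $A\cap I^\mathrm{L}\subseteq A$. So the content is the reverse inclusion: if $s\in(A\cap I)^\mathrm{L}\cap(A\cap I^\mathrm{L})^\mathrm{L}$, I must show $as^*=0$ for every $a\in A$.

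The key idea is to split an arbitrary $a\in A$ according to $I$ and $I^\mathrm{L}$ using bi-heredity. Since $I$ is an ideal, for $a\in A_+$ we have $aIa\subseteq I$; and since $A$ is bi-hereditary, $aS_+a\subseteq A$, so elements of the form $aia$ with $i\in I_+$ lie in $A\cap I$. The plan is: given $a\in A_+$ and $s$ as above, first use $s\in(A\cap I)^\mathrm{L}$ to kill the ``$I$-part'' of $a$ against $s$, then use $s\in(A\cap I^\mathrm{L})^\mathrm{L}$ to handle the remainder, which should land in $A\cap I^\mathrm{L}$. Concretely, I expect to show that for $a\in A_+$, the element $a s^* $ (or rather something like $a$ acting appropriately) decomposes so that one summand is annihilated because a suitable product with $s$ lies in $I$ — more precisely, I will exploit that $a s^* a \in A$ by bi-heredity, test membership in $I$ versus $I^\mathrm{L}$, and use \autoref{s*st} and \autoref{pq} to pass between $as^*=0$ and $as^*a=0$ and between $a s^* a\in I$-type conditions and the corresponding annihilation. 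Since inclusion in both $\mathscr{P}(S)^\mathrm{L}$ and $\mathscr{P}(S)^\perp$ is determined by positive elements (\autoref{pospart}), it suffices throughout to treat $a\in A_+$.

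The main obstacle I anticipate is the ``decomposition'' step: unlike in a von Neumann algebra, where one literally writes $p = pz + p(1-z)$ with $z$ a central projection, here there is no addition available in a general $S$ and no ambient central projection — only the ideal $I$ and its annihilator $I^\mathrm{L}$. So the argument must be purely multiplicative: from $a\in A_+$ I will need to manufacture, using products with elements of $S$, a witness in $A\cap I$ and a witness in $A\cap I^\mathrm{L}$ whose simultaneous annihilation by $s$ forces $as^*=0$. The delicate point is verifying that the ``complementary'' witness genuinely lands in $A\cap I^\mathrm{L}$: I expect to argue that if $x\in S$ satisfies $ax \perp (A\cap I)$ appropriately, then $ax\in I^\mathrm{L}$ follows from $I$ being an ideal and $A$ bi-hereditary, after reducing $I$-annihilation to annihilation of positive generators via \autoref{s*st} and \autoref{pq}. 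Once the $^\mathrm{L}$ identity is established, the $^\perp$ identity follows formally: writing $B=A\cap A^*$, which is a quasi-rooted hereditary $*$-subsemigroup (hence bi-hereditary) with $B^\perp=A^\perp$, apply the $^\mathrm{L}$ result to $B$, intersect with adjoints, and use that $I^\perp$ and $(A\cap I)^\perp$ etc.\ are the images of the corresponding left annihilators under $T\mapsto T\cap T^*$, which preserves meets and orthocomplementation by \autoref{Lperpequiv}.
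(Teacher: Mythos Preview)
Your plan is correct and matches the paper's approach: the paper argues the $\mathrm{L}$-case by contrapositive, observing that if $as^*\neq0$ for some $a\in A_+$ then (since $I^\mathrm{L}\cap I^{\mathrm{LL}}=\{0\}$) there must exist $b\in I_+\cup I^\mathrm{L}_+$ with $abas^*\neq0$, and then $aba\in A\cap I$ or $aba\in A\cap I^\mathrm{L}$ by bi-heredity and the fact that $I$ and $I^\mathrm{L}=I^\triangledown$ are ideals. The $\perp$-case then follows exactly as you suggest, directly from $T^\perp=T^\mathrm{L}\cap T^{\mathrm{L}*}$ together with $I^\mathrm{L}=I^\perp$ (since $I$ is an ideal); note that $A\cap A^*=A$ already, as $A$ is a $*$-subsemigroup.
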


\begin{proof}
It suffices to show that \[S\setminus A^\mathrm{L}\subseteq S\setminus((A\cap I)^\mathrm{L}\cap(A\cap I^\mathrm{L})^\mathrm{L}).\]  Take $s\in S\setminus A^\mathrm{L}$, so $\exists a\in A_+$ such that $as^*\neq0$.  As $I^\mathrm{L}\cap I^{\mathrm{L}\mathrm{L}}=\{0\}$ we have $b\in I_+$ or $b\in I^\mathrm{L}_+$ with $abas^*\neq0$.  As $I$ and $I^\mathrm{L}=I^\triangledown$ are ideals, and $A$ is bi-hereditary, $aba\in A\cap I$ and hence $s\notin (A\cap I)^\mathrm{L}$, or $aba\in A\cap I^\mathrm{L}$ and hence $s\notin (A\cap I^\mathrm{L})^\mathrm{L}$.  In either case $s\notin(A\cap I)^\mathrm{L}\cap(A\cap I^\mathrm{L})^\mathrm{L}$.  This proves the first equation and the second now follows immediately from \autoref{Lbotprp}\eqref{Lbotprp3}.
\end{proof}

Recall that the \emph{centre} of an ortholattice $\mathbb{P}$ is the set of all $z\in\mathbb{P}$ such that $p=(p\wedge z)\vee(p\wedge z^\perp)$, for all $p\in\mathbb{P}$ (see \cite{MacLaren1964} \S3 or \cite{Kalmbach1983} \S3 Theorem 1).

\begin{cor}\label{annidealcentre}
The centre of $\mathscr{P}(S)^\perp$ contains $\mathscr{P}(S)^\triangledown$.
\end{cor}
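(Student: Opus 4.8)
\emph{Proof proposal.}
The plan is to read this off directly from \autoref{centhm}. Fix $Z\in\mathscr{P}(S)^\triangledown$ and an arbitrary $T\in\mathscr{P}(S)^\perp$; what has to be shown is that $T=(T\wedge Z)\vee(T\wedge Z^\perp)$, with meet and join computed in $\mathscr{P}(S)^\perp$. First I would line up the hypotheses of \autoref{centhm} with $A:=T$ and $I:=Z$. Writing $Z=W^\triangledown$, recall that every $T^\triangledown$ is an ideal of $S$, so $Z$ is an ideal of $S$; writing $T=U^\perp$, \autoref{Lbotprp}\eqref{Lbotprp3} shows $T$ is a self-adjoint hereditary quasi-ideal, hence a *-subsemigroup which is positive hereditary and therefore bi-hereditary by the remark following the definition of bi-hereditary. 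So \autoref{centhm} applies and yields $T^\perp=(T\cap Z)^\perp\cap(T\cap Z^\perp)^\perp$.

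The second step is to apply $^\perp$ to this identity. Since $\mathscr{P}(S)^\triangledown\subseteq\mathscr{P}(S)^\perp$ by \autoref{Lperpequiv}, both $Z$ and $Z^\perp$ lie in $\mathscr{P}(S)^\perp$, and since meets in $\mathscr{P}(S)^\perp$ are intersections (\autoref{Lperpequiv} again), the intersections $T\cap Z$ and $T\cap Z^\perp$ are exactly the meets $T\wedge Z$ and $T\wedge Z^\perp$ inside $\mathscr{P}(S)^\perp$; in particular each is fixed by $^{\perp\perp}$. Likewise the right-hand side of the displayed identity is the meet of $(T\cap Z)^\perp$ and $(T\cap Z^\perp)^\perp$ in $\mathscr{P}(S)^\perp$, so De Morgan's law for ortholattices turns $T=T^{\perp\perp}$ into $(T\cap Z)^{\perp\perp}\vee(T\cap Z^\perp)^{\perp\perp}=(T\wedge Z)\vee(T\wedge Z^\perp)$. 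As $T\in\mathscr{P}(S)^\perp$ was arbitrary, $Z$ lies in the centre of $\mathscr{P}(S)^\perp$, which is the assertion.

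I do not expect a genuine obstacle here: once \autoref{centhm} is available this is pure bookkeeping. The one place to be careful is the translation between the annihilator language of \autoref{centhm} and the ortholattice language of centrality — one must use that $\wedge=\cap$ in $\mathscr{P}(S)^\perp$ together with De Morgan, and check that each of $T$, $Z$, $Z^\perp$, $T\cap Z$, $T\cap Z^\perp$ is a genuine member of $\mathscr{P}(S)^\perp$ so that the double orthocomplements may be dropped. It is also worth noting the precise form of the hypothesis of \autoref{centhm}, which demands a bi-hereditary $A$; this costs nothing here because \autoref{Lbotprp}\eqref{Lbotprp3} already delivers the stronger hereditary property of $T$.
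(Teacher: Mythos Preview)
Your proposal is correct and follows exactly the paper's own route: apply \autoref{centhm} with $A=T\in\mathscr{P}(S)^\perp$ and $I=Z\in\mathscr{P}(S)^\triangledown$, then take $^\perp$ and translate via $\wedge=\cap$ and De Morgan. The paper compresses all of this into a single displayed chain $A=A^{\perp\perp}=((A\cap I)^\perp\cap(A\cap I^\perp)^\perp)^\perp=(A\wedge I)\vee(A\wedge I^\perp)$, whereas you spell out the hypothesis-checking and the ortholattice bookkeeping; there is no substantive difference.
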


\begin{proof}
For any $A\in\mathscr{P}(S)^\perp$ and $I\in\mathscr{P}(S)^\triangledown$, \autoref{centhm} yields
\[A=A^{\perp\perp}=((A\cap I)^\perp\cap(A\cap I^\perp)^\perp)^\perp=(A\wedge I)\vee(A\wedge I^\perp).\]
\end{proof}

We define a relation $\triangledown$ on $\mathscr{P}(S)$ from the function $^\triangledown$ in the standard way, namely \[T\triangledown U\quad\Leftrightarrow\quad T\leq U^\triangledown.\]  As $^\triangledown$ was itself defined from the relation $\triangledown$ on $S$, $T\triangledown U\Leftrightarrow\forall t\in T\forall u\in U(t\triangledown u)$.  And if $A,B,C\in\mathscr{P}(S)^\perp$ and $A\triangledown B$ then, as central elements are neutral (i.e. distributivity holds for expressions involving a central element \textendash\, see \cite{MaedaMaeda1970} Theorem (4.13)), $C\wedge B\leq(A\vee C)\wedge B\leq(A^{\triangledown\triangledown}\vee C)\wedge B=(A^{\triangledown\triangledown}\wedge B)\vee(C\wedge B)=C\wedge B$, i.e. \[A\triangledown B\quad\Rightarrow\quad\forall C\in\mathscr{P}(S)^\perp((A\vee C)\wedge B=C\wedge B).\]

This last statement expresses the standard `del' relation that can be defined in an arbitrary lattice $\mathbb{P}$ (see \cite{MaedaMaeda1970} Definition (4.3)), so our $\triangledown$ relation is at least as strong as the standard del relation.  By \cite{MaedaMaeda1970} Theorem (4.18), they coincide if $S$ is (the multiplicative *-semigroup of) a C*-algebra, as in this case $\mathscr{P}(S)^\perp$ is separative (and hence SSC and SSC* in the terminology of \cite{MaedaMaeda1970}), by \cite{Bice2014} Theorem 3.26.  In this case we also see that $\mathscr{P}(S)^\triangledown$ actually is the entire centre of $\mathscr{P}(S)^\perp$ (see \cite{Bice2014} Theorem 3.30) which, as every *-annihilator of a C*-algebra is a C*-algebra in its own right, means that $\mathscr{P}(S)^\perp$ has the relative centre property, by the following generalization of \cite{Chevalier1991} Proposition 14.

\begin{thm}\label{relcen}
If $A=A^*$ is a bi-ideal then $\mathscr{P}(A)^{\triangledown_A}\cong\mathscr{P}(A^{\triangledown\triangledown})\cap\mathscr{P}(S)^\triangledown$ via
\[J\mapsto J^{\triangledown\triangledown}\quad\textrm{and}\quad I\mapsto A\cap I.\]
\end{thm}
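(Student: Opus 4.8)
The plan is to exhibit the two maps $J \mapsto J^{\triangledown\triangledown}$ (computed in $S$) and $I \mapsto A \cap I$ as mutually inverse order isomorphisms between $\mathscr{P}(A)^{\triangledown_A}$ and $\mathscr{P}(A^{\triangledown\triangledown}) \cap \mathscr{P}(S)^\triangledown$. The first thing I would do is check that the maps land in the stated ortholattices. For $I \in \mathscr{P}(A^{\triangledown\triangledown}) \cap \mathscr{P}(S)^\triangledown$, the intersection $A \cap I$ is a $\triangledown_A$-annihilator inside $A$: since $A = A^*$ is a bi-ideal of $S$, \autoref{triT} applies, so $\triangledown_A$ is just the restriction of $\triangledown$ to $A$, and intersecting a $\triangledown$-closed subset of $S$ with $A$ gives a $\triangledown_A$-closed subset of $A$ (concretely, $A \cap I = (A \cap I^\triangledown)^{\triangledown_A}$, using that $I = I^{\triangledown\triangledown}$). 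Conversely, for $J \in \mathscr{P}(A)^{\triangledown_A}$, I need $J^{\triangledown\triangledown} \in \mathscr{P}(S)^\triangledown$, which is automatic since every $T^{\triangledown\triangledown}$ lies in $\mathscr{P}(S)^\triangledown$, and $J^{\triangledown\triangledown} \subseteq A^{\triangledown\triangledown}$ since $J \subseteq A$, so it lies in $\mathscr{P}(A^{\triangledown\triangledown})$ as required. Order-preservation in both directions is immediate from monotonicity of $^{\triangledown\triangledown}$ and of $I \mapsto A \cap I$.

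The core of the argument is the two round-trip identities. Going out and back via $J \mapsto J^{\triangledown\triangledown} \mapsto A \cap J^{\triangledown\triangledown}$, I must show $A \cap J^{\triangledown\triangledown} = J$ for $J \in \mathscr{P}(A)^{\triangledown_A}$; the inclusion $J \subseteq A \cap J^{\triangledown\triangledown}$ is clear, and the reverse is where \autoref{centhm} enters — this is exactly the generalization of \cite{Chevalier1991} Proposition 14 being invoked. Writing $J = K^{\triangledown_A}$ for some $K \subseteq A$, I would apply \autoref{centhm} with the ideal $I := K^{\triangledown\triangledown}$ (a $\triangledown$-ideal of $S$, hence an honest ideal by \autoref{Lbotprp}\eqref{Lbotprp1}) to the bi-hereditary *-subsemigroup $A$: since $A$ is a bi-ideal it is bi-hereditary, so $A^\perp = (A \cap I)^\perp \cap (A \cap I^\perp)^\perp$, and with the $\triangledown$/$\perp$ coincidence on ideals (\autoref{Lbotprp}\eqref{Lbotprp5}) this should let me decompose $A \cap J^{\triangledown\triangledown}$ relative to $I$ and $I^\triangledown$ and collapse it back to $J$. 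Going the other way, $I \mapsto A \cap I \mapsto (A \cap I)^{\triangledown\triangledown}$, I must show $(A \cap I)^{\triangledown\triangledown} = I$ for $I \in \mathscr{P}(A^{\triangledown\triangledown}) \cap \mathscr{P}(S)^\triangledown$; here again \autoref{centhm}, now applied to the bi-hereditary *-subsemigroup $A^{\triangledown\triangledown}$ (it is a $\triangledown$-ideal, hence a bi-ideal, hence bi-hereditary) together with the ideal $I$, should give $(A^{\triangledown\triangledown})^\perp = (A^{\triangledown\triangledown} \cap I)^\perp \cap (A^{\triangledown\triangledown} \cap I^\perp)^\perp$, and since $I \subseteq A^{\triangledown\triangledown}$ this reads $(A^{\triangledown\triangledown})^\triangledown = I^\triangledown \cap (A^{\triangledown\triangledown} \cap I^\triangledown)^\triangledown$; taking $^\triangledown$ again and using that $A \cap I$ and $A^{\triangledown\triangledown} \cap I = I$ have the same $\triangledown$-closure (because $A$ is $\triangledown$-dense in $A^{\triangledown\triangledown}$, a consequence of $A$ being a bi-ideal — cf. \autoref{triT}) should yield $I$ back.

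Finally I would verify that the bijection preserves the orthocomplementation, so that it is genuinely an ortholattice isomorphism and not merely an order isomorphism: for $J \in \mathscr{P}(A)^{\triangledown_A}$ one checks $(J^{\triangledown_A})^{\triangledown\triangledown} = (J^{\triangledown\triangledown})^{\triangledown} \cap A^{\triangledown\triangledown}$ — i.e. the orthocomplement of $J$ in $\mathscr{P}(A)^{\triangledown_A}$ maps to the relative orthocomplement of $J^{\triangledown\triangledown}$ inside the interval $[\{0\}, A^{\triangledown\triangledown}]$ of $\mathscr{P}(S)^\triangledown$ — again using \autoref{triT} to identify $\triangledown_A$ with the restriction of $\triangledown$, and \autoref{centhm} (with the ideal $J^{\triangledown\triangledown}$) to handle the relativization. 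The main obstacle I anticipate is the second round-trip, $(A \cap I)^{\triangledown\triangledown} = I$: it hinges on the fact that intersecting with the smaller algebra $A$ does not lose any $\triangledown$-information, which is really a density statement, and getting the bookkeeping of \autoref{centhm} applied to $A^{\triangledown\triangledown}$ to cooperate with this density — rather than any deep new idea — is where the care is needed. Everything else reduces to \autoref{triT}, \autoref{centhm}, \autoref{Lbotprp}, and the elementary properties of polarities.
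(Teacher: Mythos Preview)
Your overall architecture is correct, and you have correctly identified \autoref{triT} and \autoref{centhm} as the main ingredients. However, there is a genuine gap at precisely the point you flag as the main obstacle: the claim that $(A\cap I)^{\triangledown\triangledown}=I$ for $I\in\mathscr{P}(A^{\triangledown\triangledown})\cap\mathscr{P}(S)^\triangledown$. Your justification, ``$A$ is $\triangledown$-dense in $A^{\triangledown\triangledown}$, a consequence of $A$ being a bi-ideal --- cf.\ \autoref{triT}'', does not go through. \autoref{triT} only says that $\triangledown_A$ is the restriction of $\triangledown$ to $A\times A$; it says nothing about how intersection with $A$ interacts with $\triangledown$-closures of ideals of $S$. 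Likewise your application of \autoref{centhm} to $A^{\triangledown\triangledown}$ just recovers the central decomposition $A^{\triangledown\triangledown}=I\vee(A^{\triangledown\triangledown}\cap I^\triangledown)$, which is automatic from \autoref{annidealcentre} and does not advance the argument.

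The paper fills this gap with an explicit element computation that genuinely uses the bi-ideal hypothesis. Set $B=I\cap(A\cap I)^\triangledown\in\mathscr{P}(S)^\triangledown$. One has $A\cap B\subseteq(A\cap I)\cap(A\cap I)^\triangledown=\{0\}$, so for $a\in A$ and $b\in B$ the element $a^*a\,b^*b\,a^*a$ lies in $A$ (bi-ideal) and in $B$ (ideal), hence vanishes; properness then gives $ba^*=0$, so $A\subseteq B^\mathrm{L}=B^\triangledown$ and $B\subseteq A^\triangledown\cap A^{\triangledown\triangledown}=\{0\}$, whence $I\subseteq(A\cap I)^{\triangledown\triangledown}$. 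The same sandwich trick (in the spirit of the \emph{proof} of \autoref{triT}, but not a consequence of its statement) is also what actually establishes $A\cap I\in\mathscr{P}(A)^{\triangledown_A}$; your justification ``using that $I=I^{\triangledown\triangledown}$'' is not enough there either. For the other round trip $A\cap J^{\triangledown\triangledown}=J$ and for orthocomplement preservation, the paper does not apply \autoref{centhm} in $S$ as you propose, but rather invokes \autoref{annidealcentre} \emph{inside} $A$ (so that $J$ is central in $\mathscr{P}(A)^{\perp_A}$), together with $A\cap J^{\triangledown\triangledown}\cap J^{\triangledown_A}\subseteq J^{\triangledown\triangledown}\cap J^\triangledown=\{0\}$ from \autoref{triT}; this is what makes your ``decompose and collapse'' step actually work.
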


\begin{proof}
If $I\in\mathscr{P}(S)^\triangledown$, $a\in(A\cap I)^{\triangledown_A\triangledown_A}$, $b\in I^\triangledown$ and $s\in S$ then, as $I^\triangledown$ is an ideal and $A$ is bi-hereditary, $aa^*s^*b^*bsaa^*\in I^\triangledown\cap A\subseteq(A\cap I)^{\triangledown_A}$.  As $a\in(A\cap I)^{\triangledown_A\triangledown_A}$, $aa^*s^*b^*bsaa^*a=0$ and hence $bsa=0$.  As $b\in I^\triangledown$ and $s\in S$ were arbitrary, $a\in I^{\triangledown\triangledown}=I$.  As $a\in(A\cap I)^{\triangledown_A\triangledown_A}$ was arbitrary, $(A\cap I)^{\triangledown_A\triangledown_A}\subseteq A\cap I\subseteq (A\cap I)^{\triangledown_A\triangledown_A}$ so $A\cap I\in\mathscr{P}(A)^{\triangledown_A}$.

In particular, $A\cap I^\triangledown\in\mathscr{P}(A)^{\triangledown_A}$ and $A\cap I^\triangledown\subseteq(A\cap I)^{\triangledown_A}$.  By \autoref{centhm}
\[(A\cap I)^{\triangledown_A}\cap(A\cap I^\triangledown)^{\triangledown_A}\subseteq(A\cap I)^\perp\cap(A\cap I^\perp)^\perp\cap A=A^\perp\cap A=\{0\}.\]
By \autoref{annidealcentre} applied in $A$ we have $(A\cap I)^{\triangledown_A}=((A\cap I)^{\triangledown_A}\cap A\cap I^\triangledown)\vee\{0\}$ and hence $(A\cap I)^{\triangledown_A}\subseteq A\cap I^\triangledown$, i.e. the map $I\mapsto A\cap I$ preserves the orthocomplement.

Now let $B=I\cap(A\cap I)^\triangledown\in\mathscr{P}(S)^\triangledown$.  As $A\cap B=\{0\}$, we have $a^*ab^*ba^*a=0$, for all $a\in A$ and $b\in B$, and hence $ba^*=0$.  As $b\in B$ was arbitrary, we have $a\in B^\mathrm{L}=B^\triangledown$.  As $a\in A$ was arbitrary, $A\subseteq B^\triangledown$ so $B\subseteq A^\triangledown$.  Thus if $I\subseteq A^{\triangledown\triangledown}$ then, as $B\subseteq I$, $B=\{0\}$ and hence $I\subseteq(A\cap I)^{\triangledown\triangledown}(\subseteq I^{\triangledown\triangledown}=I)$, by \autoref{annidealcentre}, i.e. $J\mapsto J^{\triangledown\triangledown}$ is a left inverse for $I\mapsto A\cap I$.

Lastly, take $J\in\mathscr{P}(A)^{\triangledown_A}$.  We have already proved that $A\cap J^{\triangledown\triangledown}\in\mathscr{P}(A)^{\triangledown_A}$ and we immeidately see that $J\subseteq A\cap J^{\triangledown\triangledown}$.  By \autoref{triT}, $J^{\triangledown_A}\subseteq J^\triangledown$ so $A\cap J^{\triangledown\triangledown}\cap J^{\triangledown_A}\subseteq J^{\triangledown\triangledown}\cap J^\triangledown=\{0\}$ and hence $J=A\cap J^{\triangledown\triangledown}$, by \autoref{annidealcentre} applied in $A$, i.e. $J\mapsto J^{\triangledown\triangledown}$ is also a right inverse for $I\mapsto A\cap I$.
\end{proof}

Even if $A$ is just a bi-hereditary *-subsemigroup, rather than a bi-ideal, the above proof still shows that $I\mapsto A\cap I$ is an orthoisomorphism of $\mathscr{P}(A^{\triangledown\triangledown})\cap\mathscr{P}(S)^\triangledown$ onto a subset of $\mathscr{P}(A)^{\triangledown_A}$.  A similar argument yields the following.

\begin{thm}\label{comann}
If $A$ is a commutative bi-hereditary *-subsemigroup of $S$ then \[\mathscr{P}(A)^{\perp_A}=\{A\cap I:I\in\mathscr{P}(S)^\perp\}.\]
\end{thm}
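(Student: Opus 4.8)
The plan is to mimic the structure of the proof of \autoref{relcen}, but working with $\perp_A$ in place of $\triangledown_A$ and exploiting commutativity of $A$ to collapse the three orthogonality relations into one. The inclusion $\{A\cap I:I\in\mathscr{P}(S)^\perp\}\subseteq\mathscr{P}(A)^{\perp_A}$ should be the easy direction: given $I=I^{\perp\perp}\in\mathscr{P}(S)^\perp$, one checks that $A\cap I$ is closed under $\perp_A$, i.e. $A\cap I=(A\cap I)^{\perp_A\perp_A}$. Since $A$ is bi-hereditary, for $a\in A_+$ and $b\in I_+$ the element $ab a\in A$ (indeed $aba=a^*ba\in a^*S_+a$), and commutativity lets us turn $(A\cap I)^{\perp_A}$-membership conditions into vanishing of products $a b$; the key is that if $a\in(A\cap I)^{\perp_A\perp_A}$ then testing against suitable $aba$ forces $a\perp b$ for all $b\in I$, hence $a\in I^{\perp}$, hence $a\in A\cap I$ — so $(A\cap I)^{\perp_A\perp_A}\subseteq A\cap I$, giving equality since the reverse always holds.

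For the harder inclusion $\mathscr{P}(A)^{\perp_A}\subseteq\{A\cap I:I\in\mathscr{P}(S)^\perp\}$, I would take $J\in\mathscr{P}(A)^{\perp_A}$ and produce $I\in\mathscr{P}(S)^\perp$ with $A\cap I=J$. The natural candidate is $I=J^{\perp\perp}$ (the *-annihilator closure of $J$ computed in all of $S$). One always has $J\subseteq A\cap J^{\perp\perp}$, so it remains to prove $A\cap J^{\perp\perp}\subseteq J$. Following the \autoref{relcen} template, I would show $A\cap J^{\perp\perp}\cap J^{\perp_A}=\{0\}$: any $a$ in this intersection lies in $J^{\perp_A}$ so $a\perp_A J$, but also in $J^{\perp\perp}\subseteq (J^{\perp_A})^{\perp}$ after one relates $J^{\perp_A}$ to (part of) $J^\perp$ — here I'd use a commutativity-aided version of \autoref{triT}, or argue directly that for $a\in A_+$, $b\in A_+$ with $ab=0$ one gets genuine orthogonality $a\perp b$ in $S$ via \autoref{s*st}, so $J^{\perp_A}\subseteq J^\perp\cap A$ and thus $a\in J^{\perp\perp}\cap J^\perp=\{0\}$. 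Then, since $\mathscr{P}(A)^{\perp_A}$ is a complete ortholattice (by \autoref{Lperpequiv} applied to the proper *-semigroup $A$) and $J\le A\cap J^{\perp\perp}$ with $(A\cap J^{\perp\perp})\wedge J^{\perp_A}=\{0\}$, I want to conclude $A\cap J^{\perp\perp}=J$. This last step is where I need to be careful: unlike in \autoref{relcen} I cannot invoke \autoref{annidealcentre} because $A\cap J^{\perp\perp}$ need not be central in $\mathscr{P}(A)^{\perp_A}$. Instead I'd argue that $A\cap J^{\perp\perp}\subseteq J^{\perp_A\perp_A}=J$ directly: if $a\in A\cap J^{\perp\perp}$ and $c\in J^{\perp_A}$, then (using the previous sentence) $c\in J^\perp$, so $a\perp c$ since $a\in J^{\perp\perp}$; as $c\in J^{\perp_A}$ was arbitrary, $a\in J^{\perp_A\perp_A}=J$.

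I expect the main obstacle to be the bookkeeping around which orthogonality relation ($\perp$, $\mathrm{L}$, or $\triangledown$) is in play at each moment, and in particular establishing cleanly that $J^{\perp_A}= J^\perp\cap A$ for $J\subseteq A$ when $A$ is merely a commutative bi-hereditary *-subsemigroup (not an ideal, so \autoref{triT} does not apply verbatim). The point is that in the commutative proper *-semigroup $A$ the relation $\perp_A$ coincides with ``$ab=0$'' by \autoref{triequiv}, and by \autoref{s*st} applied inside $S$ this ``$ab=0$'' (for $a,b\in A_+$, which suffices since $A$ is self-adjoint and we can pass to positive parts via \autoref{pospart}) is equivalent to $a\perp b$ in $S$; bi-hereditarity is what lets us insert the extra factors $a(\cdot)a$ needed to run the double-annihilator arguments without leaving $A$. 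Once that identification is in hand, the rest is a routine adaptation of the \autoref{relcen} proof with $\perp$ replacing $\triangledown$.
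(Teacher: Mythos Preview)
Your sketch of the first inclusion contains a slip that would make the argument fail as written: you take $b\in I_+$ and aim to conclude $a\perp b$ for all such $b$, giving $a\in I^\perp$ --- but that is the \emph{wrong} conclusion (you want $a\in I$, not $a\in I^\perp$). The correct move, and the one the paper makes, is to take $b\in I^\perp$ and show $a\perp b$, so that $a\in I^{\perp\perp}=I$. Concretely, for $a\in(A\cap I)^{\perp_A\perp_A}$ and $b\in I^\perp$ one forms $aa^*b^*baa^*$, which lies in $A$ by bi-hereditarity and in $(A\cap I)^{\perp_A}$ because for any $c\in A\cap I$ commutativity of $A$ gives $c\,aa^*b^*baa^*=aa^*c\,b^*baa^*=0$ (since $cb^*=0$). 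Then $a\perp_A aa^*b^*baa^*$ forces $ba=0$, and similarly $ba^*=0$. With this correction your first direction is exactly the paper's argument.

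For the second inclusion you are wrong to think \autoref{annidealcentre} is unavailable: since $A$ is commutative and proper, \autoref{triequiv} gives $\perp_A=\triangledown_A$, so $\mathscr{P}(A)^{\perp_A}=\mathscr{P}(A)^{\triangledown_A}$ and \autoref{annidealcentre} applied in $A$ makes \emph{every} element central. This is precisely how the paper concludes: from $A\cap J^{\perp\perp}\in\mathscr{P}(A)^{\perp_A}$ (by the first paragraph) and $(A\cap J^{\perp\perp})\cap J^{\perp_A}\subseteq J^{\perp\perp}\cap J^\perp=\{0\}$, centrality of $J$ yields $A\cap J^{\perp\perp}=J$. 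That said, your alternative direct argument is perfectly valid and in fact shorter: the identity $J^{\perp_A}=J^\perp\cap A$ is immediate from the definition of $\perp_A$ (it is simply $\perp$ restricted to $A$ --- no need for \autoref{triT}, \autoref{s*st}, or passage to positive parts), and then $a\in A\cap J^{\perp\perp}$ and $c\in J^{\perp_A}\subseteq J^\perp$ give $a\perp c$, whence $a\in(J^{\perp_A})^\perp\cap A=(J^{\perp_A})^{\perp_A}=J$. This bypasses both the first paragraph and the centrality appeal; the paper's route has the mild advantage of reusing machinery already in place, while yours is more self-contained.
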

\
\begin{proof}
If $I\in\mathscr{P}(S)^\perp$, $a\in(A\cap I)^{\perp_A\perp_A}$, $b\in I^\perp$ and $c\in A\cap I$ then we have $caa^*b^*baa^*=aa^*cb^*baa^*=0$.  As $c\in A\cap I$ was arbitrary, $aa^*b^*baa^*\in(A\cap I)^\perp$.  As $A$ is bi-hereditary, we also have $aa^*b^*baa^*\in A$ and hence $aa^*b^*baa^*\in(A\cap I)^{\perp_A}$.  As $a\in(A\cap I)^{\perp_A\perp_A}$, $aa^*b^*baa^*a=0$ and hence $ba=0$.  Likewise, $ba^*=0$ which, as $b\in I^\perp$ was arbitrary, means that $a\in I^{\perp\perp}=I$.  As $a\in(A\cap I)^{\perp_A\perp_A}$ was arbitrary, $A\cap I=(A\cap I)^{\perp_A\perp_A}\in\mathscr{P}(A)^{\perp_A}$.

On the other hand, if $J\in\mathscr{P}(A)^{\perp_A}=\mathscr{P}(A)^{\triangledown_A}$ then, by the previous paragraph, $A\cap J^{\perp\perp}\in\mathscr{P}(A)^{\perp_A}$ and $A\cap J^{\perp\perp}\cap J^{\perp_A}\subseteq J^{\perp\perp}\cap J^\perp=\{0\}$.  Applying \autoref{annidealcentre} in $A$, we have $J=A\cap J^{\perp\perp}\in\{A\cap I:I\in\mathscr{P}(S)^\perp\}$.
\end{proof}

Note that when the $A$ above is an *-annihilator, the result is saying that \[\mathscr{P}(A)^{\perp_A}=[\{0\},A],\] where the interval $[\{0\},A]$ is taken in $\mathscr{P}(S)^\perp$.  There are examples of proper *-semigroups $S$ (even C*-algebras - see \cite{Bice2014} Example 3.87) such that $\mathscr{P}(S)^\perp$ is not orthomodular, in which case there are intervals $[\{0\},A]$ that are not ortholattices (w.r.t. $^{\perp_A}$).  So the commutativity assumption is crucial in \autoref{comann}.

\begin{dfn}
We call $A\in\mathscr{P}(S)^\perp$ \emph{$\triangledown$-finite} if, for all $B\in\mathscr{P}(S)^\perp$ with $B\subseteq A$,
\[B^\triangledown=A^\triangledown\quad\Rightarrow\quad B=A.\]
\end{dfn}

As $B^\triangledown=A^\triangledown\Leftrightarrow B^{\triangledown\triangledown}=A^{\triangledown\triangledown}$ and $A^{\triangledown\triangledown}=\bigcap_{A\subseteq I\in\mathscr{P}(S)^\triangledown}I$, for all $A,B\subseteq S$, we see that the $\triangledown$-finite *-annihilators are precisely the $\sim_{\mathscr{P}(S)^\triangledown}$-finite elements of the ortholattice $\mathscr{P}(S)^\perp$, according to \cite{Bice2014b} Definition 4.3 and 4.5.

\begin{cor}
If $A\in\mathscr{P}(S)^\perp$ is commutative then $A$ is $\triangledown$-finite.
\end{cor}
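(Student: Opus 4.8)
The plan is to reduce the claim to \autoref{comann} plus a general ortholattice fact about intervals. Suppose $A \in \mathscr{P}(S)^\perp$ is commutative and take $B \in \mathscr{P}(S)^\perp$ with $B \subseteq A$ and $B^\triangledown = A^\triangledown$. First I would note that $B \in [\{0\}, A] = \mathscr{P}(A)^{\perp_A}$ by \autoref{comann}, so $B$ is itself a *-annihilator computed inside $A$, namely $B = B^{\perp_A \perp_A}$. Moreover, since $A$ is commutative and bi-hereditary, so is $B$, so \autoref{comann} applies to $B$ as well, giving $\mathscr{P}(B)^{\perp_B} = [\{0\}, B]$.

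The key point is to translate the hypothesis $B^\triangledown = A^\triangledown$ into a statement inside the ortholattice $\mathscr{P}(A)^{\perp_A}$. From $B^\triangledown = A^\triangledown$ we get $B^{\triangledown\triangledown} = A^{\triangledown\triangledown}$, and intersecting with $A$ gives $A \cap B^{\triangledown\triangledown} = A \cap A^{\triangledown\triangledown} = A$ (the latter since $A \subseteq A^{\triangledown\triangledown}$ always). Now I want to identify $A \cap B^{\triangledown\triangledown}$ with the double relative complement of $B$ inside $A$ with respect to $\triangledown_A$, i.e.\ with $B^{\triangledown_A\triangledown_A}$. For this I would invoke the argument in the proof of \autoref{relcen}: since $A$ is a bi-hereditary *-subsemigroup (the commutative case is even simpler), that proof shows $A \cap B^{\triangledown\triangledown} = B^{\triangledown_A\triangledown_A}$ for *-annihilator ideals $B$ of $A$ — or, more carefully, I would run the same short computation directly. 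Concretely, one inclusion $B^{\triangledown_A\triangledown_A} \subseteq A \cap B^{\triangledown\triangledown}$ follows from $B^{\triangledown_A} \supseteq A \cap B^\triangledown$ (by \autoref{triT}-type reasoning, $\triangledown_A$ is the restriction of $\triangledown$), and the reverse from the bi-hereditary trick: for $a \in A \cap B^{\triangledown\triangledown}$ and $c \in B^{\triangledown_A}$, the element $aa^*c^*\!caa^* \in B^{\triangledown_A}$ — wait, one needs $c^*c \in B^\triangledown$, which holds since $B^{\triangledown_A} \subseteq B^\triangledown$ — lands in $B^{\triangledown\triangledown} \cap B^{\triangledown} = \{0\}$, forcing $ca = 0$, so $a \in B^{\triangledown_A\triangledown_A}$.

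Granting $A \cap B^{\triangledown\triangledown} = B^{\triangledown_A\triangledown_A}$, the hypothesis becomes $B^{\triangledown_A\triangledown_A} = A$, the top of the interval $[\{0\},A]$. But in any complete ortholattice, $B^{\triangledown_A\triangledown_A} = A$ together with $B^{\triangledown_A} \subseteq A$ gives $B^{\triangledown_A} \subseteq A \cap B^{\triangledown_A\triangledown_A\triangledown_A} = A \cap B^{\triangledown_A} $... the cleanest finish: from $B^{\triangledown_A\triangledown_A}=A$ we get $B^{\triangledown_A} = A^{\triangledown_A} = \{0\}$ (orthocomplementing in $\mathscr{P}(A)^{\perp_A}$, whose top is $A$), hence $B = B^{\triangledown_A\triangledown_A} = \{0\}^{\triangledown_A} = A$. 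So $B = A$ and $A$ is $\triangledown$-finite.

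\textbf{Main obstacle.} The delicate step is the identification $A \cap B^{\triangledown\triangledown} = B^{\triangledown_A\triangledown_A}$ and, underlying it, making sure $\triangledown_A$ really is the restriction of $\triangledown$ and that $B^{\triangledown_A}$ lands inside $B^\triangledown$ — this is exactly the interplay handled in \autoref{relcen} and \autoref{triT}, but those are stated for bi-ideals, so I would either argue $A$ commutative bi-hereditary suffices (as the remark after \autoref{relcen} indicates for one direction) or redo the two-line computations. Alternatively, and perhaps more cleanly, one can bypass relative annihilators entirely: observe that $A^{\triangledown\triangledown} \in \mathscr{P}(S)^\triangledown$ is central, so $A = A \wedge A^{\triangledown\triangledown}$, and $B^\triangledown = A^\triangledown$ says $B$ and $A$ have the same central cover; then commutativity of $A$ (via \autoref{comann}, $[\{0\},A]$ is a genuine Boolean-like interval on which $\triangledown$ behaves well) forces $B = A$. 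I expect writing this second route requires the observation that on a commutative *-annihilator the del relation $\triangledown$ restricted to $[\{0\},A]$ is just the ortholattice orthocomplement, which is where \autoref{comann} does the real work.
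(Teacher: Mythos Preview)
Your approach is essentially the paper's, but you are making it harder than necessary because of a misplaced worry. The paper's proof is simply: by \autoref{comann} and commutativity of $A$ one has $[\{0\},A]=\mathscr{P}(A)^{\perp_A}=\mathscr{P}(A)^{\triangledown_A}$, and then \autoref{relcen} gives $B=A\cap B^{\triangledown\triangledown}=A\cap A^{\triangledown\triangledown}=A$ directly.

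Your ``main obstacle'' is not an obstacle at all. You worry that \autoref{relcen} and \autoref{triT} are stated for bi-ideals while $A$ is only a *-annihilator, but every *-annihilator \emph{is} a bi-ideal: by \autoref{Lbotprp}\eqref{Lbotprp3} it is a quasi-ideal, and any quasi-ideal $I$ satisfies $ISI\subseteq IS\cap SI\subseteq I$. So \autoref{relcen} applies to $A$ verbatim, and the last paragraph of its proof already gives you $B=A\cap B^{\triangledown\triangledown}$ for any $B\in\mathscr{P}(A)^{\triangledown_A}$, with no need to redo the computation or pass through $B^{\triangledown_A\triangledown_A}$ by hand. Once you see this, your argument collapses to the paper's two-line proof.
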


\begin{proof} By \autoref{relcen} and \autoref{comann}, we have
\[[\{0\},A]=\mathscr{P}(A)^{\perp_A}=\mathscr{P}(A)^{\triangledown_A}=\{A\cap I:I\in\mathscr{P}(S)^\triangledown\}.\]
So if $B\in[\{0\},A]$ and $B^\triangledown=A^\triangledown$ then $B=A\cap B^{\triangledown\triangledown}=A\cap A^{\triangledown\triangledown}=A$.
\end{proof}

The converse also holds when $S$ is (the multiplicative *-semigroup of) a C*-algebra, i.e. in this case commutative and $\triangledown$-finite *-annihilators coincide (see \cite{Bice2014} Theorem 3.49).  They are also given various other names in slightly different contexts, like monad/lowest/simple/Boolean/D-element (see the comments after \cite{Bice2014b} Theorem 4.8).

The important thing about $\triangledown$-finite elements is that they immediately yield type decompositions.  For example, from \cite{Bice2014b} Theorem 2.6 (together with \cite{Bice2014b} Proposition 4.4 and Theorem 4.6), we get the following result.

\begin{thm}\label{typeI}
There is a unique $A\in\mathscr{P}(S)^\triangledown$ such that $A=B^{\triangledown\triangledown}$, for some $\triangledown$-finite $B\in\mathscr{P}(S)^\perp$, while $A^\perp$ contains no $\triangledown$-finite *-annihilators whatsoever.
\end{thm}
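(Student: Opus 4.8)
The plan is to deduce the statement from the abstract type-decomposition result \cite{Bice2014b} Theorem 2.6, which is designed exactly for a complete lattice carrying a complete central subortholattice together with a notion of ``$\sim$-finite'' element. Here that subortholattice is $\mathbb{Z}:=\mathscr{P}(S)^\triangledown$, which by \autoref{Lperpequiv} is a complete ortholattice whose orthocomplement is the restriction of $^\perp$, whose infima are intersections, and whose suprema coincide with those taken in $\mathscr{P}(S)^\perp$ (by the discussion following \autoref{Lperpequiv}), and which by \autoref{annidealcentre} lies in the centre of $\mathscr{P}(S)^\perp$. As recorded just before the statement, $B\mapsto B^{\triangledown\triangledown}$ is the central cover map and a *-annihilator is $\triangledown$-finite precisely when it is $\sim_{\mathbb{Z}}$-finite in the sense of \cite{Bice2014b}. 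So the first step is to set up these identifications and note the elementary facts that $B\mapsto B^{\triangledown\triangledown}$ is order preserving and additive over joins (a join of central elements being central) and that central elements of $\mathscr{P}(S)^\perp$ are neutral (see \cite{MaedaMaeda1970}).

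Next I would run the exhaustion argument underlying \cite{Bice2014b} Theorem 2.6 in this concrete setting. By Zorn's Lemma, choose a family $\{B_i\}_{i\in\Lambda}$ of nonzero $\triangledown$-finite *-annihilators maximal subject to the central covers $B_i^{\triangledown\triangledown}$ being pairwise orthogonal, and set $B=\bigvee_i B_i$ and $A=\bigvee_i B_i^{\triangledown\triangledown}$ (equivalently, the supremum in $\mathbb{Z}$). Then $B\subseteq A$ since $B_i\subseteq B_i^{\triangledown\triangledown}$, while $B_i\subseteq B$ forces $B_i^{\triangledown\triangledown}\subseteq B^{\triangledown\triangledown}$, giving $A=B^{\triangledown\triangledown}\in\mathbb{Z}$. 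To see $A^\perp$ has no $\triangledown$-finite *-annihilator: if $\{0\}\neq C\subseteq A^\perp$ with $C$ $\triangledown$-finite, then $A^\perp\in\mathbb{Z}$ gives $C^{\triangledown\triangledown}\subseteq A^\perp$, so $C^{\triangledown\triangledown}$ is a nonzero element of $\mathbb{Z}$ orthogonal to $A$, hence to every $B_i^{\triangledown\triangledown}$, contradicting maximality. Uniqueness goes the same way: if $A'=B'^{\triangledown\triangledown}$ with $B'$ $\triangledown$-finite and $A'^\perp$ free of $\triangledown$-finite *-annihilators, then $B'$ being $\triangledown$-finite forces $A'\subseteq A$ by maximality (splitting $B'$ along the central $A$ as above), while $B\wedge A'^\perp$, being a sub-*-annihilator of the $\triangledown$-finite $B$ and contained in $A'^\perp$, must be $\{0\}$, so neutrality of $A'$ yields $B\subseteq A'$ and hence $A\subseteq A'$; thus $A=A'$.

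The one genuinely nontrivial point, which I expect to be the main obstacle, is that $B=\bigvee_i B_i$ is itself $\triangledown$-finite, equivalently that an orthogonal join of $\triangledown$-finite *-annihilators with pairwise orthogonal central covers remains $\triangledown$-finite (and, used above, that sub-*-annihilators of $\triangledown$-finite ones are $\triangledown$-finite). Morally, given $C\subseteq B$ with $C^\triangledown=B^\triangledown$ one wants to use neutrality of the $B_i^{\triangledown\triangledown}$ to decompose $C=\bigvee_i(C\wedge B_i^{\triangledown\triangledown})$ and $B=\bigvee_i B_i$, reduce to $C\wedge B_i^{\triangledown\triangledown}=B_i$ inside each interval $[\{0\},B_i^{\triangledown\triangledown}]$, and apply $\triangledown$-finiteness of $B_i$ there; but making this infinite distribution over $\mathbb{Z}$ rigorous in a general proper *-semigroup (where $\triangledown$ is only known to dominate the standard del relation) is precisely the content of \cite{Bice2014b} Proposition 4.4 and Theorem 4.6. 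Accordingly, the plan is to verify the hypotheses of \cite{Bice2014b} Theorem 2.6 against those two results and then invoke it, rather than to reprove the requisite additivity and downward closure of $\triangledown$-finiteness here.
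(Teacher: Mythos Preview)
Your proposal is correct and matches the paper's approach: the paper's proof is a one-line citation of \cite{Bice2014b} Theorem 2.6 (together with \cite{Bice2014b} Proposition 4.4 and Theorem 4.6), and you have correctly identified these as the inputs and even unpacked the exhaustion argument and the downward-closure/additivity facts that those cited results supply.
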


When $S$ is a von Neumann algebra the $A$ above is the type $\mathrm{I}$ part of $S$, while $A^\perp$ consists of the type $\mathrm{II}$ and $\mathrm{III}$ parts of $S$.  Likewise, using \cite{Bice2014b} Theorem 2.4, we get the following decomposition which, when $S$ is a von Neumann algebra, is just the usual decomposition into abelian (type $\mathrm{I}_1$) and properly non-abelian parts.

\begin{thm}\label{typeI1}
There is a unique $\triangledown$-finite $A\in\mathscr{P}(S)^\triangledown$ such that $A^\perp$ contains no $\triangledown$-finite elements of $\mathscr{P}(S)^\triangledown$.
\end{thm}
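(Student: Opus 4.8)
The plan is to reduce \autoref{typeI1} to the abstract decomposition results of \cite{Bice2014b}, just as \autoref{typeI} was obtained. The key observation is that $\mathscr{P}(S)^\triangledown$ is itself a complete ortholattice (in fact a complete Boolean algebra, since it sits inside the centre of $\mathscr{P}(S)^\perp$ by \autoref{annidealcentre}, and central elements of an ortholattice form a Boolean algebra), so it makes sense to run an abstract ``properly non-$\triangledown$-finite'' decomposition inside it. First I would note that the $\triangledown$-finite elements of $\mathscr{P}(S)^\triangledown$ are exactly the $\sim_{\mathscr{P}(S)^\triangledown}$-finite elements of the ortholattice, exactly as recorded in the excerpt just before \autoref{typeI}: for $A,B\subseteq S$ we have $B^\triangledown=A^\triangledown \Leftrightarrow B^{\triangledown\triangledown}=A^{\triangledown\triangledown}$, and $A^{\triangledown\triangledown}=\bigcap_{A\subseteq I\in\mathscr{P}(S)^\triangledown}I$. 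So ``$\triangledown$-finite'' is genuinely a finiteness notion relative to the sublattice $\mathscr{P}(S)^\triangledown$.

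Next I would invoke \cite{Bice2014b} Theorem 2.4 (the same abstract result used for \autoref{typeI1}'s sibling \autoref{typeI} uses Theorem 2.6) applied to the complete ortholattice $\mathbb{P}=\mathscr{P}(S)^\triangledown$ with its relative finiteness relation. That theorem produces a unique finite element $A$ of $\mathbb{P}$ whose orthocomplement (taken in $\mathbb{P}$, which for central elements coincides with $^\triangledown$ restricted to $\mathscr{P}(S)^\triangledown$) contains no nonzero finite element. The point to verify is that the hypotheses of \cite{Bice2014b} Theorem 2.4 are met: one needs that finiteness is well-behaved — closed downward, additive over orthogonal joins, and that every element dominates a maximal finite piece — which are precisely \cite{Bice2014b} Proposition 4.4 and Theorem 4.6, already cited for \autoref{typeI}. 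Here ``$\triangledown$-finite'' plays the role of the relative finiteness, and the orthocomplementation in $\mathscr{P}(S)^\triangledown$ is $I\mapsto I^\triangledown$, with suprema agreeing with those in $\mathscr{P}(S)^\perp$ (the displayed equation $\bigvee_{\mathscr{P}(S)^\triangledown}\mathcal{T}=\bigvee_{\mathscr{P}(S)^\perp}\mathcal{T}$ in the excerpt) so that ``no $\triangledown$-finite elements of $\mathscr{P}(S)^\triangledown$ in $A^\perp$'' is the correct reading of the abstract conclusion. Uniqueness of $A$ is inherited directly from the uniqueness clause in \cite{Bice2014b} Theorem 2.4.

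The main obstacle I anticipate is purely bookkeeping: matching the precise hypotheses and terminology of \cite{Bice2014b} Theorem 2.4 — in particular making sure that the relevant ``finiteness'' axioms (downward closure, additivity, existence of a largest finite subelement under each element) are verified for the relation $\triangledown$-finiteness on the sublattice $\mathscr{P}(S)^\triangledown$ rather than on $\mathscr{P}(S)^\perp$. Since $\mathscr{P}(S)^\triangledown$ is Boolean, orthomodularity is automatic there even though $\mathscr{P}(S)^\perp$ itself need not be orthomodular, so the abstract machinery applies cleanly. Once that identification is in place, the proof is a one-line appeal:

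\begin{proof}
By \autoref{annidealcentre}, $\mathscr{P}(S)^\triangledown$ lies in the centre of $\mathscr{P}(S)^\perp$, hence is a complete Boolean algebra under $\subseteq$ with orthocomplementation $I\mapsto I^\triangledown$ and suprema inherited from $\mathscr{P}(S)^\perp$.  As noted after the definition of $\triangledown$-finiteness, the $\triangledown$-finite elements of $\mathscr{P}(S)^\triangledown$ are exactly its $\sim_{\mathscr{P}(S)^\triangledown}$-finite elements in the sense of \cite{Bice2014b}, and by \cite{Bice2014b} Proposition 4.4 and Theorem 4.6 this finiteness relation satisfies the hypotheses of \cite{Bice2014b} Theorem 2.4.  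Applying that theorem to the complete ortholattice $\mathscr{P}(S)^\triangledown$ yields a unique $\triangledown$-finite $A\in\mathscr{P}(S)^\triangledown$ whose orthocomplement $A^\triangledown$ in $\mathscr{P}(S)^\triangledown$ contains no nonzero $\triangledown$-finite element; since the join and meet in $\mathscr{P}(S)^\triangledown$ agree with those in $\mathscr{P}(S)^\perp$ and $A^\triangledown=A^\perp$ for $A\in\mathscr{P}(S)^\triangledown$ by \autoref{Lbotprp}\eqref{Lbotprp5}, this says precisely that $A^\perp$ contains no $\triangledown$-finite elements of $\mathscr{P}(S)^\triangledown$.
\end{proof}
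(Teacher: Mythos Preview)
Your proposal is essentially the same as the paper's: both reduce the statement to \cite{Bice2014b} Theorem 2.4, with the $\triangledown$-finite elements identified as the $\sim_{\mathscr{P}(S)^\triangledown}$-finite ones via the remark immediately following the definition. The paper simply writes ``Likewise, using \cite{Bice2014b} Theorem 2.4'' with no further argument, whereas you spell out why the hypotheses (completeness, \cite{Bice2014b} Proposition~4.4 and Theorem~4.6, and $A^\triangledown=A^\perp$ for $A\in\mathscr{P}(S)^\triangledown$) are in place.

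One small caution on your framing: you apply Theorem~2.4 to $\mathbb{P}=\mathscr{P}(S)^\triangledown$ as a standalone ortholattice, but $\triangledown$-finiteness of an $A\in\mathscr{P}(S)^\triangledown$ is tested against \emph{all} $B\in\mathscr{P}(S)^\perp$, not just central $B$ (indeed, restricted to $\mathscr{P}(S)^\triangledown$ the condition $B^\triangledown=A^\triangledown$ with $B\subseteq A$ forces $B=A$ trivially). The paper, paralleling its use of Theorem~2.6 for \autoref{typeI}, is really invoking Theorem~2.4 in the ambient lattice $\mathscr{P}(S)^\perp$ with central sublattice $\mathscr{P}(S)^\triangledown$; your conclusion is correct, but make sure your appeal to \cite{Bice2014b} is to that setup rather than to $\mathscr{P}(S)^\triangledown$ with its internal finiteness relation.
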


While on the topic of type decomposition, let us point out that such decompositions have traditionally been obtained with respect to the entire centre, rather than a subset like $\mathscr{P}(S)^\triangledown$ here.  The problem with using the centre of $\mathscr{P}(S)^\perp$ here is that it may not be complete.  Indeed, previous type decomposition results have used an assumption like orthomodularity or separativity to show that the centre of a complete lattice is complete, assumptions which may not hold for $\mathscr{P}(S)^\perp$.  However $\mathscr{P}(S)^\triangledown$, being obtained from polarity, is automatically a complete lattice, which allows us to always obtain type decompositions with respect to $\mathscr{P}(S)^\triangledown$.

We finish this section with some results showing that essential ideals are essentially the same as the entire *-semigroup, at least when it comes to *-annihilators.

\begin{dfn}
We call $T\subseteq S$ \emph{essential} if $T^\perp=\{0\}$.
\end{dfn}

\begin{prp}\label{ILTL}
If $I$ is an essential right ideal of a bi-hereditary *-subsemigroup $A\subseteq S$ then $I^\mathrm{L}=A^\mathrm{L}$, $I^\perp=A^\perp$ and $I^\triangledown=A^\triangledown$.
\end{prp}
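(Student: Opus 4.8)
The plan is to reduce everything to a single inclusion and then exploit essentiality. Since $I\subseteq A$, the polarity $^\perp$ (and likewise $^\mathrm{L}$ and $^\triangledown$) is order-reversing, so the inclusions $A^\mathrm{L}\subseteq I^\mathrm{L}$, $A^\perp\subseteq I^\perp$, $A^\triangledown\subseteq I^\triangledown$ are automatic; the content is the reverse inclusions. I would first establish $I^\mathrm{L}=A^\mathrm{L}$ and then deduce the other two formally: by \autoref{Lbotprp}\eqref{Lbotprp3}, $T^\perp=T^\mathrm{L}\cap T^{\mathrm{L}*}$, so $I^\perp=A^\perp$ follows immediately from $I^\mathrm{L}=A^\mathrm{L}$; and for $\triangledown$, note $I$ is a right ideal of $A$, so within $A$ we have $I^{\triangledown_A}=I^{\mathrm{L}_A}$ by \autoref{Lbotprp}\eqref{Lbotprp5}, and similarly $A^{\triangledown_A}=A^{\mathrm{L}_A}=A$; combined with the fact (from \autoref{triequiv}, since $A$ is bi-hereditary hence the ambient $\triangledown$ restricts — cf. \autoref{triT}) that the relevant annihilators computed in $A$ agree with those computed in $S$, this gives $I^\triangledown=A^\triangledown$. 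Actually it is cleaner to phrase the whole argument inside $A$, replacing $S$ by $A$: since $A$ is a bi-hereditary $*$-subsemigroup it is itself a proper $*$-semigroup with the same $0$, and ``essential'' means $I^{\perp_A}=\{0\}$.

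For the key inclusion $I^\mathrm{L}\subseteq A^\mathrm{L}$ (working in $A$), take $s\in I^\mathrm{L}$, so $0=is^*$ for all $i\in I$; I want $0=as^*$ for all $a\in A_+$. Suppose not, so $as^*\neq0$ for some $a\in A_+$. Then $s^*$ is not in $I^{\mathrm{L}*}$... more usefully, consider the left ideal (in $A$) generated by $a s^*$, or better: I claim $a$ witnesses a nonzero element of $A$ that fails to be annihilated, and I want to push it into $I$. The idea is that $I$ being an essential right ideal means $I^{\perp_A}=\{0\}$, so no nonzero annihilator-type element can be orthogonal to all of $I$. Concretely: since $a s^*\neq 0$ and $0$ is proper, $sa^*as^*\neq 0$, i.e. $s(a^*a)s^*\neq0$; set $p=a^*a\in A_+$, so $sps^*\neq0$. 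Now $I$ is an essential right ideal, and I want to find $i\in I$ with $s(\text{something built from }i)\neq 0$ contradicting $s\in I^\mathrm{L}$. The natural move: the right ideal $pA$ (or rather its closure under the operations) is nonzero since $p\neq 0$; if $pA\cap I\ne\{0\}$ whenever $pA\ne\{0\}$ we could proceed — this is where essentiality of $I$ as a right ideal must be invoked, presumably via the fact that an essential right ideal meets every nonzero right ideal, or via $T^{\perp_A}=\{0\}$ forcing $T$ to ``reach'' every nonzero piece.

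I expect the main obstacle to be exactly this last point: translating ``$I^{\perp_A}=\{0\}$'' into ``$I$ interacts nontrivially with the element $p$'' so that one produces $i\in I$ with $is^*\neq0$. The clean route is: since $s\notin A^\mathrm{L}$, the set $As^*\cap A$ (appropriately interpreted) generates a nonzero right-ideal-like object $R$ inside $A$ with $R\not\perp s$; if $R\cap I=\{0\}$ then $R$ would be contained in some nontrivial part of the ortholattice disjoint from $I$, contradicting $I^{\perp_A}=\{0\}$ via the fact that in $\mathscr{P}(A)^{\perp_A}$ (a complete ortholattice by \autoref{Lperpequiv}) an element with trivial orthocomplement is the top. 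Making ``$R\cap I\neq\{0\}$'' precise and then extracting the desired $i$ with $is^*\neq0$ — using \autoref{pq} and \autoref{s*st} to move the obstruction around — is the heart of the matter; once an $i\in I$ with $i s^* \neq 0$ is in hand, it directly contradicts $s\in I^\mathrm{L}$, completing the proof.
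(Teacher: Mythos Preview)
The proposal does not actually prove the key inclusion $I^\mathrm{L}\subseteq A^\mathrm{L}$; you explicitly flag the ``heart of the matter'' and leave it open. The missing mechanism is a concrete joint use of the two hypotheses you never put together: that $I$ is a \emph{right ideal} of $A$ and that $A$ is \emph{bi-hereditary}. Given $t\in I^\mathrm{L}$ and $a\in A$, form $y=a^*a\,t^*t\,a^*a$. Since $a^*a\in A_+$ and $t^*t\in S_+$, bi-hereditariness gives $y\in A$; and for any $u\in I$ we have $ua^*a\in I$ (right ideal), so $(ua^*a)t^*=0$ and hence $uy=0$. As $y$ is self-adjoint this puts $y\in A\cap I^\perp=I^{\perp_A}=\{0\}$; now $y=(ta^*a)^*(ta^*a)$ and properness give $ta^*a=0$, whence $a^*at^*=0$ and $at^*=0$ by \autoref{s*st}. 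This is exactly the paper's argument (stated there for $\triangledown$ first, with an extra $s\in S$ inserted in the middle, and then specialized to $\mathrm{L}$ and $\perp$ by dropping the $s$). Your suggested route via ``$pA\cap I\neq\{0\}$'' or a right ideal $R$ meeting $I$ never gets off the ground because ``essential'' here only says $I^{\perp_A}=\{0\}$, not that $I$ meets every nonzero right ideal of $A$; the sandwich $a^*a(\cdot)a^*a$ is precisely the device that converts the obstruction into an element of $A\cap I^\perp$.

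A secondary issue: your plan to deduce $I^\triangledown=A^\triangledown$ by passing to $A$ and invoking \autoref{triT} does not work as written. \autoref{triT} requires $A$ to be a bi-\emph{ideal}, not merely bi-hereditary, and ``similarly $A^{\triangledown_A}=A^{\mathrm{L}_A}=A$'' is false (both equal $\{0\}$). There is, however, a clean formal reduction available: $v\in T^\triangledown$ iff $v^*s^*\in T^\mathrm{L}$ for every $s\in S$, so $I^\mathrm{L}=A^\mathrm{L}$ does imply $I^\triangledown=A^\triangledown$ directly. The paper sidesteps this by running the same sandwich argument uniformly for all three annihilators.
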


\begin{proof}
If $s\in S$, $a\in A$, $u\in I$ and $v\in I^\triangledown$ then $ua^*a\in I$ and hence $ua^*asv=0=ua^*asvv^*s^*a^*a$.  As $u\in I$ was arbitrary, $a^*asvv^*s^*a^*a\in A\cap I^\perp=I^{\perp_A}=\{0\}$ so $asv=0$.  As $a\in A$ and $s\in S$ were arbitrary, $v\in A^\triangledown$.  As $v\in I^\triangledown$ was arbitrary, $I^\triangledown\subseteq A^\triangledown$ while $A^\triangledown\subseteq I^\triangledown$ is immediate from $I\subseteq A$.  This proves the last equality, and the others follow by essentially the same argument, just without the $s\in S$.
\end{proof}

\begin{prp}\label{IcapT}
If $I$ is an essential ideal of $S$ and $A$ is a bi-hereditary *-subsemigroup of $S$ then $A\cap I$ is essential in $A$.
\end{prp}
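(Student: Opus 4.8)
The plan is to deduce this almost immediately from \autoref{centhm}. Spelling out the definition, ``$A\cap I$ is essential in $A$'' means $(A\cap I)^{\perp_A}=\{0\}$, and since $\bot_A$ coincides with the restriction of $\bot$ to $A$ (as recorded in \S\ref{OR}, just before \autoref{triT}), this is the same as asking that $A\cap(A\cap I)^\perp=\{0\}$. So the real task is to identify $(A\cap I)^\perp$.

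The key step is to apply \autoref{centhm} with the ideal there taken to be $I$ itself, which is legitimate since $I$ is an ideal of $S$ and $A$ is a bi-hereditary *-subsemigroup. This yields $A^\perp=(A\cap I)^\perp\cap(A\cap I^\perp)^\perp$. Now essentiality of $I$ is precisely the statement $I^\perp=\{0\}$, so $A\cap I^\perp=\{0\}$ and hence $(A\cap I^\perp)^\perp=\{0\}^\perp=S$, the last equality because $0$ is absorbing. Therefore $A^\perp=(A\cap I)^\perp$.

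Putting the two facts together, $(A\cap I)^{\perp_A}=A\cap(A\cap I)^\perp=A\cap A^\perp$, and $A\cap A^\perp=\{0\}$ since any $x$ in it satisfies $x\bot x$, whence $xx^*=0$ and so $x=0$ by properness. The only points that need a second glance are the identification of $\bot_A$ with the restriction of $\bot$, the equality $\{0\}^\perp=S$, and the harmless fact that $0\in A$, which holds for any nonempty bi-hereditary *-subsemigroup because $t0t=0\in A$ for $t\in A_+$; so I do not anticipate any genuine obstacle, the whole argument being essentially a bookkeeping exercise once \autoref{centhm} is invoked correctly.

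If a more self-contained argument is wanted, one can instead imitate the proof of \autoref{ILTL}: given $a\in(A\cap I)^{\perp_A}$ and $b\in I$, the element $c=a^*ab^*ba^*a$ lies in $A$ (as $A$ is bi-hereditary and $a^*a\in A_+$) and in $I$ (as $I$ is an ideal and $b\in I$), and since $c$ is self-adjoint, $a\bot c$ forces $ac=0$, i.e. $aa^*ab^*ba^*a=0$; a short manipulation using \autoref{pq} reduces this to $a^*ab^*ba^*a=0$, which is $(a^*ab^*)(a^*ab^*)^*$, so $a^*ab^*=0$ and hence $ab^*=0$ by \autoref{s*st}, and running the same argument with $bb^*$ in place of $b^*b$ gives $ab=0$ as well, so $a\in I^\perp=\{0\}$. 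I expect the route through \autoref{centhm} to be shorter and cleaner, so that is the one I would write up.
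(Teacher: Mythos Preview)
Your main argument via \autoref{centhm} is correct and takes a genuinely different route from the paper. The paper argues directly by contrapositive: given nonzero $a\in A$, essentiality of $I$ provides $s\in I$ with $sa\neq0$, and then the element $aa^*s^*saa^*$ lies in $A\cap I$ (by bi-hereditarity of $A$ and the ideal property of $I$) while $aa^*s^*saa^*a\neq0$, so $a\notin(A\cap I)^{\perp_A}$. This is essentially your ``self-contained'' alternative, in contrapositive form. Your approach through \autoref{centhm} trades that elementary computation for a one-line deduction, and nicely exhibits the result as a degenerate case of the central decomposition; the paper's version, by contrast, keeps the proposition independent of \autoref{centhm} and shows that nothing beyond properness and bi-hereditarity is really at work.

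One small remark on your alternative sketch, should you ever write it up: from $ab^*=0$ and $ab=0$ for all $b\in I$ you conclude $a\in I^\perp$, but $\perp$ is not symmetric, so what you have is $a\perp b$ rather than $b\perp a$. The conclusion is still correct, since $ab^*=0$ for all $b\in I$ says exactly $a\in I^\mathrm{L}$, and $I^\mathrm{L}=I^\perp$ by \autoref{Lbotprp}\eqref{Lbotprp5} because $I$ is a right ideal; but the step deserves that one extra sentence.
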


\begin{proof}
For any $a\in A\setminus\{0\}$ we have $s\in I$ with $sa\neq0$ so $aa^*s^*saa^*a\neq0$ and $aa^*s^*saa^*\in A\cap I$ so $a\notin(A\cap I)^{\perp_A}$, i.e. $(A\cap I)^{\perp_A}=\{0\}$.
\end{proof}

\begin{thm}\label{ess}
If $I=I^*$ is an essential ideal of $S$ then $B\mapsto B^{\perp\perp}$ and $A\mapsto A\cap I$ are mutually inverse orthoisomorphisms witnessing \[\mathscr{P}(I)^{\perp_I}\cong\mathscr{P}(S)^\perp\quad\textrm{and}\quad\mathscr{P}(I)^{\triangledown_I}\cong\mathscr{P}(S)^\triangledown.\]
\end{thm}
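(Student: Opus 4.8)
The plan is to assemble the claimed orthoisomorphisms from the machinery already in place: Theorem~\ref{relcen} (with $A=I$, which is a self-adjoint bi-ideal since $I=I^*$ is an ideal), Theorem~\ref{comann}'s argument for $\perp$, Proposition~\ref{ILTL} and Proposition~\ref{IcapT}. The two maps to check are $A\mapsto A\cap I$ on $\mathscr{P}(S)^\perp$ and $B\mapsto B^{\perp\perp}$ on $\mathscr{P}(I)^{\perp_I}$ (and the same pair with $\triangledown$ in place of $\perp$).

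First I would verify that $A\mapsto A\cap I$ lands in the right place. Since $I$ is a bi-ideal it is in particular bi-hereditary, so the argument used to prove Theorem~\ref{comann} (which only invoked commutativity at the step $caa^*b^*baa^*=aa^*cb^*baa^*$; but here we do not need that step because $I$ is an \emph{ideal}, so $aa^*b^*baa^*\in I^\perp\cap I$ forces it to $0$ directly, or more simply because $I\cap I^\perp=\{0\}$ is already an ideal) shows $A\cap I=(A\cap I)^{\perp_I\perp_I}\in\mathscr{P}(I)^{\perp_I}$; the $\triangledown$ version is exactly the first paragraph of the proof of Theorem~\ref{relcen}. Next, that the maps are mutually inverse: for $A\in\mathscr{P}(S)^\perp$, the right ideal $A\cap I$ is essential in $A^{\perp\perp}$ --- indeed by Proposition~\ref{IcapT} applied with the bi-hereditary *-subsemigroup $A^{\perp\perp}$ in place of $A$, using that $I$ is essential in $S$ --- so Proposition~\ref{ILTL} gives $(A\cap I)^\perp=(A^{\perp\perp})^\perp=A^\perp$, whence $(A\cap I)^{\perp\perp}=A^{\perp\perp}=A$; and for the $\triangledown$ case the identical chain with $\triangledown$ replacing $\perp$ works since Proposition~\ref{ILTL} covers it. Conversely, for $B\in\mathscr{P}(I)^{\perp_I}$ one wants $I\cap B^{\perp\perp}=B$: we already know $I\cap B^{\perp\perp}\in\mathscr{P}(I)^{\perp_I}$ and $B\subseteq I\cap B^{\perp\perp}$, and by Proposition~\ref{triT} (noting $B$ is itself a right ideal, so $\perp_I$ agrees with the restriction of $\perp$, and similarly for $\triangledown_I$ via Proposition~\ref{triT}) we get $I\cap B^{\perp\perp}\cap B^{\perp_I}\subseteq B^{\perp\perp}\cap B^\perp=\{0\}$, so applying orthomodularity inside the ortholattice $[\{0\},I]=\mathscr{P}(I)^{\perp_I}$ --- i.e. \autoref{annidealcentre} applied in $I$, exactly as at the end of the proof of Theorem~\ref{comann} --- forces $I\cap B^{\perp\perp}=B$.

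Finally I would note that $A\mapsto A\cap I$ is manifestly order-preserving with order-preserving inverse $B\mapsto B^{\perp\perp}$, hence an order isomorphism, and that it intertwines the orthocomplements: $(A\cap I)^{\perp_I}=I\cap(A\cap I)^\perp=I\cap A^\perp=A^\perp\cap I$ (the middle equality again by Proposition~\ref{ILTL} since $A\cap I$ is essential in $A$, or directly since $I\cap(A\cap I)^\perp\subseteq(A\cap I)^{\perp_I}$ and the reverse holds by essentiality), so it is an orthoisomorphism $\mathscr{P}(I)^{\perp_I}\cong\mathscr{P}(S)^\perp$; the $\triangledown$ statement follows because the same map restricts correctly, using that $\mathscr{P}(I)^{\triangledown_I}\subseteq\mathscr{P}(I)^{\perp_I}$ corresponds under the isomorphism to the self-adjoint (equivalently, right-ideal) members, which are precisely $\mathscr{P}(S)^\triangledown$ by \autoref{Lperpequiv}.

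The main obstacle I anticipate is bookkeeping around which relation ($\perp_I$ versus the restriction of $\perp$, and $\triangledown_I$ versus $\triangledown$) one is working with inside $I$: these coincide only because $I$ is a bi-ideal (Proposition~\ref{triT}) and because the relevant sets are right ideals, and getting Proposition~\ref{ILTL} to apply requires first knowing $A\cap I$ is essential in $A^{\perp\perp}$ rather than merely in $A$, which is where Proposition~\ref{IcapT} must be invoked with some care about which *-subsemigroup plays the role of $A$.
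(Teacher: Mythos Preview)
Your overall plan---reduce everything to \autoref{ILTL} and \autoref{IcapT}---is the paper's plan too, and the forward direction (showing $(A\cap I)^{\perp\perp}=A$ for $A\in\mathscr{P}(S)^\perp$) is fine. The gap is in your ``Conversely'' step for the $\perp$-case. You argue that $B\subseteq I\cap B^{\perp\perp}$, that $(I\cap B^{\perp\perp})\cap B^{\perp_I}=\{0\}$, and then invoke ``orthomodularity inside $\mathscr{P}(I)^{\perp_I}$, i.e.\ \autoref{annidealcentre} applied in $I$, exactly as at the end of \autoref{comann}.'' But that step in \autoref{comann} only works because $A$ there is \emph{commutative}, so $\mathscr{P}(A)^{\perp_A}=\mathscr{P}(A)^{\triangledown_A}$ and every $J$ is central; \autoref{annidealcentre} then gives the decomposition $K=(K\cap J)\vee(K\cap J^{\perp_A})$. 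Here $I$ is not assumed commutative, so an arbitrary $B\in\mathscr{P}(I)^{\perp_I}$ need not lie in $\mathscr{P}(I)^{\triangledown_I}$ and need not be central, and $\mathscr{P}(I)^{\perp_I}$ need not be orthomodular (the paper explicitly notes non-orthomodular examples after \autoref{comann}). So from $B\subseteq I\cap B^{\perp\perp}$ and $(I\cap B^{\perp\perp})\cap B^{\perp_I}=\{0\}$ you cannot conclude $B=I\cap B^{\perp\perp}$. (Your aside that ``$B$ is itself a right ideal'' is also not quite right: $B$ is only a quasi-ideal of $I$ in general.) The same objection applies to your first paragraph's adaptation of the \autoref{comann} argument to show $A\cap I\in\mathscr{P}(I)^{\perp_I}$: the claim ``$aa^*b^*baa^*\in I^\perp\cap I$'' has no justification, since $b\in A^\perp$, not $I^\perp$.

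The paper sidesteps all of this by a direct computation that never appeals to centrality or orthomodularity: for $B\in\mathscr{P}(I)^{\perp_I}$ one has $B^{\perp_I}=B^\perp\cap I$, and applying \autoref{IcapT} with the bi-hereditary $*$-subsemigroup $B^\perp$ shows $B^\perp\cap I$ is essential in $B^\perp$; since it is also a right ideal of $B^\perp$ (using that $B^\perp$ is a subsemigroup and $I$ is an ideal), \autoref{ILTL} gives $(B^\perp\cap I)^\perp=B^{\perp\perp}$, i.e.\ $B^{\perp_I\perp}=B^{\perp\perp}$. Hence $B=B^{\perp_I\perp_I}=B^{\perp_I\perp}\cap I=B^{\perp\perp}\cap I$ immediately, and the same identity applied to $B^{\perp_I}$ yields $(B^{\perp_I})^{\perp\perp}=(B^{\perp\perp})^\perp$, so the map respects orthocomplements. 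The $\triangledown$-case is then the same computation with $\triangledown$ in place of $\perp$ (or, as you note, it already follows from \autoref{relcen} since $I^{\triangledown\triangledown}=S$). The missing idea in your attempt is to apply \autoref{ILTL}/\autoref{IcapT} to $B^\perp$ rather than to $B$.
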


\begin{proof}
If $B\in\mathscr{P}(I)^{\perp_I}$ then $B^{\perp_I\perp}=(B^\perp\cap I)^\perp=B^{\perp\perp}$, by \autoref{ILTL} and \autoref{IcapT}.  Thus $B=B^{\perp_I\perp_I}=B^{\perp_I\perp}\cap I=B^{\perp\perp}\cap I$ and $(B^{\perp_I})^{\perp\perp}=(B^{\perp\perp})^\perp$.  On the other hand, if $A\in\mathscr{P}(S)^\perp$ then $(A\cap I)^\perp=A^\perp$, again by \autoref{ILTL} and \autoref{IcapT}.  Thus $A=A^{\perp\perp}=(A\cap I)^{\perp\perp}$ and $A^\perp\cap I=(A\cap I)^\perp\cap I=(A\cap I)^{\perp_I}$.  So the given maps do indeed witness $\mathscr{P}(I)^{\perp_I}\cong\mathscr{P}(S)^\perp$ and essentially the same argument shows that they also witness $\mathscr{P}(I)^{\triangledown_I}\cong\mathscr{P}(S)^\triangledown$.
\end{proof}

\section{*-Equivalence}\label{eq}

The present section is devoted to the binary relation $\sim$ on $\mathscr{P}(S)^{\perp}$ defined by
\[A\sim B\quad\Leftrightarrow\quad\exists s\in S(\{s\}^{\perp\perp}=A\textrm{ and }\{s^*\}^{\perp\perp}=B).\]
The importance of $\sim$ can be seen from the fact it extends Murray-von Neumann equivalence for projections in a von Neumann algebra $S$, i.e. the binary relation $\sim_\mathrm{MvN}$ on $S_\mathrm{proj}$ defined by
\[p\sim_\mathrm{MvN}q\quad\Leftrightarrow\quad\exists s\in S(s^*s=p\textrm{ and }ss^*=q).\]
In fact, we show in \autoref{MvN} that $\sim$ generalizes $\sim_\mathrm{MvN}$ for certain proper *-semigroups $S$ satisfying \emph{polar decomposition}, meaning that, for all $a\in S$, we have $b\in S_\mathrm{sa}$ with $a^*a=b^2$ and $a\in Sb$ (this is a weakening of \cite{Berberian1972} \S21 Definition 1).  As is well-known, von Neumann algebras have polar decomposition, but so too do AW*-algebras, Rickart C*-algebras (see \cite{AraPere1993}) and all quotients of these (e.g. the Calkin algebra).  These C*-algebras also satisfy the following (see \autoref{add}).

\begin{dfn}
A *-semigroup $S$ is \emph{$\perp$-cancellative} if, for all $a,b,s,t\in S$,
\[\{a\}^\perp=\{b\}^\perp\textrm{ and }as=at\quad\Rightarrow\quad bs=bt.\]
\end{dfn}

\begin{lem}\label{appb}\label{simlem}
For $a,b\in S$, we have $\{ab\}^{\perp\perp}=(\{a\}^{\perp\perp}b)^{\perp\perp}$ and if $\{a\}^{\perp\perp}\subseteq\{b^*\}^{\perp\perp}$ then $\{b^*a^*\}^{\perp\perp}=\{a^*\}^{\perp\perp}$, i.e. $ab$ witnesses $\{a^*\}^{\perp\perp}\sim(\{a\}^{\perp\perp}b)^{\perp\perp}$.
\end{lem}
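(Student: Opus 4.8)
The statement bundles two annihilator identities, the first unconditional and the second conditional, and the closing ``i.e.'' is merely their restatement in terms of $\sim$. The plan is to prove both by comparing the sets $\{ab\}^\perp$, $(\{a\}^{\perp\perp}b)^\perp$, etc.\ directly. The one genuinely delicate feature is that $\perp$ is \emph{not} symmetric on $S$, so $a$ itself need not lie in $\{a\}^{\perp\perp}$; the substitute I will use throughout is that the \emph{positive} element $a^*a$ always does lie in $\{a\}^{\perp\perp}$, since every $u\in\{a\}^\perp$ satisfies $ua^*=0$ (the adjoint of $au^*=0$) and hence $ua^*a=0$, so $u\perp a^*a$.

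For $\{ab\}^{\perp\perp}=(\{a\}^{\perp\perp}b)^{\perp\perp}$ it suffices to show $\{ab\}^\perp=(\{a\}^{\perp\perp}b)^\perp$. The inclusion $(\{a\}^{\perp\perp}b)^\perp\subseteq\{ab\}^\perp$ uses only $a^*a\in\{a\}^{\perp\perp}$: if $s$ kills $\{a\}^{\perp\perp}b$ in the $\perp$ sense then $a^*abs=0=a^*abs^*$, and a single application of \autoref{s*st} (reading $a^*abs$ as $a^*(a(bs))$, so that $a^*abs=0\Rightarrow abs=0$, similarly with $s^*$) gives $abs=0=abs^*$, i.e.\ $s\in\{ab\}^\perp$. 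For the reverse inclusion I would route through the \emph{symmetric} relation $\mathrm{L}$: a direct computation from the definitions shows $\{ab\}^\mathrm{L}=(\{a\}^{\mathrm{L}\mathrm{L}}b)^\mathrm{L}$ (the inclusion $\supseteq$ uses $a\in\{a\}^{\mathrm{L}\mathrm{L}}$; for $\subseteq$, $abs^*=0$ says $sb^*\in\{a\}^\mathrm{L}$, so every $c\in\{a\}^{\mathrm{L}\mathrm{L}}$ has $sb^*c^*=0$, whose adjoint is $cbs^*=0$), and then \autoref{Lbotprp}\eqref{Lbotprp3} (which gives $T^\perp=T^\mathrm{L}\cap T^{\mathrm{L}*}$ for every $T$) upgrades this to $\{ab\}^\perp=(\{a\}^{\mathrm{L}\mathrm{L}}b)^\perp$; since $\{a\}^{\perp\perp}\subseteq\{a\}^{\mathrm{L}\mathrm{L}}$ by \autoref{Lbotprp}\eqref{Lbotprp2}, \eqref{Lbotprp3} and \eqref{Lbotprp4}, this yields $\{ab\}^\perp\subseteq(\{a\}^{\perp\perp}b)^\perp$.

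For the second identity, applying the first with $a,b$ replaced by $b^*,a^*$ gives $\{b^*a^*\}^{\perp\perp}=(\{b^*\}^{\perp\perp}a^*)^{\perp\perp}$, so it remains to show $(\{b^*\}^{\perp\perp}a^*)^\perp=\{a^*\}^\perp$. The inclusion $\supseteq$ is immediate, since $a^*s^*=0=a^*s$ forces $ca^*s^*=0=ca^*s$ for every $c$. For $\subseteq$ the hypothesis enters exactly once: $a^*a\in\{a\}^{\perp\perp}\subseteq\{b^*\}^{\perp\perp}$, so any $s$ in the left-hand side satisfies $a^*aa^*s^*=0=a^*aa^*s$, and applying \autoref{s*st} twice -- first to cancel $a^*a$ and get $aa^*s^*=0=aa^*s$, then, using $aa^*=(a^*)^*a^*$, to cancel $aa^*$ and get $a^*s^*=0=a^*s$ -- shows $s\in\{a^*\}^\perp$.

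Finally, with $s=ab$ the two identities read $\{s\}^{\perp\perp}=(\{a\}^{\perp\perp}b)^{\perp\perp}$ and $\{s^*\}^{\perp\perp}=\{b^*a^*\}^{\perp\perp}=\{a^*\}^{\perp\perp}$, so $ab$ witnesses $(\{a\}^{\perp\perp}b)^{\perp\perp}\sim\{a^*\}^{\perp\perp}$, equivalently $\{a^*\}^{\perp\perp}\sim(\{a\}^{\perp\perp}b)^{\perp\perp}$ (with $s^*=b^*a^*$ witnessing the reverse, as $\sim$ is symmetric). The main obstacle is precisely the non-symmetry of $\perp$ on $S$: one cannot feed $a$ itself into any $\perp$-condition -- only positive elements such as $a^*a$ -- one must keep careful track of which of $abs$, $abs^*$, $cbs$, $cbs^*$, $sb^*c^*$, $\ldots$ is being claimed to vanish at each step, and one is forced to handle the ``hard'' inclusions through the symmetric companion $\mathrm{L}$ together with the bridge $T^\perp=T^\mathrm{L}\cap T^{\mathrm{L}*}$ of \autoref{Lbotprp}.
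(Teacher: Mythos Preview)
Your argument is correct, but the paper's proof takes a markedly different and shorter route. Rather than detouring through the symmetric relation $\mathrm{L}$, the paper exploits the fact (from \autoref{Lbotprp}\eqref{Lbotprp3} and \autoref{pospart}) that every $T^\perp$ is determined by its positive part, so it suffices to test membership on $s\in S_+$, where $s=s^*$ collapses the two conditions in $\perp$ to one. For the inclusion $\{ab\}^\perp\subseteq(\{a\}^{\perp\perp}b)^\perp$, the paper takes $s\in\{ab\}^\perp_+$, notes that $abs=0$ forces $absb^*=0$, hence the self-adjoint element $bsb^*$ lies in $\{a\}^\perp$, so $cbsb^*=0$ for every $c\in\{a\}^{\perp\perp}$, and properness then gives $cbs=0$. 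For the second identity the paper does \emph{not} reduce to the first: it takes $s\in\{b^*a^*\}^\perp_+$ and observes directly that $a^*sa\in\{b^*\}^\perp\cap\{a\}^{\perp\perp}\subseteq\{b^*\}^\perp\cap\{b^*\}^{\perp\perp}=\{0\}$, whence $a^*s=0$.

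Your approach has the virtue of making the $\mathrm{L}$--$\perp$ bridge of \autoref{Lbotprp} do real work, and your reduction of the second identity to the first via the substitution $(a,b)\mapsto(b^*,a^*)$ is a nice structural observation. The paper's approach, by contrast, is more self-contained and avoids the chain of citations to \autoref{Lbotprp}\eqref{Lbotprp2}--\eqref{Lbotprp4}; its key device---conjugating by $b$ or $a$ to manufacture a self-adjoint element that visibly lies in the relevant $^\perp$ or $^{\perp\perp}$---is a trick worth internalizing, as it recurs in later proofs such as \autoref{relcen} and \autoref{comann}.
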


\begin{proof}
If $s\in\{ab\}^\perp_+$ then $absb^*=0$ so we have $bsb^*\in\{a\}^\perp=(\{a\}^{\perp\perp})^\perp$ and hence $s\in(\{a\}^{\perp\perp}b)^\perp$.  As $s\in\{ab\}^\perp_+$ was aribitrary, $\{ab\}^\perp\subseteq(\{a\}^{\perp\perp}b)^\perp$.  As $a^*a\in\{a\}^{\perp\perp}$, \autoref{s*st} yields $(\{a\}^{\perp\perp}b)^\perp\subseteq\{a^*ab\}^\perp=\{ab\}^\perp$ so we have $(\{a\}^{\perp\perp}b)^{\perp\perp}=\{ab\}^{\perp\perp}$.

Now if $\{a\}^{\perp\perp}\subseteq\{b^*\}^{\perp\perp}$ and $s\in\{b^*a^*\}^{\perp}_+$ then $b^*a^*sa=0$ so \[a^*sa\in\{b^*\}^\perp\cap\{a\}^{\perp\perp}\subseteq\{b^*\}^\perp\cap\{b^*\}^{\perp\perp}=\{0\}.\]  Thus $a^*s=0$ and $s\in\{a^*\}^\perp$.  As $s\in\{b^*a^*\}^{\perp}_+$ was arbitrary, $\{b^*a^*\}^\perp\subseteq\{a^*\}^\perp$, while $\{a^*\}^\perp\subseteq\{b^*a^*\}^\perp$ is immediate, so $\{b^*a^*\}^{\perp\perp}=\{a^*\}^{\perp\perp}$.
\end{proof}

We can now give the following generalization of \cite{Berberian1972} \S21 Proposition 3.

\begin{prp}\label{MvN}
If $S$ is a proper $\perp$-cancellative *-semigroup with polar decomposition and $r\mapsto\{r\}^{\perp\perp}$ is injective on $S_\mathrm{proj}$ then, for all $p,q\in S_\mathrm{proj}$, \[p\sim_\mathrm{MvN}q\quad\Leftrightarrow\quad\{p\}^{\perp\perp}\sim\{q\}^{\perp\perp}.\]
\end{prp}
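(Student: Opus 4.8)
The plan is to prove the two implications separately; the forward one will be essentially immediate and all the real work lies in the converse. The single elementary fact I will use repeatedly is that $\{x\}^\perp=\{x^*x\}^\perp$ for every $x\in S$: by \autoref{s*st} one has $xy=0\Leftrightarrow x^*xy=0$ and $xy^*=0\Leftrightarrow x^*xy^*=0$, so $\{x\}^\perp$ and $\{x^*x\}^\perp$ are literally the same subset of $S$, hence $\{x\}^{\perp\perp}=\{x^*x\}^{\perp\perp}$ and, applying this to $x^*$, also $\{x^*\}^{\perp\perp}=\{xx^*\}^{\perp\perp}$. For the forward direction, if $p\sim_\mathrm{MvN}q$ via $s$ (so $s^*s=p$ and $ss^*=q$), these identities immediately give $\{s\}^{\perp\perp}=\{p\}^{\perp\perp}$ and $\{s^*\}^{\perp\perp}=\{q\}^{\perp\perp}$, so the same $s$ witnesses $\{p\}^{\perp\perp}\sim\{q\}^{\perp\perp}$; this needs only properness.

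For the converse, fix $s$ witnessing $\{p\}^{\perp\perp}\sim\{q\}^{\perp\perp}$, so $\{s\}^{\perp\perp}=\{p\}^{\perp\perp}$ and $\{s^*\}^{\perp\perp}=\{q\}^{\perp\perp}$, and apply polar decomposition to $s$: choose $b\in S_\mathrm{sa}$ and $u\in S$ with $b^2=s^*s$ and $s=ub$. The key preliminary observation is that the annihilator of $b$ is pinned down, namely $\{b\}^\perp=\{b^2\}^\perp=\{s^*s\}^\perp=\{s\}^\perp=\{p\}^\perp$, where the last equality follows from $\{s\}^{\perp\perp}=\{p\}^{\perp\perp}$ and the standard polarity identity $T^{\perp\perp\perp}=T^\perp$; in particular $\{b\}^{\perp\perp}=\{p\}^{\perp\perp}$ too.

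Now I claim the element $t:=up$ witnesses $p\sim_\mathrm{MvN}q$. From $s=ub$ and $b=b^*$ one gets $s^*s=bu^*ub$, i.e. $b\,(u^*ub)=b\,b$; since $\{b\}^\perp=\{p\}^\perp$, $\perp$-cancellation replaces the leading factor $b$ by $p$, giving $pu^*ub=pb$, and taking adjoints $bu^*up=bp$; one more application of $\perp$-cancellation (again replacing the leading $b$ by $p$) yields $pu^*up=pp=p$, so $t^*t=pu^*up=p$. Since $tp=up^2=up=t$, we have $(tt^*)^2=t(t^*t)t^*=tpt^*=(tp)t^*=tt^*$, so $tt^*$ is a self-adjoint idempotent, hence a projection. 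To identify it, use $t^*=pu^*$, the first identity of \autoref{simlem}, and $\{p\}^{\perp\perp}=\{b\}^{\perp\perp}$:
\[\{t^*\}^{\perp\perp}=\{pu^*\}^{\perp\perp}=(\{p\}^{\perp\perp}u^*)^{\perp\perp}=(\{b\}^{\perp\perp}u^*)^{\perp\perp}=\{bu^*\}^{\perp\perp}=\{s^*\}^{\perp\perp}=\{q\}^{\perp\perp}.\]
Hence $\{tt^*\}^{\perp\perp}=\{t^*\}^{\perp\perp}=\{q\}^{\perp\perp}$, and since $tt^*$ and $q$ are both projections, injectivity of $r\mapsto\{r\}^{\perp\perp}$ on $S_\mathrm{proj}$ forces $tt^*=q$. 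Thus $t^*t=p$ and $tt^*=q$, i.e. $p\sim_\mathrm{MvN}q$.

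The step I expect to be the main obstacle is precisely the extraction of the exact identities $t^*t=p$ and $tt^*=q$ from the bare polar decomposition $s=ub$: here $u$ is merely some element of $S$ with no a priori relationship to $p$ or $q$, and without a continuous functional calculus one cannot simply ``renormalise'' $u$ into a partial isometry. The device that resolves this is the two applications of $\perp$-cancellation with an adjoint step in between, converting $bu^*ub=b^2$ into $pu^*up=p$; keeping track of which side $\perp$-cancellation may be applied on is the one genuinely delicate point, everything else being routine manipulation of annihilators via \autoref{s*st} and \autoref{simlem}.
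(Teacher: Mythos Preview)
Your proof is correct and follows essentially the same route as the paper's: the same polar decomposition $s=ub$, the same candidate partial isometry $t=up$, the same two applications of $\perp$-cancellation (with an adjoint in between) to obtain $pu^*up=p$, and the same chain of annihilator identities via \autoref{simlem} to identify $\{t^*\}^{\perp\perp}$ with $\{q\}^{\perp\perp}$. The only difference is that you spell out the double cancellation step explicitly, whereas the paper compresses it into a single line.
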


\begin{proof}
If $s^*s=p$ and $ss^*=q$ then $\{s\}^{\perp\perp}=\{s^*s\}^{\perp\perp}=\{p\}^{\perp\perp}$ and $\{s^*\}^{\perp\perp}=\{ss^*\}^{\perp\perp}=\{q\}^{\perp\perp}$, i.e. if $s$ witnesses $p\sim_\mathrm{MvN}q$, it also witnesses $\{p\}^{\perp\perp}\sim\{q\}^{\perp\perp}$.

Now say $\{s\}^{\perp\perp}=\{p\}^{\perp\perp}$ and $\{s^*\}^{\perp\perp}=\{q\}^{\perp\perp}$.  By polar decomposition, we have $u\in S$ and $t\in S_\mathrm{sa}$ with $s^*s=t^2$ and $s=ut$.  As $tu^*ut=s^*s=t^2$ and $\{t\}^\perp=\{t^2\}^\perp=\{s^*s\}^\perp=\{s\}^\perp=\{p\}^\perp$, $\perp$-cancellativity yields $p=pp=pu^*up=w^*w$, where $w=up$.  Thus $w^*ww^*=w^*$ and hence $ww^*ww^*=ww^*$ so $ww^*\in S_\mathrm{proj}$.  By \autoref{appb}, $\{q\}^{\perp\perp}=\{s^*\}^{\perp\perp}=\{tu^*\}^{\perp\perp}=\{\{t\}^{\perp\perp}u^*\}^{\perp\perp}=\{\{p\}^{\perp\perp}u^*\}^{\perp\perp}=\{pu^*\}^{\perp\perp}=\{w^*\}^{\perp\perp}=\{ww^*\}^{\perp\perp}$ so the injectivity of $r\mapsto\{r\}^{\perp\perp}$ on $S_\mathrm{proj}$ gives $q=ww^*$, i.e. $w$ witnesses $p\sim_\mathrm{MvN}q$.
\end{proof}

Next we show that $\sim$ shares some well-known properties of $\sim_\mathrm{MvN}$ even in much more general *-semigroups.  The first fundamental such property is transitivity.

Note we write $A\precsim B$ if $A\sim C\subseteq B$, for some $C\in\mathscr{P}(S)^{\perp}$.

\begin{thm}\label{simtran}
The relations $\sim$ and $\precsim$ are transitive.  They are reflexive iff
\begin{equation}\label{simtraneq}
\mathscr{P}(S)^\perp=\{\{s\}^\perp:s\in S\}.
\end{equation}
\end{thm}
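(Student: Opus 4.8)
The plan is to derive everything from \autoref{appb}, which is the only substantial tool at hand; no hypothesis beyond the standing assumption that $S$ is proper is needed. For transitivity of $\sim$, suppose $A\sim B$ via $s$ and $B\sim C$ via $t$, so that $\{s\}^{\perp\perp}=A$, $\{s^*\}^{\perp\perp}=B=\{t\}^{\perp\perp}$ and $\{t^*\}^{\perp\perp}=C$. I would take $ts$ (equivalently its adjoint $s^*t^*$) as the witness for $A\sim C$, mimicking the composition of partial isometries. Since $\{s^*\}^{\perp\perp}=B=\{t\}^{\perp\perp}$, \autoref{appb} with $a=s^*$ and $b=t^*$ says $s^*t^*$ witnesses $\{s\}^{\perp\perp}\sim(\{s^*\}^{\perp\perp}t^*)^{\perp\perp}=(Bt^*)^{\perp\perp}$, so it only remains to show $(Bt^*)^{\perp\perp}=\{t^*\}^{\perp\perp}=C$. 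This is a short polarity computation: $Bt^*\subseteq\{t^*\}^{\perp\perp}$ because anything $t^*$ annihilates is annihilated by $bt^*$ for every $b$, whereas $t\in B=\{t\}^{\perp\perp}$ gives $tt^*\in Bt^*$ together with $\{tt^*\}^{\perp\perp}=\{t^*\}^{\perp\perp}$, the latter from \autoref{s*st}.

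For transitivity of $\precsim$, suppose $A\sim A'\subseteq B$ via $s$ and $B\sim B'\subseteq C$ via $t$. Here we only need $\{s^*\}^{\perp\perp}=A'\subseteq B=\{t\}^{\perp\perp}$, which is precisely the hypothesis of \autoref{appb} with $a=s^*$, $b=t^*$; so $s^*t^*$ witnesses $A=\{s\}^{\perp\perp}\sim(\{s^*\}^{\perp\perp}t^*)^{\perp\perp}=(A't^*)^{\perp\perp}$, and since $A'\subseteq B$ the computation of the previous paragraph gives $(A't^*)^{\perp\perp}\subseteq(Bt^*)^{\perp\perp}=\{t^*\}^{\perp\perp}=B'\subseteq C$. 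Hence $A\sim(A't^*)^{\perp\perp}\subseteq C$, that is $A\precsim C$; transitivity of $\sim$ is the special case $A'=B$, $B'=C$.

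For the reflexivity claim I would prove the cycle: \eqref{simtraneq} implies $\sim$ is reflexive, which implies $\precsim$ is reflexive, which in turn implies \eqref{simtraneq}. Assuming \eqref{simtraneq}, any $A\in\mathscr{P}(S)^\perp$ has $A^\perp=\{r\}^\perp$ for some $r\in S$, so $A=\{r\}^{\perp\perp}$; then $r^*r$ is self-adjoint and $\{r^*r\}^{\perp\perp}=\{r\}^{\perp\perp}=A$ by \autoref{s*st}, so $r^*r$ witnesses $A\sim A$. Reflexivity of $\sim$ gives reflexivity of $\precsim$ immediately, by taking the intermediate *-annihilator to be $A$ itself. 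Conversely, if $\precsim$ is reflexive then every $A\in\mathscr{P}(S)^\perp$ admits some $s\in S$ with $\{s\}^{\perp\perp}=A$; applying this to $A=T^{\perp\perp}$ for arbitrary $T\subseteq S$ gives $T^\perp=\{s\}^\perp$, which is \eqref{simtraneq}.

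There is no real obstacle here; every computation is one or two lines. The only points requiring care are the choice of witness $ts$ (equivalently $s^*t^*$) for the composite relations, and the recognition that \autoref{appb} is tailored exactly to collapse the awkward expression $(\{s^*\}^{\perp\perp}t^*)^{\perp\perp}$ to the tractable *-annihilator $\{t^*\}^{\perp\perp}$ (respectively $(A't^*)^{\perp\perp}$). For the reflexivity characterization, the crux is simply that \eqref{simtraneq} forces every *-annihilator to be of the form $\{r\}^{\perp\perp}$, after which $r^*r$ supplies a self-adjoint — hence automatically two-sided — witness.
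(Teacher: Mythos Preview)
Your approach is essentially the paper's: both take the product $ts$ of the two witnesses and invoke \autoref{appb} to identify $\{ts\}^{\perp\perp}$ and $\{s^*t^*\}^{\perp\perp}$, and your reflexivity argument is verbatim the paper's.

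There is, however, one genuine slip. In proving $(Bt^*)^{\perp\perp}\supseteq\{t^*\}^{\perp\perp}$ you assert $t\in B=\{t\}^{\perp\perp}$, and this is \emph{false} in general. In $M_2$ with $t=e_{12}$ one has $\{t\}^\perp=\mathbb{C}e_{11}$ and hence $\{t\}^{\perp\perp}=\mathbb{C}e_{22}$, which does not contain $t$. (The relation $\perp$ is not symmetric, so there is no reason for $t\in\{t\}^{\perp\perp}$.) The repair is immediate: what \emph{is} always true is $t^*t\in\{t\}^{\perp\perp}=B$, so $t^*tt^*\in Bt^*$ and $\{t^*tt^*\}^{\perp\perp}=\{t^*\}^{\perp\perp}$ by \autoref{s*st}; alternatively, the first identity of \autoref{appb} with $a=t^*t$, $b=t^*$ gives $(Bt^*)^{\perp\perp}=\{t^*tt^*\}^{\perp\perp}$ in one stroke.

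The paper itself sidesteps this computation entirely by applying \autoref{appb} a \emph{second} time with the roles reversed: in the $\sim$ case one has $\{t\}^{\perp\perp}=B=\{s^*\}^{\perp\perp}$, so the lemma with $a=t$, $b=s$ yields $\{s^*t^*\}^{\perp\perp}=\{t^*\}^{\perp\perp}=C$ directly, without ever naming $(Bt^*)^{\perp\perp}$. This is marginally cleaner than your route, but once your one-line error is fixed the two arguments are interchangeable.
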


\begin{proof}
Say $a$ witnesses $A\precsim B$, so $\{a\}^{\perp\perp}=A$ and $\{a^*\}^{\perp\perp}\subseteq B$, and $b$ witnesses $B\precsim C$, so $\{b\}^{\perp\perp}=B$ and $\{b^*\}^{\perp\perp}\subseteq C$.  Then $\{a^*\}^{\perp\perp}\subseteq\{b\}^{\perp\perp}$ and hence $\{ba\}^{\perp\perp}=\{a\}^{\perp\perp}=A$, by \autoref{simlem}, while $\{a^*b^*\}^{\perp\perp}\subseteq\{b^*\}^{\perp\perp}=C$, i.e. $A\precsim C$.  If $a$ and $b$ actually witnessed $A\sim B\sim C$ then again by \autoref{simlem} we would have $\{a^*b^*\}^{\perp\perp}=\{b^*\}^{\perp\perp}=C$, i.e. $A\sim C$.

If $\precsim$ is reflexive then, for every $A\in\mathscr{P}(S)^\perp$, we have $A^\perp\precsim A^\perp$ so $A^\perp=\{s\}^{\perp\perp}$, for some $s\in S$, and hence $A=A^{\perp\perp}=\{s\}^\perp$.  On the other hand, if $A^\perp=\{s\}^\perp$ then $A=A^{\perp\perp}=\{s^*s\}^{\perp\perp}=\{(s^*s)^*\}^{\perp\perp}$, i.e. $A\sim A$, so \eqref{simtraneq} implies that $\sim$, and hence $\precsim$, is reflexive.
\end{proof}

Just as any partial isometry yields a map between the projection lattices below its support projections, any $s\in S$ yields a map on the *-annihilator ortholattice.

\begin{lem}\label{nonorthodiv}
For $s\in S$, a supremum preserving map on $\mathscr{P}(S)^\perp$ is given by \[A\mapsto(As)^{\perp\perp}.\]
\end{lem}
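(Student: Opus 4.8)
The plan is to verify two things about the map $\Phi\colon A\mapsto(As)^{\perp\perp}$: first that it is well-defined as a map $\mathscr{P}(S)^\perp\to\mathscr{P}(S)^\perp$, and second that it preserves arbitrary suprema. Well-definedness is essentially free: for any $A\in\mathscr{P}(S)^\perp$ (indeed for any $A\subseteq S$) the set $(As)^{\perp\perp}$ is of the form $T^{\perp\perp}=(T^{\perp})^\perp$, hence lies in $\mathscr{P}(S)^\perp$ since $\mathscr{P}(S)^\perp=\{T^\perp:T\subseteq S\}$ by definition and $T^{\perp\perp\perp}=T^\perp$. So the content is the supremum-preservation.

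Recall from \autoref{Lperpequiv} that in $\mathscr{P}(S)^\perp$ we have $\bigwedge\mathcal{T}=\bigcap\mathcal{T}$ and consequently, for any family $\{A_i\}_{i\in I}\subseteq\mathscr{P}(S)^\perp$, the supremum is computed by $\bigvee_i A_i=\bigl(\bigcap_i A_i^\perp\bigr)^\perp=\bigl(\bigl(\bigcup_i A_i\bigr)^{\perp}\bigr)^{\perp}=\bigl(\bigcup_i A_i\bigr)^{\perp\perp}$. So I would first reduce to showing, for any family of \emph{arbitrary} subsets $\{A_i\}$ of $S$, that
\[
\Bigl(\bigl(\bigcup_i A_i\bigr)^{\perp\perp}s\Bigr)^{\perp\perp}=\Bigl(\bigcup_i (A_i s)^{\perp\perp}\Bigr)^{\perp\perp}.
\]
The right-hand side equals $\bigl(\bigcup_i A_i s\bigr)^{\perp\perp}$, since $(A_i s)^{\perp\perp}$ and $A_is$ have the same $\perp$ (a set and its double-$\perp$-closure always do), and a union of sets with a given family of $\perp$'s closes to the same $\perp$. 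And $\bigcup_i A_i s=\bigl(\bigcup_i A_i\bigr)s$. So the whole statement collapses to the single identity
\[
\bigl(T^{\perp\perp}s\bigr)^{\perp\perp}=(Ts)^{\perp\perp}
\]
for arbitrary $T\subseteq S$. But this is exactly what \autoref{appb} gives: the first assertion of that lemma, $\{ab\}^{\perp\perp}=(\{a\}^{\perp\perp}b)^{\perp\perp}$, is proved there for singletons, and the proof only uses that $a^*a\in\{a\}^{\perp\perp}$ together with \autoref{s*st}; the identical argument with $\{a\}$ replaced by an arbitrary set $T$ shows $(Ts)^{\perp\perp}=(T^{\perp\perp}s)^{\perp\perp}$ — indeed one checks $(T^{\perp\perp}s)^\perp\subseteq (Ts)^\perp$ is automatic from $T\subseteq T^{\perp\perp}$, while for the reverse inclusion, if $u\in(Ts)^\perp_+$ then $tsus^*=0$ for all $t\in T$, so $sus^*\in T^\perp=(T^{\perp\perp})^\perp$, whence $u\in(T^{\perp\perp}s)^\perp$.

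I expect the only real subtlety to be bookkeeping around the reduction: making sure that "supremum preserving" is interpreted as preserving \emph{arbitrary} suprema (which is what the formula $\bigvee\mathcal{T}=(\bigcap\mathcal{T}^\perp)^\perp$ licenses), and being careful that $\bigcup_i(A_is)^{\perp\perp}$ need not itself be in $\mathscr{P}(S)^\perp$, so one must close it — which is harmless since the supremum in the ortholattice is the closure of the union anyway. There is no genuine obstacle here; the lemma is a direct corollary of \autoref{appb} once one observes that the supremum operation in $\mathscr{P}(S)^\perp$ is "union then double-perp" and that the map $T\mapsto (Ts)^{\perp\perp}$ commutes with double-perp on the input by that lemma.
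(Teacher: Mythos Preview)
Your proof is correct and follows essentially the same route as the paper: both reduce to showing $((\bigcup\mathcal{A})s)^{\perp\perp}=((\bigvee\mathcal{A})s)^{\perp\perp}$ and verify the nontrivial inclusion by taking a positive $u$ in $((\bigcup\mathcal{A})s)^\perp$, observing $sus^*\in(\bigcup\mathcal{A})^\perp=(\bigvee\mathcal{A})^\perp$, and concluding $u\in((\bigvee\mathcal{A})s)^\perp$. Your explicit framing of this as the set-valued version of the identity $(T^{\perp\perp}s)^{\perp\perp}=(Ts)^{\perp\perp}$ from \autoref{appb} is a nice observation, but the underlying computation is identical to the paper's.
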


\begin{proof}
Given $\mathcal{A}\subseteq\mathscr{P}(S)^\perp$ we have \[\bigvee_{A\in\mathcal{A}}(As)^{\perp\perp}=(\bigcap_{A\in\mathcal{A}}(As)^\perp)^\perp=((\bigcup\mathcal{A})s)^{\perp\perp}\subseteq((\bigvee\mathcal{A})s)^{\perp\perp}.\] To see that this last inclusion can be reversed, take $a\in((\bigcup\mathcal{A})s)^\perp_+$.  This means $sas^*\in(\bigcup\mathcal{A})^\perp=(\bigvee\mathcal{A})^\perp$ and hence $a\in((\bigvee\mathcal{A})s)^\perp$, i.e. $((\bigcup\mathcal{A})s)^\perp\subseteq((\bigvee\mathcal{A})s)^\perp$ and hence $((\bigvee\mathcal{A})s)^{\perp\perp}\subseteq((\bigcup\mathcal{A})s)^{\perp\perp}$.
\end{proof}

However, $A\mapsto(As)^{\perp\perp}$ may not preserve infimums, as the following example shows.  Specifically, consider $S=\mathcal{B}(H)$, where $H$ is a separable infinite dimensional Hilbert space with basis $(e_n)$.  Define $s\in\mathcal{B}(H)_+$ by $s(e_n)=\frac{1}{n^2}e_n$, so $\overline{\mathcal{R}(s)}=H$ and hence $\{s\}^{\perp\perp}=S$, and let $p$ be the projection onto $\mathbb{C}v$, where $v=\sum\frac{1}{n}e_n$.  Then $q=1-p\perp p$, and hence $qSq=\{q\}^{\perp\perp}\perp\{p\}^{\perp\perp}=pSp$ even though we still have $\overline{\mathcal{R}(sq)}=H$ and hence $(qSqs)^{\perp\perp}=\{qs\}^{\perp\perp}=\{s\}^{\perp\perp}=S$, while $(pSps)^{\perp\perp}\neq\{0\}$.

This example also shows that $A\mapsto(As)^{\perp\perp}$ may not preserve orthogonality, so we can not use it to show that $\sim$ is orthogonally divisible (i.e. divisible in the usual dimension relation sense \textendash\, \cite{Kalmbach1983} \S11 Definition 1).  However, it does yield a weaker (possibly non-orthogonal) version of complete divisibility.

\begin{thm}\label{div}
If $\sim$ is reflexive, $\mathcal{A}\subseteq\mathscr{P}(S)^\perp$ and $\bigvee\mathcal{A}\sim B\in\mathscr{P}(S)^\perp$ then we have $f:\mathcal{A}\rightarrow\mathscr{P}(S)^\perp$ with $B=\bigvee_{A\in A}f(A)$ and $A\sim f(A)$, for all $A\in\mathcal{A}$.
\end{thm}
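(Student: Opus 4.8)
The plan is to use a single element $s$ witnessing $\bigvee\mathcal{A}\sim B$ and to ``compress'' it onto each member of $\mathcal{A}$, in the spirit of \autoref{simlem} and \autoref{nonorthodiv}. Concretely, I would fix $s\in S$ with $\{s\}^{\perp\perp}=\bigvee\mathcal{A}$ and $\{s^*\}^{\perp\perp}=B$, which exist by definition of $\bigvee\mathcal{A}\sim B$, and then define $f:\mathcal{A}\to\mathscr{P}(S)^\perp$ by $f(A)=(As^*)^{\perp\perp}$; this is patently a map into $\mathscr{P}(S)^\perp$. The intuition is that $s^*$ behaves like a partial isometry from $\bigvee\mathcal{A}=\{s\}^{\perp\perp}$ onto $B=\{s^*\}^{\perp\perp}$, so cutting it down to $A\subseteq\bigvee\mathcal{A}$ should carve out precisely the sub-*-annihilator of $B$ equivalent to $A$.

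Next I would check $A\sim f(A)$ for each $A\in\mathcal{A}$, and this is the one place where reflexivity is genuinely needed: from $A\sim A$ I obtain $a\in S$ with $\{a\}^{\perp\perp}=A=\{a^*\}^{\perp\perp}$. Since $A\in\mathcal{A}$ we have $\{a\}^{\perp\perp}=A\subseteq\bigvee\mathcal{A}=\{s\}^{\perp\perp}=\{(s^*)^*\}^{\perp\perp}$, so \autoref{simlem} applies with its ``$b$'' taken to be $s^*$ and yields that $as^*$ witnesses $A=\{a^*\}^{\perp\perp}\sim(\{a\}^{\perp\perp}s^*)^{\perp\perp}=(As^*)^{\perp\perp}=f(A)$, as required.

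It then remains to show $\bigvee_{A\in\mathcal{A}}f(A)=B$. By \autoref{nonorthodiv} the map $A\mapsto(As^*)^{\perp\perp}$ preserves suprema, so $\bigvee_{A\in\mathcal{A}}f(A)=((\bigvee\mathcal{A})s^*)^{\perp\perp}=(\{s\}^{\perp\perp}s^*)^{\perp\perp}$, and two further invocations of \autoref{simlem} (first $(\{s\}^{\perp\perp}s^*)^{\perp\perp}=\{ss^*\}^{\perp\perp}$ from its first clause, then $\{ss^*\}^{\perp\perp}=\{s^*\}^{\perp\perp}$ as the instance ``$a=s$, $b=s^*$'' of its second clause) identify this with $\{s^*\}^{\perp\perp}=B$. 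I do not anticipate a serious obstacle: once $f$ is written down correctly, the argument is bookkeeping with adjoints and sides, the only mild care being to keep $s^*$ (not $s$) throughout and to orient the containment hypothesis of \autoref{simlem} the right way. The genuinely load-bearing point is the use of reflexivity to produce an $a$ with $\{a\}^{\perp\perp}=A=\{a^*\}^{\perp\perp}$ — this is exactly what upgrades the one-sided conclusion of \autoref{simlem} into an honest $\sim$ for $A$ itself — so the hypothesis that $\sim$ be reflexive cannot be dispensed with.
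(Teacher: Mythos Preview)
Your proof is correct and essentially identical to the paper's: the paper picks $s$ with $\{s^*\}^{\perp\perp}=\bigvee\mathcal{A}$ and $\{s\}^{\perp\perp}=B$ (your $s^*$), sets $f(A)=(As)^{\perp\perp}$, and invokes \autoref{nonorthodiv} and \autoref{simlem} in the same way. The only cosmetic difference is that the paper obtains the reflexivity witness as a positive element $a\in A_+$ rather than an arbitrary $a$ with $\{a\}^{\perp\perp}=A=\{a^*\}^{\perp\perp}$.
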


\begin{proof}
Take $s\in S$ with $\{s^*\}^{\perp\perp}=\bigvee\mathcal{A}$ and $\{s\}^{\perp\perp}=B$.  For each $A\in\mathcal{A}$ let $f(A)=(As)^{\perp\perp}$.  By \autoref{appb} and \autoref{nonorthodiv},
\[\bigvee_{A\in A}f(A)=((\bigvee\mathcal{A})s)^{\perp\perp}=\{s^*s\}^{\perp\perp}=B.\]
As $\sim$ is reflexive, for each $A\in\mathcal{A}$ we have $a\in A_+$ with $\{a\}^{\perp\perp}=A$ so $\{s^*a\}^{\perp\perp}=A$ and $\{as\}^{\perp\perp}=(As)^{\perp\perp}=f(A)$, by \autoref{appb}, i.e. $as$ witnesses $A\sim f(A)$.
\end{proof}

\begin{prp}\label{simcen}
If $A\precsim B$ then $B^\triangledown\subseteq A^\triangledown$ and if $A\sim B$ then $A^\triangledown=B^\triangledown$.
\end{prp}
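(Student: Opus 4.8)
The plan is to reduce everything to the single-generator case and exploit two facts: that $\triangledown$-annihilators are themselves *-annihilators, and that the relation $\triangledown$ is insensitive both to $^{\perp\perp}$-closure and to the involution. The key auxiliary identity I would prove is
\[
(\{s\}^{\perp\perp})^\triangledown=\{s\}^\triangledown\qquad\textrm{for all }s\in S.
\]
The inclusion $\subseteq$ is immediate since $\{s\}\subseteq\{s\}^{\perp\perp}$ and $^\triangledown$ reverses inclusion. For $\supseteq$, fix $x\in\{s\}^\triangledown$; as $\triangledown$ is symmetric on a proper *-semigroup (\autoref{triequiv}) we get $s\in\{x\}^\triangledown$. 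But $\{x\}^\triangledown\in\mathscr{P}(S)^\triangledown\subseteq\mathscr{P}(S)^\perp$ by \autoref{Lperpequiv}, so $\{x\}^\triangledown$ is $^{\perp\perp}$-closed and hence $\{s\}^{\perp\perp}\subseteq(\{x\}^\triangledown)^{\perp\perp}=\{x\}^\triangledown$, which by symmetry of $\triangledown$ says precisely that $x\in(\{s\}^{\perp\perp})^\triangledown$.

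Next I would note $\{s\}^\triangledown=\{s^*\}^\triangledown$, which is immediate from $s^*\triangledown t\Leftrightarrow s\triangledown t$ in \autoref{triequiv}. Thus whenever $s$ witnesses $A\sim C$ (so $\{s\}^{\perp\perp}=A$ and $\{s^*\}^{\perp\perp}=C$), the auxiliary identity gives $A^\triangledown=\{s\}^\triangledown=\{s^*\}^\triangledown=C^\triangledown$. For the first assertion, take $C\in\mathscr{P}(S)^\perp$ with $A\sim C\subseteq B$ (the definition of $A\precsim B$); then $B^\triangledown\subseteq C^\triangledown=A^\triangledown$, since $^\triangledown$ reverses inclusion. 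For the second, $A\sim B$ is the case $C=B$, so $B^\triangledown=C^\triangledown=A^\triangledown$.

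I do not anticipate a real obstacle: the only non-formal ingredient is the inclusion $\mathscr{P}(S)^\triangledown\subseteq\mathscr{P}(S)^\perp$ from \autoref{Lperpequiv}, which is exactly what makes a $\triangledown$-annihilator $^{\perp\perp}$-closed and so lets the auxiliary identity through. The one point to watch is the directionality of the non-symmetric relation $\perp$ against the symmetric relation $\triangledown$, so that membership in $(\cdot)^\triangledown$ is read off correctly.
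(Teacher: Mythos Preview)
Your argument is correct. The auxiliary identity $(\{s\}^{\perp\perp})^\triangledown=\{s\}^\triangledown$ goes through exactly as you say, since $\{x\}^\triangledown\in\mathscr{P}(S)^\triangledown\subseteq\mathscr{P}(S)^\perp$ makes it $^{\perp\perp}$-closed, and the symmetry of $\triangledown$ in a proper *-semigroup (\autoref{triequiv}) lets you pass back and forth between $x\in\{s\}^\triangledown$ and $s\in\{x\}^\triangledown$.

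Your route is genuinely different from the paper's. The paper argues directly at the element level: given a witness $s$ for $A\precsim B$ and $b\in B^\triangledown$, it shows $atb=0$ for arbitrary $a\in A$, $t\in S$ by observing $ss^*\in B$, so $ss^*satb=0$, hence $satb=0$ by \autoref{s*st}, hence $atbb^*t^*a^*\in\{s\}^\perp\cap A=\{0\}$. It then deduces the $\sim$ case from $A\precsim B\precsim A$. Your approach instead packages the whole thing into the structural identity $(\{s\}^{\perp\perp})^\triangledown=\{s\}^\triangledown$ together with $\{s\}^\triangledown=\{s^*\}^\triangledown$, getting the $\sim$ case first and reading off $\precsim$ from monotonicity of $^\triangledown$. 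Your version is cleaner and isolates a reusable lemma, at the cost of invoking \autoref{Lperpequiv}; the paper's version is entirely self-contained but involves a small element chase.
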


\begin{proof}
If $A\precsim B$ then we have $s\in S$ with $\{s\}^{\perp\perp}=A$ and $\{s^*\}^{\perp\perp}\subseteq B$.  If $b\in B^\triangledown$ then, for any $a\in A$ and $t\in S$ we have $ss^*\in B$ so $ss^*satb=0=satb$ and hence $atbb^*t^*a^*\in\{s\}^\perp\cap A=A^\perp\cap A=\{0\}$, i.e. $atb=0$.  As $a\in A$ and $t\in S$ were arbitrary, $b\in A^\triangledown$.  As $b\in B^\triangledown$ was arbitrary, $B^\triangledown\subseteq A^\triangledown$.  Now if $A\sim B$ then certainly $A\precsim B\precsim A$ so $A^\triangledown=B^\triangledown$.
\end{proof}

\begin{cor}\label{cendiv}
If $\sim$ is reflexive then, for all $A,B\in\mathscr{P}(S)^\perp$ and $I\in\mathscr{P}(S)^\triangledown$,
\[A\sim B\ \Rightarrow\ A\cap I\sim B\cap I\qquad\textrm{and}\qquad A\precsim B\ \Rightarrow\ A\cap I\precsim B\cap I.\]
\end{cor}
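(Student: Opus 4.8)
The plan is to reduce the $\precsim$ statement to the $\sim$ statement, and to prove the $\sim$ statement by feeding the decomposition $A = (A\cap I)\vee(A\cap I^\perp)$ into the complete divisibility of $\sim$ established in \autoref{div}, then using that $\mathscr{P}(S)^\triangledown$ sits in the centre of $\mathscr{P}(S)^\perp$ (\autoref{annidealcentre}) to locate the two resulting pieces inside $I$ and $I^\perp$.

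First I would fix $A\sim B$ and $I\in\mathscr{P}(S)^\triangledown$. Since $I$ is central in $\mathscr{P}(S)^\perp$ by \autoref{annidealcentre}, it is neutral, and since meets in $\mathscr{P}(S)^\perp$ are intersections (\autoref{Lperpequiv}) we get $A=(A\cap I)\vee(A\cap I^\perp)$, so $\bigvee\{A\cap I,A\cap I^\perp\}=A\sim B$. Applying \autoref{div} — whose hypothesis that $\sim$ be reflexive is exactly the hypothesis of the corollary — to the two-element family $\mathcal{A}=\{A\cap I,A\cap I^\perp\}$ yields $f(A\cap I),f(A\cap I^\perp)\in\mathscr{P}(S)^\perp$ with $B=f(A\cap I)\vee f(A\cap I^\perp)$, $A\cap I\sim f(A\cap I)$ and $A\cap I^\perp\sim f(A\cap I^\perp)$.

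The key step is to show $f(A\cap I)\subseteq I$ and $f(A\cap I^\perp)\subseteq I^\perp$. For the first, \autoref{simcen} gives $(A\cap I)^\triangledown=f(A\cap I)^\triangledown$; since $A\cap I\subseteq I$ forces $I^\triangledown\subseteq(A\cap I)^\triangledown$, applying $^\triangledown$ again and using $f(A\cap I)\subseteq f(A\cap I)^{\triangledown\triangledown}$ gives $f(A\cap I)\subseteq I^{\triangledown\triangledown}=I$. The same argument with $I^\perp$ in place of $I$ works because the ideal $I$ is in particular a right ideal, so $I^\perp=I^\triangledown\in\mathscr{P}(S)^\triangledown$ by \autoref{Lbotprp}\eqref{Lbotprp5}, whence $f(A\cap I^\perp)\subseteq(I^\perp)^{\triangledown\triangledown}=I^\perp$. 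Neutrality of $I$ then gives $B\cap I=(f(A\cap I)\vee f(A\cap I^\perp))\cap I=(f(A\cap I)\cap I)\vee(f(A\cap I^\perp)\cap I)=f(A\cap I)\vee\{0\}=f(A\cap I)$, so $A\cap I\sim B\cap I$. Finally, for the $\precsim$ clause: if $A\sim C\subseteq B$ with $C\in\mathscr{P}(S)^\perp$, the case just proved gives $A\cap I\sim C\cap I$, and $C\cap I\subseteq B\cap I$, so $A\cap I\precsim B\cap I$.

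I expect the only subtle point to be the location step. Because $\mathscr{P}(S)^\perp$ need not be orthomodular, one cannot infer $f(A\cap I^\perp)\subseteq I^\perp$ merely from disjointness of $f(A\cap I^\perp)$ and $I$; it is essential to get the containment directly from the $^\triangledown$-invariance of $\sim$ in \autoref{simcen}. Everything else is routine bookkeeping with centrality, the identity $\bigwedge=\bigcap$, and the polarity identities $T\subseteq T^{\triangledown\triangledown}$ and $I^{\triangledown\triangledown}=I$.
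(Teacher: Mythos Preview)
Your proposal is correct and follows essentially the same route as the paper: decompose $A=(A\cap I)\vee(A\cap I^\perp)$ via \autoref{annidealcentre}, apply \autoref{div}, use \autoref{simcen} to place the two pieces in $I$ and $I^\perp$, and finish with neutrality. The only cosmetic difference is that for the $\precsim$ clause you reduce to the $\sim$ case already proved, whereas the paper re-applies \autoref{div} directly to the witness of $A\precsim B$; both are equally short.
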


\begin{proof}
By \autoref{annidealcentre}, $A=(A\cap I)\vee(A\cap I^\perp)$.  If $A\sim B$ then, by \autoref{div}, we have $C,D\in\mathscr{P}(S)^\perp$ with $A\cap I\sim C$, $A\cap I^\perp\sim D$ and $B=C\vee D$.  By \autoref{simcen}, $C\subseteq C^{\triangledown\triangledown}=(A\cap I)^{\triangledown\triangledown}\subseteq I$ and likewise $D\subseteq I^\perp$.  As central elements are neutral, $B\cap I=(C\cap I)\vee(D\cap I)=C$ and hence $A\cap I\sim B\cap I$.

Similarly, if $A\precsim B$ then, by \autoref{div}, we have $C\in\mathscr{P}(S)^\perp$ with $A\cap I\sim C\subseteq B$.  Again $C\subseteq I$ so $A\cap I\sim C=C\cap I\subseteq B\cap I$, i.e. $A\cap I\precsim B\cap I$.
\end{proof}

This corollary, together with a suitable extra assumption, allows us to get type decompositions from $\sim$, just as before we got type decompositions from $\triangledown$.

\begin{dfn}
Given binary relations $R$ and $\approx$ on a poset $\mathbb{P}$, we call $\mathcal{A}\subseteq\mathbb{P}$ an \emph{$R$-subset} if $\mathcal{A}\times\mathcal{A}\subseteq R\ \cup=$, i.e. $pRq$, for all distinct $p,q\in\mathcal{A}$, and we call $\approx$
\begin{itemize}
\item \emph{$R$-additive} if $p\vee q\approx p'\vee q'$ whenever $pRq$, $p'Rq'$, $p\approx p'$ and $q\approx q'$.
\item \emph{$R$-complete} if $\bigvee\mathcal{A}\approx\bigvee\mathcal{B}$ whenever $\mathcal{A}$ and $\mathcal{B}$ are $R$-subsets and there is a bijection $f:\mathcal{A}\rightarrow\mathcal{B}$ with $p\approx f(p)$, for all $p\in\mathcal{A}$.
\end{itemize}
\end{dfn}

We also call $A\in\mathscr{P}(S)^\perp$, \emph{$\sim$-finite} if $A\sim B\subseteq A\Rightarrow A=B$, for all $B\in\mathscr{P}(S)^\perp$.

\begin{thm}\label{typeIII}
If $\sim$ is reflexive and $\triangledown$-complete, there is a unique $A\in\mathscr{P}(S)^\triangledown$ with $A=B^{\triangledown\triangledown}$, for a $\sim$-finite $B\in\mathscr{P}(S)^\perp$, while $A^\perp$ contains no $\sim$-finite *-annihilators.
\end{thm}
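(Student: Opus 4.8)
The plan is to obtain the decomposition by a standard exhaustion argument using the fact that $\mathscr{P}(S)^\triangledown$ is a complete lattice, mimicking the structure of \autoref{typeI} but with $\sim$-finiteness replacing $\triangledown$-finiteness. First I would let $\mathcal{F}=\{B\in\mathscr{P}(S)^\perp : B \text{ is }\sim\text{-finite}\}$ and set $A=\bigvee\{B^{\triangledown\triangledown}:B\in\mathcal{F}\}$, the supremum taken in the complete lattice $\mathscr{P}(S)^\triangledown$. The element $A$ is automatically in $\mathscr{P}(S)^\triangledown$, so the existence part reduces to two claims: (i) $A=B^{\triangledown\triangledown}$ for a single $\sim$-finite $B$, and (ii) $A^\perp$ contains no nonzero $\sim$-finite *-annihilator. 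For (ii), if $C\subseteq A^\perp$ were $\sim$-finite and nonzero, then $C\in\mathcal{F}$, so $C=C^{\triangledown\triangledown}\subseteq A$; but $C\subseteq A^\perp$ forces $C\subseteq A\wedge A^\perp=\{0\}$ (infimums are intersections, by \autoref{Lperpequiv}), a contradiction. So (ii) is essentially free once $A$ is defined this way, and uniqueness follows: if $A'\in\mathscr{P}(S)^\triangledown$ also works, then every $\sim$-finite $B$ has $B^{\triangledown\triangledown}\subseteq A'$ (else $B\cap A'^\perp$ would be a nonzero $\sim$-finite piece of $A'^\perp$, using \autoref{cendiv} to see that $B\cap A'^\perp\sim$-finiteness is inherited — more precisely that a $\sim$-subrelation of $B$ restricts through the central $A'^\perp$), hence $A\subseteq A'$, and symmetrically $A'=A''{}^{\triangledown\triangledown}\subseteq A$ for the witness $A''$ of $A'$.

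The real content is claim (i): that the supremum $A$ of the central hulls of all $\sim$-finite *-annihilators is itself realized as $B^{\triangledown\triangledown}$ for one $\sim$-finite $B$. This is where I expect to invoke the abstract machinery cited just before the theorem, namely \cite{Bice2014b} Theorem 2.6 together with Propositions 4.4 and Theorem 4.6, exactly as \autoref{typeI} invokes it for $\triangledown$-finiteness. The point is that $\sim$, being reflexive and $\triangledown$-complete, makes the $\sim$-finite elements of the complete ortholattice $\mathscr{P}(S)^\perp$ into a class to which that general lattice-theoretic decomposition theorem applies, relative to the complete Boolean-like subfamily $\mathscr{P}(S)^\triangledown$ sitting inside the centre (\autoref{annidealcentre}). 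Concretely one needs: that $\sim$-finiteness is preserved under $\sim$ and passes to subelements appropriately, that $\bigvee$ of a suitably independent family of $\sim$-finite elements is $\sim$-finite, and that the central cover operation $B\mapsto B^{\triangledown\triangledown}$ behaves like the hull operation in \cite{Bice2014b}. The $\triangledown$-completeness of $\sim$ is precisely the hypothesis that lets one glue countably- or arbitrarily-many $\sim$-finite pieces along a $\triangledown$-subset into a single $\sim$-finite piece witnessing the join.

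The main obstacle, then, is verifying that the hypotheses of \cite{Bice2014b} Theorem 2.6 are met by the pair $(\sim, \mathscr{P}(S)^\triangledown)$ acting on $\mathscr{P}(S)^\perp$ — in particular the additivity/completeness bookkeeping needed to assemble the join of all $\sim$-finite elements into one. I would handle this by first recording the needed closure properties of $\sim$-finiteness: using \autoref{cendiv}, if $A\sim B$ and $I\in\mathscr{P}(S)^\triangledown$ then $A\cap I\sim B\cap I$, which shows that $\sim$-finiteness of $A$ transfers to $A\cap I$ and, combined with $A=(A\cap I)\vee(A\cap I^\perp)$ from \autoref{annidealcentre}, lets the argument localize over $\mathscr{P}(S)^\triangledown$. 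Then the $\triangledown$-completeness hypothesis supplies the join step directly. Once these are in place, \cite{Bice2014b} Theorem 2.6 delivers the unique $A$, and the two bullet conditions translate back via $B\mapsto B^{\triangledown\triangledown}$ into the statement as phrased. I would keep the write-up short by citing the abstract theorem for the combinatorial core and only spelling out the dictionary between its hypotheses and our $\sim$, $\triangledown$, $\perp$ here.
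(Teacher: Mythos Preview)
Your proposal is correct and follows essentially the same route as the paper: the paper's proof is a one-line citation of \autoref{cendiv} together with \cite{Bice2014b} Theorem 2.6 and Proposition 4.4, and your sketch is precisely an unpacking of how those ingredients fit together (the exhaustion over $\mathscr{P}(S)^\triangledown$, the localization via \autoref{cendiv}, and the $\triangledown$-completeness for the join step). One small slip: in your argument for (ii) you write $C=C^{\triangledown\triangledown}$ where you only have (and only need) $C\subseteq C^{\triangledown\triangledown}\subseteq A$; also, Theorem 4.6 of \cite{Bice2014b} is not actually needed here, only Theorem 2.6 and Proposition 4.4.
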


\begin{proof}
Follows from \autoref{cendiv} and \cite{Bice2014b} Theorem 2.6 and Proposition 4.4.
\end{proof}

When $S$ is a von Neumann algebra the $A$ above consists of the type $\mathrm{I}$ and type $\mathrm{II}$ parts of $S$, while $A^\perp$ is the type $\mathrm{III}$ part of $S$.  Likewise, using \cite{Bice2014b} Theorem 2.4, we get the following decomposition which, when $S$ is a von Neumann algebra, is just the usual decomposition into finite (the type $\mathrm{I}_n$ parts, for $n<\infty$, and type $\mathrm{II}_1$ part) and properly infinte parts (the type $\mathrm{II}_\infty$ and type $\mathrm{III}$ parts).

\begin{thm}\label{typefin}
If $\sim$ is reflexive and $\triangledown$-complete then there is a unique $\sim$-finite $A\in\mathscr{P}(S)^\triangledown$ such that $A^\perp$ contains no $\sim$-finite elements of $\mathscr{P}(S)^\triangledown$.
\end{thm}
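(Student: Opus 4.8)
The plan is to obtain \autoref{typefin} from \autoref{cendiv} together with the abstract finite/properly-infinite decomposition of \cite{Bice2014b} (Theorem 2.4 and Proposition 4.4), applied to the complete lattice $\mathscr{P}(S)^\perp$, its complete sublattice $\mathscr{P}(S)^\triangledown$, and the relation $\sim$ \textemdash\ exactly as \autoref{typeIII} was obtained from \autoref{cendiv} and \cite{Bice2014b} Theorem 2.6, the only change being that one invokes the finite part dichotomy rather than the semifinite part dichotomy. The structural inputs that make this machinery applicable are: reflexivity of $\sim$, which via \autoref{div} lets one split a $\sim$-equivalence over joins; $\triangledown$-completeness of $\sim$, which in particular gives $\triangledown$-additivity (the two-element case); and \autoref{cendiv}, which says $\sim$ and $\precsim$ are compatible with cutting by elements of $\mathscr{P}(S)^\triangledown$. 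These are precisely what \cite{Bice2014b} Proposition 4.4 needs in order to recognise the $\sim$-finite $*$-annihilators as the ``finite elements'' of its framework, after which Theorem 2.4 there produces the asserted decomposition.

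Spelled out, the underlying argument runs as follows. For \emph{existence}, use Zorn's lemma to choose a maximal family $\mathcal{A}$ of nonzero, pairwise $\triangledown$-orthogonal, $\sim$-finite elements of $\mathscr{P}(S)^\triangledown$, and put $A=\bigvee\mathcal{A}\in\mathscr{P}(S)^\triangledown$ (a join of central elements stays central, since $\mathscr{P}(S)^\triangledown$ is a complete lattice). To see $A$ is $\sim$-finite, suppose $A\sim B\subseteq A$; since each $A'\in\mathcal{A}$ is central one has $B=\bigvee_{A'\in\mathcal{A}}(B\wedge A')$, and \autoref{cendiv} gives $A'=A\wedge A'\sim B\wedge A'\subseteq A'$, so $B\wedge A'=A'$ by $\sim$-finiteness of $A'$, whence $B\supseteq\bigvee\mathcal{A}=A$ and $B=A$. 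Moreover $A^\perp$ contains no nonzero $\sim$-finite element $C$ of $\mathscr{P}(S)^\triangledown$: since $A^\perp=A^\triangledown\subseteq A'^\triangledown$ for every $A'\in\mathcal{A}$ (by \autoref{Lbotprp}\eqref{Lbotprp5}, as central $*$-annihilators are ideals), such a $C$ would be $\triangledown$-orthogonal to all of $\mathcal{A}$, contradicting maximality.

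For \emph{uniqueness}, the key sub-step is that $\sim$-finiteness is inherited by central cuts: if $A$ is $\sim$-finite and $I\in\mathscr{P}(S)^\triangledown$, then $A\wedge I$ is $\sim$-finite. Indeed, given $A\wedge I\sim D\subseteq A\wedge I$, reflexivity gives $A\wedge I^\perp\sim A\wedge I^\perp$; as $A\wedge I$ and $A\wedge I^\perp$ are $\triangledown$-orthogonal, $\triangledown$-additivity of $\sim$ yields $A=(A\wedge I)\vee(A\wedge I^\perp)\sim D\vee(A\wedge I^\perp)\subseteq A$, so $D\vee(A\wedge I^\perp)=A$ by $\sim$-finiteness of $A$, and meeting with the central $I$ gives $D=A\wedge I$. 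Now if $A,A'$ both satisfy the conclusion of the theorem, then $A\wedge A'^\perp$ is a $\sim$-finite element of $\mathscr{P}(S)^\triangledown$ lying below $A'^\perp$, hence $\{0\}$, so $A\subseteq A'$; by symmetry $A=A'$.

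The main obstacle is not any individual step but the legitimacy of the infinite central bookkeeping: that $\sim$-finiteness survives the infinite join $\bigvee\mathcal{A}$ depends both on the complete neutrality (distributivity) of $\mathscr{P}(S)^\triangledown$ inside $\mathscr{P}(S)^\perp$ \textemdash\ used above to write $B=\bigvee(B\wedge A')$ \textemdash\ and on $\triangledown$-completeness of $\sim$ genuinely allowing a $\sim$-equivalence to be cut along an arbitrary orthogonal central family (this is where \autoref{div} and \autoref{cendiv} are doing the work). Verifying that this is exactly the hypothesis package consumed by \cite{Bice2014b} Theorem 2.4 and Proposition 4.4 is the substantive point; once that identification is made, the decomposition, and its uniqueness, are immediate.
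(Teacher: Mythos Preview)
Your proposal is correct and matches the paper's approach exactly: the paper derives \autoref{typefin} from \autoref{cendiv} together with \cite{Bice2014b} Theorem 2.4 (and implicitly Proposition 4.4), just as \autoref{typeIII} was derived using Theorem 2.6 in place of 2.4. Your spelled-out Zorn-style argument is a faithful unpacking of that reference; incidentally, the infinite distributivity $B=\bigvee_{A'\in\mathcal{A}}(B\wedge A')$ you flag as delicate is not actually needed, since once $B\wedge A'=A'$ you get $A'\subseteq B$ directly and hence $A=\bigvee\mathcal{A}\subseteq B$.
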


Next we want to show that $\sim$ satisfies the parallelogram law (see \cite{Berberian1972} \S13) or at least something equivalent in the orthomodular case, namely that $\sim$ is weaker than perspectivity (recall that elements $p$ and $q$ in a bounded poset $\mathbb{P}$ are \emph{perspective} if they have a common complement $r\in\mathbb{P}$, i.e. $p\wedge r=0=q\wedge r$ and $p\vee r=1=q\vee r$).

\begin{lem}\label{posp'}
If $a,b\in S$ and $\{a\}^\perp\cap\{b\}^{\perp\perp}=\{0\}=\{a\}^{\perp\perp}\cap\{b\}^\perp$ then \[\{a\}^{\perp\perp}\sim\{b\}^{\perp\perp}.\]
\end{lem}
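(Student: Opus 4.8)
The plan is to cook up a single element of $S$ whose left and right support annihilators are exactly $\{a\}^{\perp\perp}$ and $\{b\}^{\perp\perp}$, and the natural candidate is $ba$ (or $a^*b^*$). Indeed, \autoref{simlem} already tells us that $ba$ witnesses $\{a^*\}^{\perp\perp}\sim(\{a\}^{\perp\perp}b)^{\perp\perp}$ provided $\{a\}^{\perp\perp}\subseteq\{b^*\}^{\perp\perp}$. So the first move is to massage the hypotheses into that inclusion form. The hypothesis $\{a\}^\perp\cap\{b\}^{\perp\perp}=\{0\}$ says, applying $^\perp$ and using that intersections are meets while $^\perp$ is an orthocomplementation in the complete ortholattice $\mathscr{P}(S)^\perp$ (\autoref{Lperpequiv}), that $\{a\}^{\perp\perp}\vee\{b\}^\perp=S$; symmetrically $\{a\}^{\perp\perp}\cap\{b\}^\perp=\{0\}$ gives $\{a\}^\perp\vee\{b\}^{\perp\perp}=S$. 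These are exactly the conditions making $\{a\}^{\perp\perp}$ and $\{b\}^{\perp\perp}$ perspective via the common complement... no, more precisely they say $\{b\}^\perp$ is a common complement of $\{a\}^{\perp\perp}$ and of nothing yet — I need to be careful here, since without orthomodularity perspectivity need not give $\sim$ directly, which is presumably the point of proving this lemma by hand.

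The key technical step is therefore the following: I want to produce $c\in S$ with $\{c\}^{\perp\perp}=\{a\}^{\perp\perp}$ and $\{c\}^{\perp\perp}\subseteq\{b^*\}^{\perp\perp}$, which forces $\{c\}^{\perp\perp}=\{b^*\}^{\perp\perp}$ and then $bc$ witnesses $\{c^*\}^{\perp\perp}=\{a^*\}^{\perp\perp}\sim\ldots$ — but that lands on $\{a^*\}^{\perp\perp}$, not $\{a\}^{\perp\perp}$, so I should instead track the ``$\mathrm{L}$'' side. Let me restructure: I expect the cleanest route is to show directly that $\{ba\}^\perp=\{a\}^\perp$ and $\{(ba)^*\}^\perp=\{a^*b^*\}^\perp=\{b^*\}^\perp$ — wait, that is wrong too, because $ba$ has no reason to know about $b$'s full annihilator. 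So the honest plan: first use \autoref{simlem} with the roles arranged so that the inclusion hypothesis it needs becomes one of my two ``$=\{0\}$'' hypotheses in disguise. Concretely, $\{a\}^{\perp\perp}\cap\{b\}^\perp=\{0\}$ rewritten is $\{a\}^{\perp\perp}\subseteq\{b\}^{\perp\perp\perp\perp}=\ldots$ no. I think the right identity is: $\{a\}^{\perp\perp}\cap\{b\}^\perp=\{0\}$ does NOT give an inclusion of double-annihilators. Instead I will have to use \emph{both} hypotheses together: by \autoref{simlem}, $\{a\}^{\perp\perp}\sim(\{a\}^{\perp\perp}b^*)^{\perp\perp}$ always (taking the element $ab^*$), and separately $\{b\}^{\perp\perp}\sim(\{b\}^{\perp\perp}a^*)^{\perp\perp}$; the task is to check that the two target annihilators $(\{a\}^{\perp\perp}b^*)^{\perp\perp}$ and $(\{b\}^{\perp\perp}a^*)^{\perp\perp}$ are related so that transitivity (\autoref{simtran}) closes the loop — most likely by showing each equals $\{ab^*\}^{\perp\perp}$ and then that, under the two disjointness hypotheses, $\{ab^*\}^{\perp\perp}\sim\{ba^*\}^{\perp\perp}$ via... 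I'd compute the perp of $ab^*$.

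Let me state the concrete steps I would carry out. (1) Translate the hypotheses via orthocomplementation into $\{a\}^{\perp\perp}\vee\{b\}^\perp=S=\{a\}^\perp\vee\{b\}^{\perp\perp}$. (2) Put $c=ab^*$ (or $a b$; I'd try $ab^*$ first so that $c^*=ba^*$ is symmetric) and compute $\{c\}^\perp$ directly: for $t\in\{c\}^\perp_+$, $ab^*tba^*=0$, so $b^*tb\in\{a\}^\perp$; combined with $\{a\}^\perp\vee\{b\}^{\perp\perp}=S$ and properness/\autoref{s*st} I hope to force $t$ into $\{a^*\}^\perp$, giving $\{c\}^{\perp\perp}\subseteq\{a^*\}^{\perp\perp}$ — and then something to pin it down exactly. (3) Symmetrically analyze $\{c^*\}^\perp=\{ba^*\}^\perp$ using the other hypothesis to get $\{c^*\}^{\perp\perp}=\{b^*\}^{\perp\perp}$ or $\{b\}^{\perp\perp}$ as appropriate. (4) Conclude that $c$ (or $c$ composed with suitable self-adjoint positive elements as in the proof of \autoref{div}, where one replaces the witness by $a$-on-the-left to restore the correct side) witnesses $\{a\}^{\perp\perp}\sim\{b\}^{\perp\perp}$. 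The main obstacle is step (2)–(3): without orthomodularity, knowing $b^*tb\in\{a\}^\perp$ and $\{a\}^\perp\vee\{b\}^{\perp\perp}=S$ does not obviously let me conclude anything about $t$ itself, so I will probably need to feed $t$ through $a$ or $b$ first — i.e. work not with arbitrary $t\in\{c\}^\perp$ but with elements of the form (something)$\,b$ or use that $\{c\}^{\perp\perp}=(\{a\}^{\perp\perp}b^*)^{\perp\perp}$ from \autoref{simlem} to replace the bare hypothesis by the inclusion $\{a\}^{\perp\perp}\subseteq\{b\}^{\perp\perp}$'s failure being controlled. Getting that bookkeeping exactly right — matching which side ($\mathrm{L}$ vs $\perp$, $a$ vs $a^*$) each hypothesis controls — is where the real work lies; everything else is an application of \autoref{simlem}, \autoref{simtran}, and \autoref{s*st}.
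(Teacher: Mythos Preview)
Your instinct to take a product like $ab$ as the witness and compute its $\perp$ directly is exactly right, and the computation you sketch in step~(2) is close. What you are missing is a one-line reduction that dissolves all the $a$ versus $a^*$ bookkeeping you wrestle with: since $\{a^*a\}^\perp=\{a\}^\perp$ and $\{a^*a\}^{\perp\perp}=\{a\}^{\perp\perp}$ (likewise for $b$), replacing $a,b$ by $a^*a,b^*b$ changes neither the hypotheses nor the conclusion, so one may assume $a,b\in S_+$ from the outset. With $a,b$ self-adjoint there is no distinction between $ab$ and $ab^*$, and $(ab)^*=ba$. Without this reduction your choice $c=ab^*$ actually lands on $\{b^*\}^{\perp\perp}\sim\{a^*\}^{\perp\perp}$ rather than $\{b\}^{\perp\perp}\sim\{a\}^{\perp\perp}$, which is a different statement; and $c=ab$ traps $btb^*$ in $\{a\}^\perp\cap\{b^*\}^{\perp\perp}$, which is not one of the hypotheses either.

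Once $a,b\in S_+$, the proof uses the hypotheses in their \emph{original} intersection form, not the join reformulation you passed to. For $s\in\{ab\}^\perp_+$ one has $abs=0$, hence $bsb\in\{a\}^\perp$; since $\{b\}^{\perp\perp}$ is hereditary and $b=b^*$, also $bsb\in\{b\}^{\perp\perp}$. The first hypothesis $\{a\}^\perp\cap\{b\}^{\perp\perp}=\{0\}$ gives $bsb=0$ immediately, and properness yields $bs=0$, so $\{ab\}^\perp=\{b\}^\perp$. The second hypothesis symmetrically gives $\{ba\}^\perp=\{a\}^\perp$, and $ab$ witnesses the desired equivalence. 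Your attempt to feed in $\{a\}^\perp\vee\{b\}^{\perp\perp}=S$ at the analogous point was both the wrong side (that is the complement of the second hypothesis, not the first) and the wrong form: the argument never touches joins, it just drops a concrete element into the zero intersection.
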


\begin{proof}
By replacing $a$ and $b$ with $a^*a$ and $b^*b$ if necessary, we may assume that $a,b\in S_+$.  Given $s\in S_+$ with $abs=0$ we have $bsb\in \{b\}^{\perp\perp}\cap\{a\}^\perp=\{0\}$ and hence $bs=0$, i.e. $\{ab\}^{\perp}=\{b\}^\perp$ and hence $\{ab\}^{\perp\perp}=\{b\}^{\perp\perp}$.  A symmetric argument gives $\{ba\}^{\perp\perp}=\{a\}^{\perp\perp}$ and hence $ab$ witnesses $\{a\}^{\perp\perp}\sim\{b\}^{\perp\perp}$.
\end{proof}

\begin{thm}\label{simper}
If $\sim$ is reflexive then $\sim$ is weaker than perspectivity.
\end{thm}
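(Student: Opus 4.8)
The plan is to reduce the statement to \autoref{posp'}. Suppose $A$ and $B$ are perspective in $\mathscr{P}(S)^\perp$, so they have a common complement $C\in\mathscr{P}(S)^\perp$, meaning $A\wedge C=\{0\}=B\wedge C$ and $A\vee C=S=B\vee C$. I want to produce $a,b\in S$ with $\{a\}^{\perp\perp}=A$, $\{b\}^{\perp\perp}=B$, and $\{a\}^\perp\cap\{b\}^{\perp\perp}=\{0\}=\{a\}^{\perp\perp}\cap\{b\}^\perp$; then \autoref{posp'} gives $A\sim B$ directly. Since $\sim$ is assumed reflexive, \eqref{simtraneq} holds, i.e. every *-annihilator is of the form $\{s\}^\perp$, equivalently (taking $s^*s$) every *-annihilator is of the form $\{s\}^{\perp\perp}$ for some $s\in S_+$. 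So pick $a\in A_+$ with $\{a\}^{\perp\perp}=A$ and $b\in B_+$ with $\{b\}^{\perp\perp}=B$.

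Now I must check the two intersection conditions of \autoref{posp'}. Consider $\{a\}^\perp\cap\{b\}^{\perp\perp}=A^\perp\cap B$. The key observation is that $A^\perp\leq C$: indeed $C$ is a complement of $A$ in $\mathscr{P}(S)^\perp$, and I claim that in any ortholattice a complement $C$ of $A$ satisfies $A^\perp\le C$ precisely when... — actually this is false in general, so the argument must instead go the other way. The honest route is: $A^\perp\wedge B = A^\perp\wedge B$, and since $A\vee C=S$ we get $A^\perp = A^\perp\wedge(A\vee C)$; if $A^\perp$ were central or neutral this would equal $(A^\perp\wedge A)\vee(A^\perp\wedge C)=A^\perp\wedge C\le C$. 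The relevant neutrality is not available for a general perspectivity axis, so instead I expect the correct move is: $A^\perp\cap B\le A^\perp$ and $A^\perp\cap B$ is a *-annihilator contained in $A^\perp$ with the property that it is "disjoint from $A$"; one shows $A^\perp\cap B=\{0\}$ by using that $C$ complements both, via $A^\perp\cap B\le A^\perp = C^{??}$.

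The cleanest correct approach, and the one I would actually carry out, avoids computing with $A^\perp\cap B$ by brute force: instead I would use that $A$ and $B$ are perspective with axis $C$ to choose $a$ and $b$ witnessing the annihilators \emph{inside the relevant complemented pieces}. Concretely, since $A\vee C=S=A^{\perp\perp}\vee C$ and $A\wedge C=\{0\}$, and similarly for $B$, one shows $A^\perp\le C^{\perp\perp}=C$ fails in general but $A^\perp\cap B\le C$ may still be forced: $A^\perp\cap B$ is orthogonal to $A$ hence $A^\perp\cap B\le A^\perp$; it is a subset of $B$ hence $(A^\perp\cap B)\wedge C\le B\wedge C=\{0\}$; and $(A^\perp\cap B)\vee C \le S$ with... — here I would invoke that $C$ is a \emph{complement} and the interval below $A^\perp$ behaves well, or pass through \autoref{Lbotprp} to argue at the element level: take $x\in (A^\perp\cap B)_+$, then $x\in B=\{b\}^{\perp\perp}$ and $x\in A^\perp$, and use $ax=0$ together with perspectivity to derive $x=0$. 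The \textbf{main obstacle} is exactly this: extracting, from the purely lattice-theoretic hypothesis "common complement", the algebraic consequence $A^\perp\cap B=\{0\}=A\cap B^\perp$ needed to feed \autoref{posp'}. I expect this to follow because in $\mathscr{P}(S)^\perp$ one has $A\vee C = (A^\perp\cap C^\perp)^\perp$, so $A\vee C=S$ means $A^\perp\cap C^\perp=\{0\}$, i.e. $A^\perp\le C^{\perp\perp}=C$; likewise $B^\perp\le C$. Then $A^\perp\cap B\le C\cap B=C\wedge B=\{0\}$ and symmetrically $A\cap B^\perp\le A\cap C=\{0\}$, and \autoref{posp'} applies to give $A\sim B$. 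That last chain — using $\bigvee=(\bigcap{}^\perp)^\perp$ from \autoref{Lperpequiv} to convert $A\vee C=S$ into $A^\perp\cap C^\perp=\{0\}$ hence $A^\perp\le C$ — is the crux, and once it is in hand the proof is immediate.
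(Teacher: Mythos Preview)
Your final chain contains a genuine error. From $A\vee C=S$ you correctly deduce $A^\perp\cap C^\perp=\{0\}$, but the step ``hence $A^\perp\le C$'' does not follow: the implication $X\wedge Y^\perp=\{0\}\Rightarrow X\le Y$ fails already in orthomodular lattices (take two distinct rank-one projections in $M_2(\mathbb{C})$), and $\mathscr{P}(S)^\perp$ need not even be orthomodular, as the paper itself remarks after \autoref{comann}. You actually noticed this obstacle earlier in your write-up (``The relevant neutrality is not available for a general perspectivity axis''), and it does not go away.

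The fix is to stop trying to compare $A$ and $B$ directly and instead pass through $C^\perp$. The two conditions $A\wedge C=\{0\}$ and $A^\perp\cap C^\perp=\{0\}$ are \emph{exactly} the hypotheses of \autoref{posp'} applied to generators $a,c$ of $A$ and $C^\perp$ (using reflexivity to produce such generators), yielding $A\sim C^\perp$. The same argument with $B$ in place of $A$ gives $B\sim C^\perp$, and transitivity (\autoref{simtran}) finishes: $A\sim C^\perp\sim B$. This is precisely the paper's proof, and it sidesteps the need for $A^\perp\cap B=\{0\}$, which is simply not available from perspectivity alone.
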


\begin{proof}
If $A,B\in\mathscr{P}(S)^\perp$ have a common complement $C\in\mathscr{P}(S)^\perp$ then, by \autoref{posp'}, $A\sim C^\perp\sim B$ and hence, by \autoref{simtran}, $A\sim B$.
\end{proof}

\begin{thm}
If $\sim$ is reflexive, every $A\in\mathscr{P}(S)^\perp$ is $\sim$-finite and $\mathscr{P}(S)^\triangledown$ is the entire centre of $\mathscr{P}(S)^\perp$ then $\mathscr{P}(S)^\perp$ is modular and $\sim$ coincides with perspectivity.
\end{thm}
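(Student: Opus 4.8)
The plan is to obtain orthomodularity first, then bootstrap to modularity via a finiteness property, and finally pin down perspectivity.

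\emph{Orthomodularity.} An ortholattice is orthomodular exactly when $A\leq B$ together with $A^\perp\wedge B=\{0\}$ forces $A=B$, so I would verify this implication. Given such $A\leq B$, note $B^\perp\leq A^\perp$ and hence $A\wedge B^\perp\leq A\wedge A^\perp=\{0\}$ as well. Reflexivity of $\sim$, i.e.\ \eqref{simtraneq}, lets us write $A=\{a\}^{\perp\perp}$ and $B=\{b\}^{\perp\perp}$ for suitable $a,b\in S$ (take $a=s^*s$, where $A^\perp=\{s\}^\perp$, and similarly for $b$), so \autoref{posp'} applies and gives $A\sim B$. Since $A\subseteq B$ and $B$ is $\sim$-finite, $A=B$. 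Thus $\mathscr{P}(S)^\perp$ is orthomodular.

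\emph{Modularity.} First I would record that no $A\in\mathscr{P}(S)^\perp$ is perspective to a proper subelement: if $B\subsetneq A$ had a common complement with $A$, then $A\sim B$ by \autoref{simper}, contradicting the $\sim$-finiteness of $A$. Now $\mathscr{P}(S)^\perp$ is a complete orthomodular lattice whose centre is $\mathscr{P}(S)^\triangledown$ by hypothesis, and $\mathscr{P}(S)^\triangledown$ is a complete sublattice of $\mathscr{P}(S)^\perp$ closed under $^\perp$ (meets are intersections by \autoref{Lperpequiv}, and joins agree by the remark following it). I would then invoke the orthomodular incarnation of the classical ``finite $\Rightarrow$ modular'' principle: a complete orthomodular lattice with a complete Boolean centre and no element perspective to a proper subelement is modular (cf.\ \cite{MaedaMaeda1970} or \cite{Kalmbach1983} for the lattice-theoretic statement, or else one routes this through the abstract finiteness results of \cite{Bice2014b} applied to $\triangledown$). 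Hence $\mathscr{P}(S)^\perp$ is modular.

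\emph{$\sim$ equals perspectivity.} One inclusion, perspectivity $\Rightarrow\ \sim$, is \autoref{simper}. For the converse, suppose $A\sim B$. By \autoref{simcen}, $A^\triangledown=B^\triangledown$, so $A$ and $B$ share a central cover $Z:=A^{\triangledown\triangledown}=B^{\triangledown\triangledown}\in\mathscr{P}(S)^\triangledown$, and I would pass to the interval $[\{0\},Z]$, which, being an interval below a central element of a complete modular orthomodular lattice, is again a complete modular orthomodular lattice, and in which $A$ and $B$ each have full central support. There I would build a common complement of $A$ and $B$ starting from \autoref{posp'} (which says that every complement $C$ of $A$ already satisfies $A\sim C^\perp$) and correcting $C$ using a comparability argument together with \autoref{cendiv}, \autoref{CSB}, and the $\sim$-finiteness of every element: on each central summand $C^\perp$ is $\precsim$-comparable to $B$, and finiteness forces the two sides to coincide after adjusting $C$ within that summand, so that $C$ becomes a genuine common complement of $A$ and $B$, whence they are perspective.

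\emph{Main obstacle.} The delicate point is the modularity step: matching the hypothesis that every $A$ is $\sim$-finite (a statement about $\sim$, fed in via \autoref{simper}) to the precise finiteness condition required by whichever lattice-theoretic modularity criterion one cites, and handling the possibly infinite centre $\mathscr{P}(S)^\triangledown$ there. Relatedly, the forward direction $\sim\ \Rightarrow$ perspectivity is delicate because there is no addition in a bare $*$-semigroup, so the common complement cannot be produced by an explicit ``$v+w$ is a symmetry'' construction as in the von Neumann algebra case and must instead be assembled by a comparability argument inside central summands, which requires one to first secure the relevant comparability in this generality.
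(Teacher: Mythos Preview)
Your orthomodularity step is correct and essentially the same as the paper's (the paper phrases it as ``no benzene ring'', but your direct use of \autoref{posp'} plus $\sim$-finiteness is the same argument unpacked).

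For modularity, your appeal to an unnamed ``classical principle'' is really just what the paper proves from scratch: using Jacobson's criterion (modularity is equivalent to: $p\vee r=q\vee r$, $p\wedge r=q\wedge r$, $p\leq q\Rightarrow p=q$), one massages $p,q$ via $p'=p\wedge(p\wedge r)^\perp$ and $p''=p'\vee(p'\vee r)^\perp$ so that $p''$ and $q''$ both have $r$ as a complement, hence are perspective, hence equal by finiteness; orthomodularity then gives $p=q$. No centre hypothesis is needed here, and no black-box theorem beyond Jacobson's characterization.

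The genuine gap is in your final step, $\sim\Rightarrow$ perspectivity. You propose to use a comparability argument together with \autoref{cendiv} and \autoref{CSB}, but look at the hypotheses: \autoref{gencom} requires $\sim$ to be $\perp$-complete and \autoref{CSB} requires $\sim$ to be $\perp$-additive, and \emph{neither} is assumed in this theorem (only reflexivity is). So the comparability you need for $\sim$ simply is not available from the *-semigroup side. The paper sidesteps this entirely: once $\mathscr{P}(S)^\perp$ is known to be complete and modular, Kaplansky's theorem (\cite{Kaplansky1955}) says it is a continuous geometry, and then von Neumann's comparability theorem for continuous geometries (\cite{vonNeumann1960} Part III Theorem 2.7) supplies, for any $q,r$, a central $p$ with $p\wedge q$ perspective to some $s\leq p\wedge r$ and $p^\perp\wedge r$ perspective to some $t\leq p^\perp\wedge q$. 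Combining this \emph{lattice-theoretic} comparability with \autoref{cendiv} (which only needs reflexivity), transitivity, and $\sim$-finiteness forces $s=p\wedge r$ and $t=p^\perp\wedge q$, whence $q$ and $r$ are perspective. In short, the comparability you need comes from continuous-geometry theory applied to the lattice, not from the *-semigroup tools you cite.
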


\begin{proof}
As $\sim$ is finite on all of $\mathbb{P}=\mathscr{P}(S)^\perp$, so is perspectivity, and any ortholattice $\mathbb{P}$ in which perspectivity is finite must be modular, as we now show.  Firstly, $\mathbb{P}$ must be orthomodular because it can not contain a copy of the benzene ring (see \cite{Beran1985} Fig. 7a).  By \cite{Jacobson1985} Theorem 8.4, modularity is equivalent to
\begin{equation*}\label{persp}
p\vee r=q\vee r,\ p\wedge r=q\wedge r\textrm{ and }p\leq q\quad\Rightarrow\quad p=q,
\end{equation*}
for all $p,q,r\in\mathbb{P}$.  Given $p,q,r\in\mathbb{P}$ with $p\vee r=q\vee r$ and $p\wedge r=q\wedge r$ we can define $p'=p\wedge(p\wedge r)^\perp$ and $p''=p'\vee(p'\vee r)^\perp$ and likewise for $q'$ and $q''$.  Then $p''$ and $q''$ have $r$ as a complement, and if $p\leq q$ then $p''\leq q''$ so $p''=q''$, as perspectivity is finite.  Orthomodularity now implies $p'=q'$ and $p=q$, and hence $\mathbb{P}$ is modular.

As $\mathbb{P}$ is complete, it is a continuous geometry, by \cite{Kaplansky1955}.  Now we follow the proof of \cite{Kaplansky1951} Theorem 6.6(c).  Specifically, by \cite{vonNeumann1960} Part III Theorem 2.7, for any $q,r\in\mathbb{P}$, we have central $p\in\mathbb{P}$ with $p\wedge q$ perspective to some $s\leq p\wedge r$ and $p^\perp\wedge r$ perspective to some $t\leq p^\perp\wedge q$.  So if $q\sim r$ then $s\sim p\wedge q\sim p\wedge r$, by \autoref{cendiv} and the fact $\mathscr{P}(S)^\triangledown$ is the centre of $\mathbb{P}$.  As $\sim$ is transitive and finite, $s=p\wedge r$ and, likewise, $t=p^\perp\wedge q$.  So $p\wedge q$ and $p\wedge r$ are perspective, as are $p^\perp\wedge q$ and $p^\perp\wedge r$ which, as $p$ is central, means $q$ and $r$ are also perspective.
\end{proof}

We also have an analog of the Cantor-Schroeder-Bernstein theorem.

\begin{thm}\label{CSB}
If $\sim$ is reflexive and $\perp$-additive then, for all $A,B\in\mathscr{P}(S)^\perp$, \[A\precsim B\precsim A\quad\Rightarrow\quad A\sim B.\]
\end{thm}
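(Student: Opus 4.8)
The plan is to mimic the classical Cantor–Schroeder–Bernstein argument for Murray–von Neumann equivalence, which builds an infinite ``swapping'' sequence of mutually orthogonal pieces and then sums them up using a completeness/additivity property of the equivalence. Here the role of countable additivity of $\sim_{\mathrm{MvN}}$ is played by $\perp$-additivity of $\sim$ (for finite sums) together with reflexivity of $\sim$, which by \autoref{simtraneq} guarantees $\mathscr{P}(S)^\perp=\{\{s\}^\perp:s\in S\}$ and, via \autoref{div}, gives us the divisibility we need to split suprema.

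First I would set up the data: from $A\precsim B$ pick $C\in\mathscr{P}(S)^\perp$ with $A\sim C\subseteq B$, and from $B\precsim A$ pick $D$ with $B\sim D\subseteq A$. Composing the equivalence $A\sim C$ with the inclusion $C\subseteq B$ and then with $B\sim D$, I get a single $\sim$-equivalence $A\sim D$ together with the nested data $D\subseteq A$. So without loss of generality the problem reduces to: if $B\subseteq A$ and $A\sim B$, then $A\sim B$ —wait, that is trivial; the genuine reduction is to the situation $A\sim B$ with $B\subseteq A$ \emph{already forced}, which is not what we have. The correct reduction is the standard one: we have $A\sim B$ (from chaining) but also $A'\subseteq A$ with $A\sim A'$ coming around the loop, and we must patch. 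Concretely: let $A_0=A$, let $A_1=D\subseteq A_0$ be the image of $A$ under going $A\precsim B\precsim A$, so $A_0\sim A_1$; iterate to get a decreasing chain $A_0\supseteq A_1\supseteq A_2\supseteq\cdots$ with $A_n\sim A_{n+1}$ for all $n$. Using \autoref{div} (legitimate since $\sim$ is reflexive) I would split each ``annulus'' $A_n$ as a supremum $A_n=\bigvee_{k\ge n} R_k$ where $R_n\sim R_{n+1}\sim\cdots$; more carefully, I want $A_n\ominus A_{n+1}$ as a genuine complement inside $A_n$, which requires orthomodularity of $\mathscr{P}(S)^\perp$—and this need not hold in general. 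This is where the argument must be routed through $\perp$: rather than taking differences in the lattice, I would use \autoref{div} directly, writing $A=\bigvee\mathcal{A}$ for a family $\mathcal{A}$ exhibiting the annuli, map it forward under the $\sim$ to $B$, and match the pieces using $\perp$-additivity on the mutually orthogonal sub-families.

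The cleanest route, which I expect the author to take, is: from $A\precsim B\precsim A$, produce $s,t\in S$ with $\{s\}^{\perp\perp}=A$, $\{s^*\}^{\perp\perp}\subseteq B$, $\{t\}^{\perp\perp}=B$, $\{t^*\}^{\perp\perp}\subseteq A$; then $ts$ witnesses $A\precsim A$ via some $A_1=(As)^{\perp\perp}$-type image contained in $A$, and by \autoref{simlem} the maps $X\mapsto(Xs)^{\perp\perp}$, $X\mapsto(Xt)^{\perp\perp}$ transport orthogonal families to orthogonal families (the example after \autoref{nonorthodiv} shows they needn't preserve \emph{general} infima, but along a decreasing chain of images one gets a descending chain of mutually-``complementary annuli'' that \emph{are} orthogonal because they are defined as $\{ \text{annulus}_n\}^{\perp\perp}$ and land in the required places). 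Then set $E=\bigvee_n(\text{odd annuli})$, $F=\bigvee_n(\text{even annuli})$, use $\perp$-additivity (extended from pairs to countable orthogonal families by iterating, together with \autoref{div} to handle the tail) to conclude $A=E\vee F\sim F\vee E'=B$, where $E'$ is the shifted copy of $E$ sitting inside $B$.

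The main obstacle I anticipate is precisely the passage from finite $\perp$-additivity to the countably infinite orthogonal sum needed to close the telescoping argument: $\perp$-additivity as defined only gives $p\vee q\sim p'\vee q'$ for single pairs, so summing the infinitely many annuli requires either an induction producing a coherent matching of \emph{all} finite partial sums plus a limiting step, or an appeal to \autoref{div} to reorganize the suprema so that only a single application of additivity to a pair is needed. Getting this bookkeeping right—ensuring every annulus is genuinely orthogonal to the next (not merely that $A\sim B$ abstractly) and that the two reorganized halves $E,F$ really satisfy $E\perp F$ inside the correct ambient *-annihilator—is the delicate part; the rest is the familiar CSB shuffle, and reflexivity of $\sim$ is exactly what makes \autoref{div} applicable at each splitting step.
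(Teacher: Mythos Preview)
Your proposal has a genuine gap that you yourself identify but do not close: the iterative Cantor--Schroeder--Bernstein shuffle needs to sum \emph{infinitely} many mutually orthogonal annuli, while the hypothesis gives only \emph{finite} $\perp$-additivity. Your suggested workarounds do not work here: \autoref{div} produces a (possibly non-orthogonal) splitting of a supremum, so it cannot be fed back into $\perp$-additivity, and iterating pairwise additivity only ever gives finite sums---there is no limiting mechanism available in an arbitrary proper *-semigroup to pass to $\bigvee_n$. The related worry you raise about orthomodularity is also real: in $\mathscr{P}(S)^\perp$ one cannot in general form the ``annulus'' $A_n\ominus A_{n+1}$ as a genuine orthogonal complement inside $A_n$, so the sequence of disjoint pieces your argument rests on may not exist.

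The paper avoids both obstacles with a single idea: instead of iterating, it applies Tarski's fixed point theorem to the order-preserving map $C\mapsto((Ca^*)^{\perp_B}b^*)^{\perp_A}$ on the complete lattice $\mathscr{P}(A)^{\perp_A}$, obtaining a fixed point $F$ directly. Then \autoref{appb} gives $F\sim(Fa^*)^{\perp_B\perp_B}$ and $F^{\perp_A}\sim(Fa^*)^{\perp_B}$, so \emph{one} application of $\perp$-additivity yields $F\vee F^{\perp_A}\sim(Fa^*)^{\perp_B\perp_B}\vee(Fa^*)^{\perp_B}$. The lack of orthomodularity means $F\vee F^{\perp_A}$ may be strictly smaller than $A$, but $A^\perp$ is a common complement of both, so \autoref{simper} gives $A\sim F\vee F^{\perp_A}$; likewise on the $B$ side, and transitivity finishes. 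The fixed-point trick is precisely what collapses your infinite telescoping sum into a single pair, making finite $\perp$-additivity suffice.
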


\begin{proof}
We adapt the argument given in \cite{Berberian1972} \S1 Theorem 1.  Specifically, take $a,b\in S$ with $\{a\}^{\perp\perp}=A$, $\{a^*\}^{\perp\perp}\subseteq B$, $\{b\}^{\perp\perp}=B$ and $\{b^*\}^{\perp\perp}\subseteq A$.  By \autoref{nonorthodiv}, we have an order preserving map on $\mathscr{P}(A)^{\perp_A}$ defined by \[C\mapsto((Ca^*)^{\perp_B}b^*)^{\perp_A}.\] As $\mathscr{P}(A)^{\perp_A}$ is a complete lattice, it has a fixed point $F$, by Tarski's fixed point theorem.  By \autoref{appb},
\begin{eqnarray*}
F^{\perp_A}=((Fa^*)^{\perp_B}b^*)^{\perp_A\perp_A} &\sim& (Fa^*)^{\perp_B}\qquad\textrm{and}\\
F &\sim& (Fa^*)^{\perp_B\perp_B}\qquad\textrm{so, by $\perp$-additivity,}\\
F\vee F^{\perp_A} &\sim& (Fa^*)^{\perp_B\perp_B}\vee(Fa^*)^{\perp_B}.
\end{eqnarray*}
While $F\vee_AF^{\perp_A}=A$, we may have $F\vee F^{\perp_A}<A$, but in any case $A^\perp$ is a common complement of $F\vee F^{\perp_A}$ and $A$ in $\mathscr{P}(S)^\perp$ so, by \autoref{simper}, $A\sim F\vee F^{\perp_A}$.  Likewise, $B\sim(Fa^*)^{\perp_B\perp_B}\vee(Fa^*)^{\perp_B}$ and hence $A\sim B$, by \autoref{simtran}.
\end{proof}

We now show $\precsim$ satisfies a version of generalized comparability (see \cite{Berberian1972} \S14).

\begin{thm}\label{gencom}
If $\sim$ is reflexive and $\perp$-complete then, for all $A,B\in\mathscr{P}(S)$, we have $I\in\mathscr{P}(S)^\triangledown$ such that $A\cap I\precsim B\cap I$ and $B\cap I^\perp\precsim A\cap I^\perp$.
\end{thm}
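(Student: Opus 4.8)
The plan is to run the classical maximal-family proof of the comparison theorem entirely inside $\mathscr{P}(S)^\perp$, using $\perp$-completeness of $\sim$ to add up orthogonal equivalences and the complete Boolean algebra $\mathscr{P}(S)^\triangledown$ in place of the centre of a von Neumann algebra. First I would invoke Zorn's Lemma to pick a family $\{(A_i,B_i)\}_{i\in\Lambda}$ in $\mathscr{P}(S)^\perp$ maximal for the conjunction of: $A_i\subseteq A$, $B_i\subseteq B$, the $A_i$ mutually orthogonal, the $B_i$ mutually orthogonal, and $A_i\sim B_i$ --- discarding trivial pairs, so each $A_i,B_i\neq\{0\}$ and all are pairwise distinct. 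A union of such families is one, so a maximal family exists. The $\{A_i\}$ and $\{B_i\}$ are $\perp$-subsets matched bijectively by $A_i\sim B_i$, and $\perp$-completeness is in force (note it already entails $\perp$-additivity, hence also \autoref{CSB}, by applying it to two-element $\perp$-subsets), so $A_0:=\bigvee_iA_i\sim\bigvee_iB_i=:B_0$ with $A_0\subseteq A$ and $B_0\subseteq B$. Put $C:=A\wedge A_0^\perp$ and $D:=B\wedge B_0^\perp$. Maximality forbids nonzero $C'\subseteq C$ and $D'\subseteq D$ with $C'\sim D'$: since $C'\subseteq C\subseteq A_0^\perp\subseteq A_i^\perp$ and $D'\subseteq D\subseteq B_i^\perp$, such a pair could be adjoined.

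The heart of the matter is the claim that, under this last condition, $C^{\triangledown\triangledown}\wedge D^{\triangledown\triangledown}=\{0\}$, i.e.\ $C\triangledown D$; this is the *-annihilator replacement for ``projections with no equivalent nonzero subprojections have orthogonal central covers''. I would prove the contrapositive. If $C\triangledown D$ fails then $cud\neq0$ for some $c\in C$, $d\in D$, $u\in S$; set $v:=cud$. By \autoref{s*st} one has $\{v\}^{\perp\perp}=\{v^*v\}^{\perp\perp}$ and $\{v^*\}^{\perp\perp}=\{vv^*\}^{\perp\perp}$, while *-annihilators are hereditary (\autoref{Lbotprp}\eqref{Lbotprp3}), so from $v^*v=d^*(u^*c^*cu)d\in d^*Sd\subseteq D$ we get $\{v\}^{\perp\perp}\subseteq D$ and from $vv^*=c(udd^*u^*)c^*\in cSc^*\subseteq C$ we get $\{v^*\}^{\perp\perp}\subseteq C$. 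Both are nonzero since $v\neq0$ (properness), and $v^*$ witnesses $\{v^*\}^{\perp\perp}\sim\{v\}^{\perp\perp}$, contradicting the hypothesis. The point is that this computation bypasses polar decomposition, the tool one would otherwise reach for here.

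Now set $I:=(C^{\triangledown\triangledown})^\perp\in\mathscr{P}(S)^\triangledown$. Since $\mathscr{P}(S)^\triangledown$ lies in the centre of $\mathscr{P}(S)^\perp$ (\autoref{annidealcentre}) and is a Boolean subalgebra, the claim gives $C\subseteq I^\perp$ and $D\subseteq D^{\triangledown\triangledown}\subseteq I$, and \autoref{cendiv} transports the equivalence to $A_0\wedge I\sim B_0\wedge I$ and $A_0\wedge I^\perp\sim B_0\wedge I^\perp$. Applying \autoref{centhm} to the bi-hereditary *-subsemigroup $A$ with the central ideal $A_0^{\triangledown\triangledown}$ gives $A=(A\wedge A_0^{\triangledown\triangledown})\vee(A\wedge(A_0^{\triangledown\triangledown})^\perp)$; and $(A_0^{\triangledown\triangledown})^\perp=A_0^\triangledown\subseteq A_0^\perp$ (using $\triangledown\subseteq\perp$ on $S$, \autoref{triequiv}), so $A\wedge(A_0^{\triangledown\triangledown})^\perp\subseteq C\subseteq I^\perp$, and neutrality of the central $I$ then forces $A\wedge I=A\wedge A_0^{\triangledown\triangledown}\wedge I$; symmetrically $B\wedge I^\perp=B\wedge B_0^{\triangledown\triangledown}\wedge I^\perp$. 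Since $A_0^{\triangledown\triangledown}=B_0^{\triangledown\triangledown}$ (\autoref{simcen}) and $A_0^{\triangledown\triangledown}\wedge I=(A_0\wedge I)^{\triangledown\triangledown}$ (a routine consequence of $A_0=(A_0\wedge I)\vee(A_0\wedge I^\perp)$), this reduces matters to the equivalences already obtained, and one concludes $A\wedge I\precsim B\wedge I$ and $B\wedge I^\perp\precsim A\wedge I^\perp$, as wanted.

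I expect the central-cover claim of the second paragraph to be the main obstacle --- it is exactly the place where, in the von Neumann or AW* setting, one uses the projection/partial-isometry picture, and the trick that $v=cud$ already satisfies $vv^*\in C$ and $v^*v\in D$ by hereditariness is what makes it go through without that picture. A secondary subtlety is the final identification of $A\wedge I$ with the $I$-part of $A_0$'s central block: because $\mathscr{P}(S)^\perp$ need not be orthomodular one cannot simply write $A=A_0\vee(A\wedge A_0^\perp)$, which is why the argument is routed through \autoref{centhm} and the fact that $\mathscr{P}(S)^\triangledown$, being a polarity image, is automatically complete and Boolean.
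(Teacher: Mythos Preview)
Your maximal-family construction and the central claim $C\triangledown D$ are correct and match the paper's argument (the paper phrases it as $C^{\perp_A}SD^{\perp_B}=\{0\}$, and your $C,D$ are precisely the paper's $C^{\perp_A},D^{\perp_B}$). The genuine gap is in the last paragraph: from $A\wedge I=A\wedge A_0^{\triangledown\triangledown}\wedge I$ and $A_0\wedge I\sim B_0\wedge I$ you cannot conclude $A\wedge I\precsim B\wedge I$. All you have established is $A\wedge I\subseteq(A_0\wedge I)^{\triangledown\triangledown}$, and sharing a central cover gives no $\precsim$-relation. To see the problem concretely, suppose (as can happen when orthomodularity fails) that $A_0\subsetneq A$ yet $A\wedge A_0^\perp=\{0\}$; then your $C=\{0\}$, $I=S$, and you are asserting $A\precsim B$ from $A_0\sim B_0\subseteq B$ alone, which is unjustified since $A_0\subsetneq A$.

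The paper closes exactly this gap with perspectivity (\autoref{simper}), and in fact your own inclusion $C\subseteq I^\perp$ already sets this up: it gives $A\cap I\subseteq A\cap C^\perp=A_0^{\perp_A\perp_A}$. The key observation is that $(A_0^{\perp_A\perp_A})^\perp=A^\perp\vee A_0^{\perp_A}$ is a common complement of $A_0$ and $A_0^{\perp_A\perp_A}$ in $\mathscr{P}(S)^\perp$ (one checks directly that $A_0\vee(A_0^{\perp_A\perp_A})^\perp=S$ and $A_0\wedge(A_0^{\perp_A\perp_A})^\perp=\{0\}$, using only $A_0\subseteq A_0^{\perp_A\perp_A}$), so \autoref{simper} yields $A_0^{\perp_A\perp_A}\sim A_0$. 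Then $A\cap I=A_0^{\perp_A\perp_A}\cap I\sim A_0\cap I\sim B_0\cap I\subseteq B\cap I$ by \autoref{cendiv} and transitivity. Replace your central-cover detour with this perspectivity step; it is precisely the device that handles the failure of orthomodularity you yourself flag in the final sentence.
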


\begin{proof}
Let $(A_\lambda),(B_\lambda)\subseteq\mathscr{P}(S)^\perp$ be a maximal $\perp$-subsets of $A$ and $B$ respectively such that $A_\lambda\sim B_\lambda$, for all $\lambda\in\Lambda$.  As $\sim$ is $\perp$-complete, $\bigvee A_\lambda\sim\bigvee B_\lambda$.  As $A^\perp\vee(\bigvee A_\lambda)^{\perp_A}$ is a common complement of $\bigvee A_\lambda$ and $C=(\bigvee A_\lambda)^{\perp_A\perp_A}$, we have $C\sim\bigvee A_\lambda$, by \autoref{simper}.  Likewise, $D=(\bigvee B_\lambda)^{\perp_B\perp_B}\sim\bigvee B_\lambda$ and hence $C\sim D$, by \autoref{simtran}.  By maximality, we must have $C^{\perp_A}SD^{\perp_B}=\{0\}$ and thus taking $I=C^{\perp_A\triangledown}$ or $I=D^{\perp_B\triangledown\triangledown}$ we have $C^{\perp_A}\subseteq I^\perp$ and $D^{\perp_B}\subseteq I$.  Therefore, $A\cap I\subseteq C^{\perp_A\perp_A}=C$ so, by \autoref{cendiv}, $A\cap I=C\cap I\sim D\cap I\subseteq B\cap I$, i.e. $A\cap I\precsim B\cap I$.  A symmetric argument also yields $B\cap I^\perp\sim A\cap I^\perp$.
\end{proof}

If $I$ is a *-subsemigroup of $S$, we write $\sim_I$ for the $\sim$ relation defined within $I$.

\begin{cor}\label{simBsim}
If $\sim$ is reflexive and $A\in\mathscr{P}(S)^\perp$, $\sim_A$ is $\sim$ restricted to $\mathscr{P}(A)^{\perp_A}$.
\end{cor}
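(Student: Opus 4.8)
The plan is to prove the two inclusions $\sim_A\ \subseteq\ \sim$ and $\sim\ \subseteq\ \sim_A$ on $\mathscr{P}(A)^{\perp_A}$ separately, after two preliminary observations. (i) $\mathscr{P}(A)^{\perp_A}\subseteq\mathscr{P}(S)^\perp$, since each $B\in\mathscr{P}(A)^{\perp_A}$ equals $(B^{\perp_A})^\perp\cap A$, the meet in $\mathscr{P}(S)^\perp$ of the two *-annihilators $(B^{\perp_A})^\perp$ and $A$; so $\sim$ is defined on $\mathscr{P}(A)^{\perp_A}$, and (as $A$ is a proper *-semigroup) \autoref{Lperpequiv} makes $\mathscr{P}(A)^{\perp_A}$ an ortholattice with intersections for meets and $B\wedge B^{\perp_A}=\{0\}$. (ii) For every $r\in S$ one has $r^*r\in\{r\}^{\perp\perp}$: since $r^*r\in S_+$ and $\{r^*r\}^\perp=\{r\}^\perp$ by \autoref{s*st}, this reduces to $a\in\{a\}^{\perp\perp}$ for $a\in S_+$, which holds because for $x\in\{a\}^\perp$ we get $xa=(ax^*)^*=0$ (using $a=a^*$), i.e. $x\perp a$. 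Moreover, for $s\in A$ we have $\{s\}^{\perp\perp}\subseteq A^{\perp\perp}=A$ and $\{s\}^{\perp_A}=\{s\}^\perp\cap A\subseteq\{s\}^\perp$, whence $\{s\}^{\perp\perp}\subseteq\{s\}^{\perp_A\perp_A}$; all of this applies verbatim inside the proper *-semigroup $A$ as well.

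For $\sim_A\Rightarrow\sim$, suppose $s\in A$ witnesses $B\sim_A C$, so $\{s\}^{\perp_A\perp_A}=B$ and $\{s^*\}^{\perp_A\perp_A}=C$. Put $P=\{s\}^{\perp\perp}$ and $P'=\{s^*\}^{\perp\perp}$; then $s$ witnesses $P\sim P'$ in $S$, while $P\subseteq B$ and $P'\subseteq C$ by (ii). I would then check that $B^\perp$ is a common complement of $P$ and $B$ in $\mathscr{P}(S)^\perp$: $P\cap B^\perp=\{0\}$ since $P\subseteq B$, and since $B\subseteq A$,
\[P^\perp\cap B=\{s\}^\perp\cap A\cap B=\{s\}^{\perp_A}\wedge B=\{s\}^{\perp_A}\wedge\{s\}^{\perp_A\perp_A}=\{0\},\]
so $P\vee B^\perp=(P^\perp\cap B)^\perp=S$ and $B\vee B^\perp=S$. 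By \autoref{simper} (this is where reflexivity of $\sim$ enters) $P\sim B$, and symmetrically $P'\sim C$; then $B\sim P\sim P'\sim C$ gives $B\sim C$ by the symmetry and transitivity of $\sim$ (\autoref{simtran}).

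For $\sim\Rightarrow\sim_A$, suppose $t\in S$ witnesses $B\sim C$ with $B,C\in\mathscr{P}(A)^{\perp_A}$, so $\{t\}^{\perp\perp}=B$ and $\{t^*\}^{\perp\perp}=C$. By (ii), $t^*t\in\{t\}^{\perp\perp}=B\subseteq A$ and $tt^*\in\{t^*\}^{\perp\perp}=C\subseteq A$, so $t\in\sqrt A\cap\sqrt A^*\subseteq A$ because $A$, being a *-annihilator, is quasi-rooted (\autoref{Lbotprp}). Now $\{t\}^{\perp_A\perp_A}\supseteq\{t\}^{\perp\perp}=B$ by (ii), and conversely $\{t\}^{\perp_A\perp_A}=\{t^*t\}^{\perp_A\perp_A}\subseteq B$ since $t^*t\in B$ and $B=B^{\perp_A\perp_A}$ is $\perp_A$-closed; hence $\{t\}^{\perp_A\perp_A}=B$, and likewise $\{t^*\}^{\perp_A\perp_A}=C$, so $t\in A$ witnesses $B\sim_A C$.

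The main obstacle is the first inclusion. For $s\in A$ the $\perp_A$-annihilator $\{s\}^{\perp_A\perp_A}$ can be strictly larger than the $S$-annihilator $\{s\}^{\perp\perp}$ — this is exactly the discrepancy responsible for $[\{0\},A]$ properly containing $\mathscr{P}(A)^{\perp_A}$ when $A$ is not commutative — so a witness of $B\sim_A C$ need not witness $B\sim C$ directly, and passing through the perspectivity of $P$ with $B$ via \autoref{simper} is precisely what closes the gap. Everything else is routine bookkeeping with the polarity identities, \autoref{s*st}, and the quasi-rootedness of *-annihilators.
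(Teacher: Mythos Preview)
Your proof is correct and follows essentially the same approach as the paper's: for $\sim\Rightarrow\sim_A$ you both push the witness into $A$ via quasi-rootedness and then identify the two closures, and for $\sim_A\Rightarrow\sim$ you both observe that $B^\perp=\{s\}^{\perp_A\perp_A\perp}$ is a common complement of $\{s\}^{\perp\perp}$ and $B$, invoking \autoref{simper} and transitivity. Your write-up is more explicit about the preliminary facts (notably $\mathscr{P}(A)^{\perp_A}\subseteq\mathscr{P}(S)^\perp$), but the argument is the same.
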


\begin{proof}
If $a\in S$ and $\{a\}^{\perp\perp}\in\mathscr{P}(A)^{\perp_A}$ then $\{a\}^{\perp_A\perp_A}=\{a\}^{\perp\perp\perp_A\perp_A}=\{a\}^{\perp\perp}$.  So if $a$ witnesses $B\sim C$, for $B,C\in\mathscr{P}(A)^{\perp_A}$, then it also witnesses $B\sim_AC$ as long as $a\in A$, which holds because $a^*a,aa^*\in A$ and *-annihilators are quasi-rooted.

On the other hand, for any $a\in A$, $\{a\}^{\perp_A\perp_A\perp}$ is a common complement of $\{a\}^{\perp_A\perp_A}$ and $\{a\}^{\perp\perp}$ and likewise for $a^*$.  So by \autoref{simper}, if $a$ witnesses $B\sim_AC$ then $B\sim\{a\}^{\perp\perp}\sim\{a^*\}^{\perp\perp}\sim C$ which, by \autoref{simtran}, gives $B\sim C$.
\end{proof}

By \autoref{ess}, the following result applies to any essential ideal $I$ of $S$.

\begin{thm}\label{simsub}
If $I=I^*$ is a bi-ideal with $\{A\cap I:A\in\mathscr{P}(S)^\perp\}\subseteq\mathscr{P}(I)^{\perp_I}$  and $\sim_I$ is reflexive then, for all $B,C\in\mathscr{P}(I)^{\perp_I}$, we have $B\sim_IC\Leftrightarrow B^{\perp\perp}\sim C^{\perp\perp}$.
\end{thm}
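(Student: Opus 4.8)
The plan is to pass from $I$ to $S$ through the single identity
\[
X^{\perp_I\perp_I}=X^{\perp\perp}\cap I\qquad(X\subseteq I),
\]
after which both implications fall out by manipulations with \autoref{simlem} entirely analogous to the proofs of \autoref{simtran} and \autoref{simBsim}. First I would establish that identity. The inclusion ``$\subseteq$'' is immediate: $X\subseteq X^{\perp\perp}\cap I$, and $X^{\perp\perp}\cap I$ is $\perp_I$-closed by the standing hypothesis $\{A\cap I:A\in\mathscr{P}(S)^\perp\}\subseteq\mathscr{P}(I)^{\perp_I}$, so applying $^{\perp_I\perp_I}$ gives $X^{\perp_I\perp_I}\subseteq X^{\perp\perp}\cap I$. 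For ``$\supseteq$'', note $X^{\perp_I}=X^\perp\cap I\subseteq X^\perp=X^{\perp\perp\perp}\subseteq(X^{\perp\perp}\cap I)^\perp$, hence $X^{\perp_I}\subseteq(X^{\perp\perp}\cap I)^{\perp_I}$, and applying $^{\perp_I}$ and using $\perp_I$-closedness of $X^{\perp\perp}\cap I$ again reverses this to $X^{\perp_I\perp_I}\supseteq X^{\perp\perp}\cap I$. As corollaries I get $B=B^{\perp\perp}\cap I$ for each $B\in\mathscr{P}(I)^{\perp_I}$, and, whenever $d\in I$ satisfies $\{d\}^{\perp_I\perp_I}=B$, also $B^{\perp\perp}=(\{d\}^{\perp\perp}\cap I)^{\perp\perp}=\{d\}^{\perp\perp}$, the last equality because $d\in\{d\}^{\perp\perp}\cap I\subseteq\{d\}^{\perp\perp}$.

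For the forward implication, if $a\in I$ witnesses $B\sim_I C$ then $\{a\}^{\perp\perp}=B^{\perp\perp}$ and $\{a^*\}^{\perp\perp}=C^{\perp\perp}$ by the above, so $a$ already witnesses $B^{\perp\perp}\sim C^{\perp\perp}$. For the converse, take $s\in S$ witnessing $B^{\perp\perp}\sim C^{\perp\perp}$ and, using reflexivity of $\sim_I$, pick $b\in B$, $c\in C$ with $b,c\in S_+$ and $\{b\}^{\perp_I\perp_I}=B$, $\{c\}^{\perp_I\perp_I}=C$ (take a reflexivity witness $r$ for each and replace it by $r^*r$, which stays in $I$ since $I=I^*$ is a subsemigroup and has the same $\perp_I$-bicommutant by \autoref{s*st}). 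Then $\{b\}^{\perp\perp}=B^{\perp\perp}=\{s\}^{\perp\perp}$ and $\{c\}^{\perp\perp}=C^{\perp\perp}=\{s^*\}^{\perp\perp}$. The transporting element will be $x=csb$, which lies in $I$ because $I$ is a bi-ideal. Using \autoref{simlem} repeatedly, together with \autoref{pq} and the identities $\{s^*s\}^{\perp\perp}=\{s\}^{\perp\perp}$ and $\{ss^*\}^{\perp\perp}=\{s^*\}^{\perp\perp}$ (both from \autoref{s*st}), I would compute $\{cs\}^{\perp\perp}=(\{c\}^{\perp\perp}s)^{\perp\perp}=(\{s^*\}^{\perp\perp}s)^{\perp\perp}=\{s^*s\}^{\perp\perp}=\{b\}^{\perp\perp}$, whence $\{x\}^{\perp\perp}=(\{b\}^{\perp\perp}b)^{\perp\perp}=\{b^2\}^{\perp\perp}=\{b\}^{\perp\perp}=B^{\perp\perp}$; and $\{(sb)^*\}^{\perp\perp}=(\{b\}^{\perp\perp}s^*)^{\perp\perp}=(\{s\}^{\perp\perp}s^*)^{\perp\perp}=\{ss^*\}^{\perp\perp}=\{c\}^{\perp\perp}$, so writing $x=c\cdot(sb)$ and invoking the second half of \autoref{simlem} gives $\{x^*\}^{\perp\perp}=\{c^*\}^{\perp\perp}=\{c\}^{\perp\perp}=C^{\perp\perp}$. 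Finally $\{x\}^{\perp_I\perp_I}=\{x\}^{\perp\perp}\cap I=B^{\perp\perp}\cap I=B$ and likewise $\{x^*\}^{\perp_I\perp_I}=C$ by the first paragraph, so $x$ witnesses $B\sim_I C$.

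The one step that needs care is the identity of the first paragraph: the hypothesis $\{A\cap I:A\in\mathscr{P}(S)^\perp\}\subseteq\mathscr{P}(I)^{\perp_I}$ must be used precisely where it is needed — it is exactly what guarantees $X^{\perp\perp}\cap I$ is $\perp_I$-closed, and without it only ``$\subseteq$'' survives — and one must be scrupulous about which polarity, $^\perp$ or $^{\perp_I}$, is being taken at each point, since $^{\perp_I}$ is just $^\perp$ followed by intersection with $I$ but iterating that operation is not the same as iterating $^\perp$ and intersecting at the end. Everything afterwards is routine bookkeeping with \autoref{simlem}, and the shape of the argument is forced: $x=csb$ is chosen so that one ``end'' of its $\perp\perp$-closure collapses onto $B^{\perp\perp}=\{b\}^{\perp\perp}$ and the other onto $C^{\perp\perp}=\{c\}^{\perp\perp}$.
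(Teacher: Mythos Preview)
Your overall strategy is exactly the paper's: establish $\{a\}^{\perp_I\perp_I}=\{a\}^{\perp\perp}\cap I$ for $a\in I$, use it to push the forward implication through directly, and for the converse pick positive generators $b,c$ via reflexivity and transport with $csb\in I$ using \autoref{simlem}. The paper does precisely this (writing $cab$ for your $csb$).

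There is, however, one genuine slip in your first paragraph. You write ``$X\subseteq X^{\perp\perp}\cap I$'' as if it were immediate, and later ``$d\in\{d\}^{\perp\perp}\cap I$''. But $\perp$ is \emph{not} symmetric on $S$, so $X\subseteq X^{\perp\perp}$ can fail: in $M_2(\mathbb{C})$ with $a=\begin{pmatrix}0&1\\0&0\end{pmatrix}$ one has $\{a\}^{\perp\perp}=\begin{pmatrix}0&0\\0&*\end{pmatrix}$, which does not contain $a$. So the justification you give for the ``$\subseteq$'' half of the key identity is wrong as stated, and the same error recurs when you argue $(\{d\}^{\perp\perp}\cap I)^{\perp\perp}=\{d\}^{\perp\perp}$ from $d\in\{d\}^{\perp\perp}\cap I$.

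The identity you want is nonetheless true, and the repair is exactly what the paper does: use $a^*a$ in place of $a$. One always has $a^*a\in\{a\}^{\perp\perp}\cap I$ (here $a^*a\in I$ since $I=I^*$ is a subsemigroup, and $a^*a\in\{a\}^{\perp\perp}$ because $\{a\}^{\perp\perp}$ is self-adjoint and quasi-rooted, or simply because $sa^*=0$ for $s\in\{a\}^\perp$), and $\{a\}^{\perp_I}=\{a^*a\}^{\perp_I}$, so $\{a\}^{\perp_I\perp_I}=\{a^*a\}^{\perp_I\perp_I}\subseteq(\{a\}^{\perp\perp}\cap I)^{\perp_I\perp_I}=\{a\}^{\perp\perp}\cap I$. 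Likewise $(\{a\}^{\perp\perp}\cap I)^{\perp\perp}=\{a\}^{\perp\perp}$ follows from $a^*a\in\{a\}^{\perp\perp}\cap I$ together with $\{a^*a\}^{\perp\perp}=\{a\}^{\perp\perp}$. With that correction in place, everything else you wrote goes through and matches the paper's proof. (Incidentally, your ``$\supseteq$'' direction is correct but overcomplicated: $\{a\}^{\perp\perp}\cap I\subseteq\{a\}^{\perp_I\perp_I}$ is immediate from $\{a\}^{\perp_I}\subseteq\{a\}^\perp$ and needs no hypothesis.)
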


\begin{proof}
For any $a\in I$, $\{a\}^{\perp\perp}\cap I\subseteq\{a\}^{\perp_I\perp_I}$.  By assumption $\{a\}^{\perp\perp}\cap I$ is a member of $\mathscr{P}(I)^{\perp_I}$, necessarily containing $a^*a$, and hence $\{a\}^{\perp_I\perp_I}\subseteq\{a\}^{\perp\perp}\cap I$ so
\begin{equation}\label{app}
\{a\}^{\perp\perp}\subseteq\{a\}^{\perp_I\perp_I\perp\perp}=(\{a\}^{\perp\perp}\cap I)^{\perp\perp}\subseteq\{a\}^{\perp\perp\perp\perp}=\{a\}^{\perp\perp}.
\end{equation}
Thus if $a$ witnesses $B\sim_IC$ then $a$ also witnesses $B^{\perp\perp}\sim C^{\perp\perp}$.

Conversely, say $\{a\}^{\perp\perp}=B^{\perp\perp}$ and $\{a^*\}^{\perp\perp}=C^{\perp\perp}$.  As $\sim_I$ is reflexive, we have $b\in B_+$ and $c\in C_+$ with $B=\{b\}^{\perp_I\perp_I}$ and $C=\{c\}^{\perp_I\perp_I}$.  By \eqref{app}, we have $\{b\}^{\perp\perp}=B^{\perp\perp}=\{a\}^{\perp\perp}$, so \autoref{simlem} yields $\{ab\}^{\perp\perp}=B^{\perp\perp}$ and $\{ba^*\}^{\perp\perp}=C^{\perp\perp}$.  Likewise $\{cab\}^{\perp\perp}=B^{\perp\perp}$ and $\{ba^*c\}^{\perp\perp}=C^{\perp\perp}$.  As $I$ is a bi-ideal, $cab\in I$ and $B\sim_IC$ because $\{cab\}^{\perp_I\perp_I}=B^{\perp\perp}\cap I=B$ and $\{ba^*c\}^{\perp_I\perp_I}=C^{\perp\perp}\cap I=C$.
\end{proof}

\section{*-Rings}\label{*R}

Many theorems of the previous section relied on properties of $\sim$ like reflexivity and $\perp$-additivity which do not hold in an arbitrary *-semigroup.  In this section we show that they do at least hold in some fairly general classes of *-rings.

\begin{dfn}
A \emph{*-ring} is simultaneously a ring and a *-semigroup with respect to both $+$ and $\cdot$.  A *-ring is \emph{proper} if it is proper with respect to $\cdot$.
\end{dfn}

\begin{prp}\label{add}
If $S$ is a proper *-ring then
\begin{enumerate}
\item $\sim$ is $\perp$-additive,
\item $S$ is $\perp$-cancellative, and
\item $r\mapsto\{r\}^{\perp\perp}$ is injective on $S_\mathrm{proj}$.
\end{enumerate}
\end{prp}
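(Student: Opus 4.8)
The plan is to verify the three assertions in turn, exploiting heavily that in a ring the additive structure interacts with $\perp$ via \autoref{Lbotprp}\eqref{Lbotprp6}, which already tells us that $\{r\}^\perp$ is closed under addition for every $r\in S$.

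\emph{Item (1): $\sim$ is $\perp$-additive.} Suppose $A\perp A'$, $B\perp B'$, and we have witnesses $a$ for $A\sim A'$ and $b$ for $B\sim B'$, so $\{a\}^{\perp\perp}=A$, $\{a^*\}^{\perp\perp}=A'$, $\{b\}^{\perp\perp}=B$, $\{b^*\}^{\perp\perp}=B'$. The natural candidate witness for $(A\vee B)\sim(A'\vee B')$ is $a+b$. First I would use $A\perp B$ and $A'\perp B'$ to compute the one-sided annihilators: since $ab^*$, $a^*b$ and so on vanish (using $\{a\}^{\perp\perp}\perp\{b\}^{\perp\perp}$ forces $ab^*=0=ba^*$, etc., via \autoref{s*st}), one gets $(a+b)(a+b)^* = aa^*+bb^*$ and $(a+b)^*(a+b)=a^*a+b^*b$. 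Then I would show $\{a+b\}^\perp = \{a\}^\perp\cap\{b\}^\perp$: the inclusion $\supseteq$ is clear from additivity of annihilators, and for $\subseteq$, if $(a+b)s^*=0$ then, multiplying on the left by suitable positive elements of $\{a\}^{\perp\perp}$ resp.\ $\{b\}^{\perp\perp}$ and using that these two *-annihilators are orthogonal, one separates the equation into $as^*=0$ and $bs^*=0$. Dualizing the same argument to $(a+b)^*$, and using $\{a\}^\perp\cap\{b\}^\perp = (\{a\}^{\perp\perp}\vee\{b\}^{\perp\perp})^\perp = (A\vee B)^\perp$, gives $\{a+b\}^{\perp\perp}=A\vee B$ and $\{(a+b)^*\}^{\perp\perp}=A'\vee B'$, as required.

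\emph{Item (2): $\perp$-cancellativity.} Assume $\{a\}^\perp=\{b\}^\perp$ and $as=at$. Then $a(s-t)=0$, i.e.\ $s-t\in\{a^*\}^{\mathrm L}$; but I want to conclude $b(s-t)=0$. The point is to relate the left annihilator $\{a^*\}^{\mathrm L}$ to the *-annihilator: since $\{a\}^\perp=\{b\}^\perp$, we have $\{a^*a\}^\perp=\{a\}^\perp=\{b\}^\perp=\{b^*b\}^\perp$ by \autoref{Lbotprp}, and then $\{a^*a\}^{\mathrm L}=\sqrt{\{a^*a\}^\perp}=\sqrt{\{b^*b\}^\perp}=\{b^*b\}^{\mathrm L}$, so it suffices to reduce to the case $a,b\in S_+$. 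With $a=a^*a$-type positive elements having the same *-annihilator, $a(s-t)=0$ gives, via \autoref{s*st} applied in reverse, the relevant vanishing; the ring structure enters because $as=at\Leftrightarrow a(s-t)=0$ requires subtraction. I expect this item to go through once the reduction to positive elements and the identity $\{r\}^{\mathrm L}=\{r^*r\}^{\mathrm L}$ are in place.

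\emph{Item (3): injectivity of $r\mapsto\{r\}^{\perp\perp}$ on $S_\mathrm{proj}$.} Let $p,q\in S_\mathrm{proj}$ with $\{p\}^{\perp\perp}=\{q\}^{\perp\perp}$. For a projection $p$ one has $p\in\{p\}^{\perp\perp}$ and also $1-p$ (if a unit is available) or more robustly $\{p\}^\perp=\{p\}^{\mathrm L}=(1-p)$-annihilator behaviour; the clean ring-theoretic fact is that for a projection $p$, $s\in\{p\}^\perp \Leftrightarrow ps=0 \Leftrightarrow s=s-ps$ essentially says $p$ acts as a ``unit'' on $\{p\}^{\perp\perp}$. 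Concretely: since $q\in\{q\}^{\perp\perp}=\{p\}^{\perp\perp}$ and $p$ is a projection, I claim $pq=q$ — indeed $q-pq$ lies in $\{p\}^\perp$ (because $p(q-pq)=pq-p^2q=pq-pq=0$, using $p^2=p$) but also $q-pq\in\{q\}^{\perp\perp}\cdot$-coset, and orthogonality $\{p\}^\perp\cap\{p\}^{\perp\perp}=\{0\}$ forces $q-pq=0$; here subtraction is exactly the ring operation being used. Symmetrically $qp=p$. Then $p=qp=(pq)^*{}^*\ldots$ — more directly, $p = qp$ and $q = pq = (qp)^* {}$; taking adjoints, $p=p^*=(qp)^*=p^*q^*=pq=q$. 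So $p=q$.

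\emph{Main obstacle.} The delicate point throughout is item (1): making rigorous the ``separation'' step $\{a+b\}^\perp=\{a\}^\perp\cap\{b\}^\perp$ under the orthogonality hypothesis $A\perp B$. One must be careful that $\perp$ is not symmetric as a relation on elements, so the argument has to be run on both $a+b$ and $(a+b)^*$, and one needs the orthogonality of the *-annihilators $\{a\}^{\perp\perp}$ and $\{b\}^{\perp\perp}$ (not merely of $a$ and $b$) to kill all the cross terms; this is where \autoref{s*st} and \autoref{pq} do the real work. Items (2) and (3) are comparatively routine once one systematically replaces general elements by their positive parts $r^*r$ and uses $\{r\}^{\mathrm L}=\{r^*r\}^{\mathrm L}$ together with $\{r\}^\perp\cap\{r\}^{\perp\perp}=\{0\}$.
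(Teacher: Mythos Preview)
Your approach to items (1) and (2) is essentially the paper's: the witness $a+b$ and the separation argument via multiplying by $a^*$ (resp.\ $b^*$) is exactly what the paper does, and for (2) the paper's one-line version is to note $(s-t)(s-t)^*\in\{a\}^\perp\subseteq\{b\}^\perp$ and apply properness, which is a slightly slicker packaging of your reduction through $\{a^*a\}^{\mathrm L}=\{b^*b\}^{\mathrm L}$. One cosmetic slip: in (1) you wrote the hypotheses as $A\perp A'$ and $B\perp B'$, but the definition of $\perp$-additivity requires $A\perp B$ and $A'\perp B'$; your actual argument uses the latter, so this is just mislabeling.

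Item (3) is where you diverge from the paper and where there is a genuine gap. You claim $q-pq\in\{p\}^\perp$ from $p(q-pq)=0$ alone, but membership in $\{p\}^\perp$ requires \emph{both} $p(q-pq)=0$ and $p(q-pq)^*=0$; the second equation is $pq=pqp$, which you have not established. The gap is repairable along your lines: from $p(q-pq)=0$ one gets $(q-pq)(q-pq)^*\in\{p\}^\perp$ via $\{p\}^{\mathrm L}=\sqrt{\{p\}^\perp}$, and since $\{p\}^{\perp\perp}$ is a subring containing $p$ and $q$ one also has $(q-pq)(q-pq)^*\in\{p\}^{\perp\perp}$, forcing it to vanish. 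The paper instead bootstraps elegantly from (2): since $p\cdot pq=p\cdot q$ and $\{p\}^\perp=\{q\}^\perp$, $\perp$-cancellativity gives $qpq=q$ directly, whence $q(p-q)(p-q)q=0$ and properness finishes. Using (2) inside (3) avoids the subring/left-annihilator bookkeeping entirely.
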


\begin{proof}\
\begin{enumerate}
\item For any $a,b\in S$, we have $\{a\}^\perp\cap\{b\}^\perp\subseteq\{a+b\}^\perp$, by the distributivity of $\cdot$ over $+$.  Furthermore, if $a^*b=0$ and $(a+b)c=0$ then $0=a^*(a+b)c=a^*ac$ and hence $ac=0$, by properness.  Likewise if $b^*a=0$ and $(a+b)c=0$ then $bc=0$.  As $c$ was arbitrary, $\{a+b\}^\perp\subseteq\{a\}^\perp\cap\{b\}^\perp$ and hence $\{a+b\}^{\perp\perp}=\{a\}^{\perp\perp}\vee\{b\}^{\perp\perp}$.  Thus if $A,B,C,D\in\mathscr{P}(S)^\perp$, $A\perp B$, $C\perp D$, $a$ witnesses $A\sim C$ and $b$ witnesses $B\sim D$ then $a+b$ witnesses $A\vee B\sim C\vee D$.
\item If $\{a\}^\perp\subseteq\{b\}^\perp$ and $as=at$, $a(s-t)=0$ so $(s-t)(s-t)^*\in\{a\}^\perp\subseteq\{b\}^\perp$.  By \autoref{s*st}, $b(s-t)=0$ and hence $bs=bt$.
\item Say $\{p\}^\perp=\{q\}^\perp$, for $p,q\in S_\mathrm{proj}$.  As $ppq=pq$ and we have just shown $S$ is $\perp$-cancellative, $qpq=qq=q$ so $q(p-q)(p-q)q=qpq-qpq-qpq+q=0$.  By properness, $qp=qq=q$.  Likewise $p=pq=(qp)^*=q^*=q$.
\end{enumerate}
\end{proof}

\begin{dfn}
We call $S$ \emph{$\perp$-seperable} if every $\perp$-subset is countable.
\end{dfn}

\begin{dfn}
A \emph{*-algebra} $S$ is simultaneously a *-ring and a real algebra such that $(\lambda a)^*=\lambda a^*$, for all $a\in S$ and $\lambda\in\mathbb{R}$.  A \emph{normed *-algebra} $S$ is a *-algebra together with an algebra norm $||\cdot||$, specifically a function from $S$ to $\mathbb{R}$ such that $||a+b||\leq||a||+||b||$, $||ab||\leq||a||||b||$ and $||\lambda a||\leq|\lambda|||a||$, for all $a,b\in S$ and $\lambda\in\mathbb{R}$.
\end{dfn}

\begin{prp}
Every proper norm separable normed *-algebra $S$ is $\perp$-separable.
\end{prp}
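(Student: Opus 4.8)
The plan is to show directly that every $\perp$-subset $\mathcal{A}\subseteq S$ (a subset whose distinct elements are pairwise $\perp$-orthogonal) is countable. The idea is that positivity lets us replace each $a\in\mathcal{A}$ by a \emph{unit} positive element in a way that turns orthogonality into a lower bound on distances, after which a separable metric space simply has no uncountable uniformly separated subset. (If one instead reads ``$\perp$-subset'' as a pairwise orthogonal family in $\mathscr{P}(S)^\perp$, this reduces to the case treated here: choosing a nonzero $s_A\in A$ from each member $A$ of the family gives a $\perp$-subset of $S$, since distinct orthogonal $*$-annihilators intersect trivially, infima in $\mathscr{P}(S)^\perp$ being intersections by \autoref{Lperpequiv}.) Throughout we may assume $0\notin\mathcal{A}$, since deleting $0$ changes neither the hypothesis nor countability, and we use that $\|\cdot\|$ is a genuine (faithful) norm, so $x\neq0$ implies $\|x\|>0$.

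First I would carry out the construction. Fix $a\in\mathcal{A}$. Since $a\neq0$, properness gives $a^*a\neq0$, and then, since $(a^*a)^2=(a^*a)^*(a^*a)$, properness again gives $(a^*a)^2\neq0$; hence $\|a^*a\|>0$ and $\|(a^*a)^2\|>0$. Put $p_a:=(a^*a)/\|a^*a\|$, so $\|p_a\|=1$, and $c_a:=\|p_a^{2}\|=\|(a^*a)^2\|/\|a^*a\|^{2}>0$. Now take distinct $a,b\in\mathcal{A}$. Orthogonality gives $ab^*=0$, hence (taking adjoints) $ba^*=0$, hence $(a^*a)(b^*b)=a^*(ab^*)b=0$ and likewise $(b^*b)(a^*a)=0$, so $p_ap_b=p_bp_a=0$. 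Therefore
\[
\|p_a-p_b\|\ \geq\ \|p_a(p_a-p_b)\|\ =\ \|p_a^{2}-p_ap_b\|\ =\ \|p_a^{2}\|\ =\ c_a ,
\]
and symmetrically $\|p_a-p_b\|\geq c_b$; in particular $p_a\neq p_b$, so $a\mapsto p_a$ is injective on $\mathcal{A}$.

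Next I would finish using separability. Set $\mathcal{A}_n:=\{a\in\mathcal{A}:c_a>1/n\}$ for $n\geq1$; since each $c_a>0$ these cover $\mathcal{A}$, so it suffices to show every $\mathcal{A}_n$ is countable. By the displayed estimate, distinct $a,b\in\mathcal{A}_n$ satisfy $\|p_a-p_b\|>1/n$, i.e.\ $\{p_a:a\in\mathcal{A}_n\}$ is a $1/n$-separated subset of $S$. Picking a countable dense $D\subseteq S$ and, for each $a\in\mathcal{A}_n$, a point $d_a\in D$ with $\|p_a-d_a\|<1/(2n)$, the triangle inequality forces $a\mapsto d_a$ to be injective on $\mathcal{A}_n$; hence $\mathcal{A}_n$ is countable, and so is $\mathcal{A}=\bigcup_n\mathcal{A}_n$.

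I expect the one genuinely substantive point to be the non-vanishing $(a^*a)^2\neq0$: one cannot run the argument with $a$ (or $a^2$) in place of $a^*a$, because a proper $*$-algebra may contain nonzero nilpotents, so $\|a^2\|$ need not be bounded away from $0$; it is precisely the two applications of properness ($a\neq0\Rightarrow a^*a\neq0\Rightarrow(a^*a)^2\neq0$) that make $c_a$ strictly positive and keep the estimate alive. Everything else — the orthogonality computation, the normalisation, and the countability of uniformly separated subsets of a separable metric space — is routine.
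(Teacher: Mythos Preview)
Your proof is correct and follows essentially the same route as the paper: turn pairwise $\perp$-orthogonality into a lower bound on norm distances via submultiplicativity, then invoke separability of the metric space. The only cosmetic difference is that the paper rescales each $a$ by a large scalar so that $\|a^*a\|/\|a^*\|\geq1$, obtaining a uniformly $1$-separated set in one stroke, whereas you normalise to $p_a=a^*a/\|a^*a\|$ and then stratify by the value of $c_a=\|p_a^2\|$; both manoeuvres rest on the same two applications of properness ($a\neq0\Rightarrow a^*a\neq0\Rightarrow(a^*a)^2\neq0$) that you highlighted.
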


\begin{proof}
Given $a,b\in S$, we have $||a^*a||=||a^*(a-b)||\leq||a^*||||a-b||$.  Noting that $||(\lambda a)^*(\lambda a)||=\lambda^2||a^*a||$ while $||\lambda a^*||=|\lambda|||a^*||$, we see that by replacing $a$ by $\lambda a$, for sufficiently large $\lambda$, we can ensure that $||a^*a||/||a||\geq1$.  Given any $\perp$-subset $A$ of $S$, this means we can replace elements of $A$ by scalar multiples to obtain a subset $A$ of the same cardinality with $||a-b||\geq1$, for all distinct $a,b\in A$.  Thus if $S$ were not $\perp$-separable then it could not be norm separable.
\end{proof}

Recall that a \emph{Banach} *-algebra is a complete normed *-algebra.

\begin{prp}
If $S$ is a proper $\perp$-seperable Banach *-algebra, $\sim$ is $\perp$-complete.
\end{prp}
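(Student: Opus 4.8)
The plan is to reduce to a countable family, pick witnesses that can be summed, and show that the sum of the witnesses witnesses the required equivalence.

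First I would observe that a $\perp$-subset $\mathcal{A}$ of $\mathscr{P}(S)^\perp$ must be countable: after discarding $\{0\}$ if it occurs, choose $0\neq a_A\in A$ for each $A\in\mathcal{A}$; distinct members of $\mathcal{A}$ are orthogonal, hence meet in $\{0\}$, so the $a_A$ are distinct and form a $\perp$-subset of $S$ of the same cardinality, which is countable by $\perp$-separability. Thus, given $\perp$-subsets $\mathcal{A},\mathcal{B}$ of $\mathscr{P}(S)^\perp$ and a bijection $f$ with $A\sim f(A)$, I may write $\mathcal{A}=\{A_n\}$ and $B_n=f(A_n)$, so the $A_n$ are pairwise $\perp$ and $A_n\sim B_n$. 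Pick $s_n\in S$ with $\{s_n\}^{\perp\perp}=A_n$ and $\{s_n^*\}^{\perp\perp}=B_n$. Since $\{\lambda s_n\}^\perp=\{s_n\}^\perp$ for every real $\lambda\neq0$ ($S$ being a real algebra), I may rescale each $s_n$ by a positive real so that $\|s_n\|\leq 2^{-n}$; then $s:=\sum_n s_n$ converges in the Banach space $S$, and $\{s\}^{\perp\perp}$, $\{s^*\}^{\perp\perp}$ will be the claimed values of $\bigvee\mathcal{A}$ and $\bigvee\mathcal{B}$.

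The heart of the proof is to establish
\[
\{s\}^\perp=\bigcap_n\{s_n\}^\perp\qquad\text{and}\qquad\{s^*\}^\perp=\bigcap_n\{s_n^*\}^\perp,
\]
for then $\{s\}^{\perp\perp}=\bigl(\bigcap_n\{s_n\}^\perp\bigr)^\perp=\bigvee_nA_n=\bigvee\mathcal{A}$ and likewise $\{s^*\}^{\perp\perp}=\bigvee\mathcal{B}$, so $s$ witnesses $\bigvee\mathcal{A}\sim\bigvee\mathcal{B}$. The inclusions $\supseteq$ are immediate once one notes that one-sided multiplications are bounded, hence continuous, so they commute with the convergent series $\sum_n s_n$. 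For the reverse inclusions the essential point is that the $A_m$ are pairwise orthogonal self-adjoint $*$-annihilators, so $s_ms_n^*=s_m^*s_n=s_ms_n=s_m^*s_n^*=0$ for $m\neq n$; multiplying $s=\sum_n s_n$ by $s_n^*$ on the appropriate side and using continuity isolates the $n$th term, e.g.\ $s_n^*s=s_n^*s_n$. Feeding the conditions defining $\{s\}^\perp$ (namely $st=st^*=0$) into this identity yields $s_n^*s_nt=0$, whence $s_nt=0$ by \autoref{s*st}, and similarly $s_nt^*=0$, placing $t$ in $\{s_n\}^\perp$. For $\{s^*\}^\perp$ the same idea applies once its defining conditions are rewritten so as to mention $s$ rather than $s^*$ (using $s^*r^*=(rs)^*$ and $s^*r=(r^*s)^*$), after which one multiplies on the right by $s_n^*$ and invokes properness in the form $aa^*=0\Rightarrow a=0$ to pass from $rs_ns_n^*=0$ to $rs_n=0$.

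The main obstacle is precisely that in a general normed $*$-algebra the involution need not be continuous, so one cannot simply declare $s^*=\sum_n s_n^*$ and recycle the first computation; the identity $\{s^*\}^\perp=\bigcap_n\{s_n^*\}^\perp$ must be derived on its own as above, working throughout with expressions referring to $s$. A couple of minor points remain: the argument covers a finite family $\mathcal{A}$ verbatim (alternatively one iterates \autoref{add}), and degenerate cases with $A_n=\{0\}$ are harmless, taking $s_n=0$ there.
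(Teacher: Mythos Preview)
Your proof is correct and follows essentially the same approach as the paper: rescale the witnesses so that their series converges, and show the sum witnesses the equivalence of the suprema by multiplying by $s_n^*$ (on the appropriate side) and using continuity of one-sided multiplication together with properness, exactly as in the finite case of \autoref{add}. Your treatment is in fact more careful than the paper's terse proof in one respect: you explicitly note that the involution need not be continuous under the paper's definition of a normed $*$-algebra, and hence handle $\{s^*\}^\perp$ by rewriting its defining conditions in terms of $s$ and multiplying on the right by $s_n^*$, rather than assuming $s^*=\sum_n s_n^*$.
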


\begin{proof}
Say $(a_n)\subseteq S$ and $a_ma_n^*=0=a_m^*a_n$ for all distinct $m,n\in\mathbb{N}$.  By replacing each $a_n$ with $\lambda_na_n$, for sufficiently small $\lambda_n>0$, we may assume $\sum||a_n||<\infty$ and hence $a=\sum a_n$ exists.  Multiplication is continuous so, as in the proof of \autoref{add}, $\{a\}^{\perp\perp}=\bigvee\{a_n\}^{\perp\perp}$ and hence $\sim$ is countably $\perp$-additive.  As every $\perp$-subset of $S$ and hence of $\mathscr{P}(S)^\perp$ is countable, $\sim$ is $\perp$-complete.
\end{proof}

Recall that an orthoposet $\mathbb{P}$ is \emph{orthomodular} if $p=q\vee(p\wedge q^\perp)$ whenever $q\leq p$.

\begin{prp}\label{omodref}
If $S$ is a proper $\perp$-seperable Banach *-algebra and $\mathscr{P}(S)^\perp$ is orthomodular then $\sim$ is reflexive.
\end{prp}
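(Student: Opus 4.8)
The plan is to apply \autoref{simtran}, which reduces the claim to verifying \eqref{simtraneq}, namely that every $A\in\mathscr{P}(S)^\perp$ has the form $\{s\}^\perp$ for some $s\in S$. Since $\mathscr{P}(S)^\perp$ is closed under $^\perp$, it is enough to show that every $A\in\mathscr{P}(S)^\perp$ has the form $\{s\}^{\perp\perp}$; applying this to $A^\perp$ then gives $A=A^{\perp\perp}=\{s\}^\perp$. Given $A$, I would construct such an $s$ as an absolutely convergent series of pairwise $\perp$ positive elements generating $A$, using $\perp$-separability to keep the series countable, completeness to make it converge, and orthomodularity to force its $^{\perp\perp}$ to exhaust $A$.

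Assume $A\neq\{0\}$ (otherwise take $s=0$) and fix, by Zorn's lemma, a maximal $\perp$-subset $\{a_n:n\in\mathbb{N}\}$ of $A_+\setminus\{0\}$, which is countable since $S$ is $\perp$-separable. Rescaling each $a_n$ by a small positive real — which preserves positivity, membership in $A$, and every orthogonality relation — I may assume $\sum_n\|a_n\|<\infty$, so that $s:=\sum_n a_n$ converges in the Banach *-algebra $S$. The $a_n$ are self-adjoint and pairwise $\perp$, hence $a_ma_n=0$ for $m\neq n$, and continuity of multiplication gives $a_ms=a_m^2$; so for $u\in\{s\}^\perp$ one gets $a_m^2u^*=0=a_m^2u$, and thus $a_mu^*=0=a_mu$ by \autoref{pq}. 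The reverse inclusion is immediate: if $a_nu^*=0=a_nu$ for all $n$ then $su^*=\sum_n a_nu^*=0=su$ by continuity. Therefore $\{s\}^\perp=\bigcap_n\{a_n\}^\perp$, and so $\{s\}^{\perp\perp}=\bigl(\bigcap_n\{a_n\}^\perp\bigr)^\perp=\bigvee_n\{a_n\}^{\perp\perp}$ in the complete ortholattice $\mathscr{P}(S)^\perp$.

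It remains to prove $\bigvee_n\{a_n\}^{\perp\perp}=A$, and I expect this step — the one place where orthomodularity is genuinely used — to be the crux. The inclusion $\subseteq$ is immediate since each $a_n\in A=A^{\perp\perp}$. For the reverse, suppose $B:=\bigvee_n\{a_n\}^{\perp\perp}$ were a proper subset of $A$. Then orthomodularity gives $A=B\vee(A\wedge B^\perp)$, and since $\wedge$ is intersection (\autoref{Lperpequiv}) we get $A\cap B^\perp\neq\{0\}$. Pick $0\neq c\in A\cap B^\perp$ and pass to $c^*c$, which is nonzero by properness and lies in the self-adjoint subsemigroup $A\cap B^\perp$, so $c^*c\in A_+\setminus\{0\}$. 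Since $a_n\in\{a_n\}^{\perp\perp}\subseteq B$ and $c^*c\in B^\perp$, and $\perp$ is symmetric between self-adjoint elements, $c^*c$ is orthogonal to every $a_n$; moreover $c^*c$ differs from each $a_n$, since $a_n\perp a_n$ would force $a_n=0$ by properness. Hence $\{a_n:n\in\mathbb{N}\}\cup\{c^*c\}$ is a strictly larger $\perp$-subset of $A_+\setminus\{0\}$, contradicting maximality. So $B=A$, giving $\{s\}^{\perp\perp}=A$, and the reduction at the outset completes the proof.
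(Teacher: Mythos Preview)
Your proof is correct and follows essentially the same strategy as the paper: take a maximal $\perp$-subset of $A_+$, use $\perp$-separability to get countability, sum a rescaled sequence to produce $s$ with $\{s\}^{\perp\perp}=\bigvee_n\{a_n\}^{\perp\perp}$, and invoke orthomodularity plus maximality to force this supremum to equal $A$. The paper's version is terser, citing ``as above'' for the series step (i.e.\ the argument in the preceding proposition), whereas you spell out the computation $a_ms=a_m^2$ and the passage through \autoref{pq} explicitly; but the underlying ideas are identical.
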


\begin{proof}
Given $A\in\mathscr{P}(S)^\perp$, let $B$ be a maximal $\perp$-subset of $A_+$ so $B^{\perp\perp}\subseteq A^{\perp\perp}=A$.  If $B^{\perp\perp}\neq A$ then $A\cap B^\perp\neq\{0\}$, by orthomodularity, so we have $a\in A_+\cap B^\perp$, contradicting the maximality of $B$.  As $S$ is $\perp$-separable, $B$ is countable and, as above, we have $a\in S$ with $\{a\}^{\perp\perp}=B^{\perp\perp}=A$.
\end{proof}

There are norm separable C*-algebras $S$ for which $\mathscr{P}(S)^\perp$ is not orthomodular (see \cite{Bice2014} Example 3.87).  In this case the following result can be applied instead.

Note we are letting $S_{++}$ denote the additive semigroup generated by $S_+$.

\begin{prp}\label{lastprop}
If $S$ is a proper norm separable Banach *-algebra and, for all $a\in S_{++}$, we have $\lambda_a$ with $||a||\leq\lambda_a||a+b||$, for all $b\in S_{++}$, then $\sim$ is reflexive.
\end{prp}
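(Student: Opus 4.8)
The plan is to reduce, via \autoref{simtran}, to showing that every $A\in\mathscr{P}(S)^\perp$ has the form $\{s\}^{\perp\perp}$, and then to produce such an $s$ as a rapidly norm-convergent sum of positive elements lying densely in $A$.

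First I would note that $A=A_+^{\perp\perp}$: since $A_+\subseteq A$ we have $A^\perp\subseteq A_+^\perp$, and conversely any $s\in A_+^\perp$ annihilates $a^*a$ for every $a\in A$, hence annihilates $a$ itself by \autoref{s*st}, so $A^\perp=A_+^\perp$ and $A=A^{\perp\perp}=A_+^{\perp\perp}$. As $S$ is norm separable, $A_+$ has a countable dense subset $\{a_n:n\in\mathbb{N}\}$ (nonempty since $0\in A_+$), and continuity of multiplication together with density shows $\{a_n:n\in\mathbb{N}\}^\perp=A_+^\perp$ and hence $\{a_n:n\in\mathbb{N}\}^{\perp\perp}=A$. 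Picking $\lambda_n>0$ with $\sum_n\lambda_n\|a_n\|<\infty$, I set $s=\sum_n\lambda_na_n\in S$ and claim $\{s\}^\perp=\{a_n:n\in\mathbb{N}\}^\perp$, which finishes the argument. The inclusion $\{a_n:n\in\mathbb{N}\}^\perp\subseteq\{s\}^\perp$ is immediate from continuity, since if $a_nt^*=0=a_nt$ for all $n$ then $st^*=\sum_n\lambda_na_nt^*=0$ and likewise $st=0$.

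The reverse inclusion is the crux, and is where the hypothesis enters. Suppose $t\in\{s\}^\perp$, so $st=0=st^*$; then $t^*st=0$, and since $x\mapsto t^*xt$ is a bounded linear map it commutes with the sum, giving $\sum_n\lambda_n\,t^*a_nt=t^*st=0$. Writing each $a_n=r_n^*r_n$, the $n$-th term equals $(\sqrt{\lambda_n}\,r_nt)^*(\sqrt{\lambda_n}\,r_nt)\in S_+\subseteq S_{++}$. Now fix $m$, put $c=\lambda_m t^*a_mt\in S_{++}$, and apply the hypothesis with $b$ ranging over the partial sums $\sum_{n\leq N,\ n\neq m}\lambda_n t^*a_nt\in S_{++}$ (for $N\geq m$): this yields $\|c\|\leq\lambda_c\,\|\sum_{n\leq N}\lambda_n t^*a_nt\|\to\lambda_c\|t^*st\|=0$, so $c=0$, i.e. $t^*a_mt=0$, i.e. $(r_mt)^*(r_mt)=0$; properness then gives $r_mt=0$ and hence $a_mt=r_m^*r_mt=0$. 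Running the symmetric argument on $st^*=0$ (multiplying on the left by $t$, so the terms become $ta_nt^*=(r_nt^*)^*(r_nt^*)\in S_+$) gives $a_mt^*=0$. As $m$ was arbitrary, $t$ annihilates every $a_n$, so $t\in\{a_n:n\in\mathbb{N}\}^\perp$, as required. The hypothesis is precisely the stand-in for the C*-algebraic inequality $\|a\|\leq\|a+b\|$ for positive $a,b$ that makes this termwise-vanishing argument go through; the remaining ingredients (the reduction to $A_+$, the continuity estimates, and the passage from $(r_mt)^*(r_mt)=0$ to $a_mt=0$) are routine.
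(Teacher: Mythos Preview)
Your proof is correct and follows essentially the same route as the paper's: pick a countable dense subset of $A_+$, form a summable series $s$, and use the $\lambda$-normality hypothesis on the partial sums to force each term $t^*a_mt$ (respectively $ta_mt^*$) to vanish. The only cosmetic difference is that the paper restricts attention to positive $t\in\{s\}^\perp$ and conjugates as $t(\cdot)t$, whereas you handle arbitrary $t$ via $t^*(\cdot)t$ and $t(\cdot)t^*$; both produce elements of $S_+\subseteq S_{++}$, so the hypothesis applies either way.
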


\begin{proof}
Given $A\in\mathscr{P}(S)^\perp$, let $B$ be a countable dense subset of $A_+$.  The continuity of multiplication immediately yields $B^{\perp\perp}=A_+^{\perp\perp}=A^{\perp\perp}$.  By replacing elements of $B$ with non-zero scalar multiplies if necessary, we may assume we have an enumeration $(b_n)$ of $B$ and $\sum||b_n||<\infty$ and hence $a=\lim a_n$ exists, where $a_n=\sum_{m\leq n}b_m$ for all $n\in\mathbb{N}$.  For any $s\in\{a\}^\perp_+$ and $m,n\in\mathbb{N}$ with $m\leq n$, we have $||sb_ms||\leq\lambda_{sb_ms}||sa_ns||\rightarrow0$, as $n\rightarrow\infty$, and hence $b_ms=0$.  Thus $\{a\}^\perp\subseteq B^\perp$ and also $B^\perp\subseteq\{a\}^\perp$, by the continuity of multiplication, so $\{a\}^{\perp\perp}=B^{\perp\perp}=A$.
\end{proof}

When $S$ is a C*-algebra, $S_{++}=S_+$ and we can take $\lambda_a=1$, for all $a\in S_+$, to see that the above result applies.  More generally, \autoref{lastprop} applies whenever $(S,S_{++})$ is $\lambda$-normal (see \cite{AsimowEllis1980} Chapter 2 \S1).

\newpage

\bibliography{maths}{}

\begin{thebibliography}{Mac64}

\bibitem[AE80]{AsimowEllis1980}
L.~Asimow and A.~J. Ellis.
\newblock {\em Convexity theory and its applications in functional analysis},
  volume~16 of {\em London Mathematical Society Monographs}.
\newblock Academic Press Inc. [Harcourt Brace Jovanovich Publishers], London,
  1980.

\bibitem[AG93]{AraPere1993}
Pere Ara and Dmitry Goldstein.
\newblock A solution of the matrix problem for {R}ickart {C}*-algebras.
\newblock {\em Math. Nachr.}, 164:259--270, 1993.
\newblock \href {http://dx.doi.org/10.1002/mana.19931640118}
  {\path{doi:10.1002/mana.19931640118}}.

\bibitem[Ber72]{Berberian1972}
Sterling~K. Berberian.
\newblock {\em Baer *-rings}.
\newblock Springer-Verlag, New York, 1972.
\newblock Die Grundlehren der mathematischen Wissenschaften, Band 195.

\bibitem[Ber85]{Beran1985}
Ladislav Beran.
\newblock {\em Orthomodular lattices}.
\newblock Mathematics and its Applications (East European Series). D. Reidel
  Publishing Co., Dordrecht, 1985.
\newblock Algebraic approach.

\bibitem[Bic13]{Bice2014}
Tristan Bice.
\newblock Annihilators and type decomposition in {C}*-algebras.
\newblock 2013.
\newblock \href {http://arxiv.org/abs/1310.4639} {\path{arXiv:1310.4639}}.

\bibitem[Bic14]{Bice2014b}
Tristan Bice.
\newblock Type decomposition in posets.
\newblock 2014.
\newblock \href {http://arxiv.org/abs/1403.4172} {\path{arXiv:1403.4172}}.

\bibitem[Bir67]{Birkhoff1967}
Garrett Birkhoff.
\newblock {\em Lattice theory}.
\newblock Third edition. American Mathematical Society Colloquium Publications,
  Vol. XXV. American Mathematical Society, Providence, R.I., 1967.

\bibitem[Che91]{Chevalier1991}
G.~Chevalier.
\newblock Around the relative center property in orthomodular lattices.
\newblock {\em Proc. Amer. Math. Soc.}, 112(4):935--948, 1991.
\newblock \href {http://dx.doi.org/10.2307/2048637}
  {\path{doi:10.2307/2048637}}.

\bibitem[CP61]{CliffordPreston1961}
A.~H. Clifford and G.~B. Preston.
\newblock {\em The algebraic theory of semigroups. {V}ol. {I}}.
\newblock Mathematical Surveys, No. 7. American Mathematical Society,
  Providence, R.I., 1961.

\bibitem[Dra78]{Drazin1978}
Michael~P. Drazin.
\newblock Natural structures on semigroups with involution.
\newblock {\em Bull. Amer. Math. Soc.}, 84(1):139--141, 1978.
\newblock \href {http://dx.doi.org/10.1090/S0002-9904-1978-14442-5}
  {\path{doi:10.1090/S0002-9904-1978-14442-5}}.

\bibitem[Fou60]{Foulis1960}
David~J. Foulis.
\newblock Baer {$^{\ast} $}-semigroups.
\newblock {\em Proc. Amer. Math. Soc.}, 11:648--654, 1960.
\newblock \href {http://dx.doi.org/10.2307/2034727}
  {\path{doi:10.2307/2034727}}.

\bibitem[Fou63]{Foulis1963}
D.~J. Foulis.
\newblock Relative inverses in {B}aer *-semigroups.
\newblock {\em Michigan Math. J.}, 10:65--84, 1963.
\newblock \href {http://dx.doi.org/10.1307/mmj/1028998825}
  {\path{doi:10.1307/mmj/1028998825}}.

\bibitem[Har13]{Harding2013}
John Harding.
\newblock Daggers, kernels, {B}aer *-semigroups, and orthomodularity.
\newblock {\em J. Philos. Logic}, 42(3):535--549, 2013.
\newblock \href {http://dx.doi.org/10.1007/s10992-013-9275-5}
  {\path{doi:10.1007/s10992-013-9275-5}}.

\bibitem[Jac85]{Jacobson1985}
Nathan Jacobson.
\newblock {\em Basic algebra. {I}}.
\newblock W. H. Freeman and Company, New York, second edition, 1985.

\bibitem[Kal83]{Kalmbach1983}
Gudrun Kalmbach.
\newblock {\em Orthomodular lattices}, volume~18 of {\em London Mathematical
  Society Monographs}.
\newblock Academic Press Inc. [Harcourt Brace Jovanovich Publishers], London,
  1983.

\bibitem[Kap51]{Kaplansky1951}
Irving Kaplansky.
\newblock Projections in {B}anach algebras.
\newblock {\em Ann. of Math. (2)}, 53:235--249, 1951.
\newblock \href {http://dx.doi.org/10.2307/1969540}
  {\path{doi:10.2307/1969540}}.

\bibitem[Kap55]{Kaplansky1955}
Irving Kaplansky.
\newblock Any orthocomplemented complete modular lattice is a continuous
  geometry.
\newblock {\em Ann. of Math. (2)}, 61:524--541, 1955.
\newblock \href {http://dx.doi.org/10.2307/1969811}
  {\path{doi:10.2307/1969811}}.

\bibitem[Mac64]{MacLaren1964}
M.~Donald MacLaren.
\newblock Atomic orthocomplemented lattices.
\newblock {\em Pacific J. Math.}, 14:597--612, 1964.
\newblock URL: \url{http://projecteuclid.org/euclid.pjm/1103034188}.

\bibitem[MM70]{MaedaMaeda1970}
F.~Maeda and S.~Maeda.
\newblock {\em Theory of symmetric lattices}.
\newblock Die Grundlehren der mathematischen Wissenschaften, Band 173.
  Springer-Verlag, New York, 1970.

\bibitem[Ped79]{Pedersen1979}
Gert~K. Pedersen.
\newblock {\em {$C^{\ast} $}-algebras and their automorphism groups}, volume~14
  of {\em London Mathematical Society Monographs}.
\newblock Academic Press Inc. [Harcourt Brace Jovanovich Publishers], London,
  1979.

\bibitem[vN60]{vonNeumann1960}
John von Neumann.
\newblock {\em Continuous geometry}.
\newblock Foreword by Israel Halperin. Princeton Mathematical Series, No. 25.
  Princeton University Press, Princeton, N.J., 1960.

\end{thebibliography}
\bibliographystyle{alphaurl}

\end{document}